\newtheorem{introthm}{Theorem}
\newtheorem{thm}{Theorem}[section]
\newtheorem{lem}[thm]{Lemma}
\newtheorem{prop}[thm]{Proposition}
\newtheorem{cor}[thm]{Corollary}
\theoremstyle{definition}
\newtheorem{defn}[thm]{Definition}
\newtheorem{ex}{Example}
\newcommand\kh{\operatorname{kh}}
\newcommand{\stacked}[3]{\underset{#3}{\overset{#2}{#1}}}
\theoremstyle{remark}
\newtheorem{rem}[thm]{Remark}
\numberwithin{equation}{section}
\newcommand{\bC}{{\mathbb C}}
\newcommand{\bM}{{\mathbb M}}
\newcommand{\bN}{{\mathbb N}}
\newcommand{\bR}{{\mathbb R}}
\newcommand{\bT}{{\mathbb T}}
\newcommand{\bZ}{{\mathbb Z}}
\newcommand{\cA}{{\mathcal A}}
\newcommand{\cB}{{\mathcal B}}
\newcommand{\cH}{{\mathcal H}}
\newcommand{\cK}{{\mathcal K}}
\newcommand{\cM}{{\mathcal M}}
\newcommand{\cU}{{\mathcal U}}
\DeclareMathOperator{\id}{id}
\DeclareMathOperator{\tr}{tr}
\DeclareMathOperator{\Span}{span}
\DeclareMathOperator{\rank}{rank}
\DeclareMathOperator{\HS}{HS}
\newcommand{\ip}[1]{\langle #1 \rangle}
\begin{document}

\title{Selfless reduced free product $C^*$-algebras}

\author[B. Hayes]{Ben Hayes}
\address{\parbox{\linewidth}{Department of Mathematics, University of Virginia\\
141 Cabell Drive, Kerchof Hall
P.O. Box 400137,
Charlottesville, VA 22904}}
\email{brh5c@virginia.edu}

\author[S. Kunnawalkam Elayavalli]{Srivatsav Kunnawalkam Elayavalli}
\address{\parbox{\linewidth}{Department of Mathematics, University of California, San Diego, \\
9500 Gilman Drive \# 0112, La Jolla, CA 92093}}
\email{skunnawalkamelayaval@ucsd.edu}
\urladdr{https://sites.google.com/view/srivatsavke}

\author[L. Robert]{Leonel Robert}
\address{\parbox{\linewidth}{Department of Mathematics, University of Louisiana at Lafayette, \\
217 Maxim Doucet Hall, 1401 Johnston Street, Lafayette, LA 70503, USA}}
\email{lrobert@louisiana.edu}

\begin{abstract}
   We study selflessness in the general setting of reduced free products of $C^*$-algebras. Towards this end, we develop a suitable theory of rapid decay for filtrations in arbitrary $C^*$-probability spaces. We provide several natural examples and permanence properties of this phenomenon. By using this framework in combination with von Neumann algebraic techniques involving  approximate forms of orthogonality, we are able to prove selflessness for general families of reduced free product $C^*$-algebras. As an instance of our results, we prove selflessness and thus strict comparison for the canonical $C^*$-algebras generated by Voiculescu's free semicircular systems. Our results also provide new examples of purely infinite reduced free products.
   
\end{abstract}

\maketitle%

\section{Introduction}

\subsection{Context and motivation}

Since its introduction by Blackadar in \cite{Blackadar_1989}, the  \emph{strict comparison} property has played a central role in the modern theory of the structure and classification of $C^*$-algebras.
This property can be interpreted as an algebraic order-completeness of the Cuntz semigroup, also known as almost unperforation \cite{Rordam}. When present, strict comparison yields insights into the structure of both the tracial simplex and the $K$-theoretic invariants of a $C^*$-algebra, which are essential to Elliott’s classification program for simple nuclear
$C^*$-algebras, as demonstrated in \cite{GonglinNiu1, GongLinNiu2, uniformgamma, carrión2023classifyinghomomorphismsiunital, winter2017structurenuclearcalgebrasquasidiagonality, white2023abstractclassificationtheoremsamenable}. The influential Toms-Winter conjecture for nuclear $C^*$-algebras further highlights the significance of strict comparison, relating it to other key regularity properties such as $\mathcal{Z}$-stability and nuclear dimension (see, for example, the important work \cite{MatuiSato}).


Studying strict comparison in $C^*$-algebras has by now several demonstrably important applications. 
To illustrate this, let us fix, for convenience,  a simple monotracial $C^*$-algebra $A$ with  strict comparison. We can then explicitly calculate 
the Cuntz semigroup of $A$ (e.g., see \cite{Tensorproductsthiel}). We know also, by a recent theorem of Lin,  that $A$ must have stable rank one \cite{lin2024strictcomparisonstablerank}. Further, by the work \cite{robert2012classification}, we can  fully classify  the  embeddings of the Jiang-Su algebra $\mathcal{Z}$ into $A$ up to approximate unitary equivalence (note also, the upcoming work \cite{unital2} for related results).  More recently, strict comparison led in \cite{tarski} to solve the $C^*$-algebraic Tarski problem, strikingly in the negative. In particular, it was proved there that the $K_1$-group of an ultraproduct of $C^*$-algebras can be computed under a strict comparison hypothesis. 


Despite its natural formulation (analogous to the foundational Murray-von Neumann comparison \cite{Murray-Neumann36}), establishing strict comparison has been challenging, especially outside  the class of nuclear $C^*$-algebras.
By a theorem of R{\o}rdam, $C^*$-algebras that tensorially absorb the Jiang-Su algebra have strict comparison \cite{Rordam}. However, key examples such as $C_\lambda^*(\mathbb F_2)$ fail to be Jiang-Su stable (and are, in fact,  tensorially prime).
Dykema and R{\o}rdam proved strict comparison  for certain infinite free products by using a subtle free probability argument \cite{dykema1998projections, dykema2}. Apart from this and a few other ad-hoc cases (for instance all II$_1$ factors viewed as $C^*$-algebras have strict comparison) the problem of proving strict comparison for naturally occurring examples of non-nuclear $C^*$-algebras, such as $C_\lambda^*(\mathbb F_2)$, remained open for many years. 


In the  recent work \cite{robert2023selfless}, the third-named author introduced the notion of a \emph{selfless} $C^*$-probability space. Selflessness yields several structural properties in a unified manner: simplicity, at most one tracial state, if the state is faithful the 
dichotomy ``purely infinite or stable rank one'', and, significantly, strict comparison. An important breakthrough came  then in \cite{sri2025strictcomparisonreducedgroup}, where it was proved that large classes of reduced group $C^*$-algebras are selfless, including those arising from acylindrically hyperbolic groups with the rapid decay property and trivial finite radical. This settled affirmatively the question of strict comparison for $C_\lambda^*(\mathbb F_2)$.
The strategy developed in \cite{sri2025strictcomparisonreducedgroup}  (notably inspired by the key work \cite{louder2022strongly}) has led to a more recent advancement that obtains selflessness and, therefore, strict comparison and stable rank one, for first examples of reduced group $C^*$-algebras of higher-rank lattices  \cite{vigdorovich2025structuralpropertiesreducedcalgebras} 
(see also \cite{gelanderavni}).

Despite these recent developments, the question of strict comparison remains unexplored for natural families of $C^*$-algebras outside of the reduced group $C^*$-algebra class. An important and rather canonical example here is Voiculescu's free semicircular $C^*$-algebras $\mathcal{S}_n$, emerging from his revolutionary free probability theory \cite{weakconvergence, freerandomvariables} (see also \cite{HTGod, Haageruptraces}).  The free semicircular $C^*$-algebras appear to be a more natural $C^*$-algebraic analogue of the free group factors than the reduced free group $C^*$-algebras, especially for $K$-theoretic reasons \cite{projectionsPicu, KgroupsPicu}. Note that they share the same Elliott invariant as the Jiang-Su algebra $\mathcal{Z}$. These $C^*$-algebras are also of interest because they are a canonical way of constructing a tracial $C^*$-algebra from a real Hilbert space, as Voiculescu shows in \cite{voiculescufreesemi}. For instance, every second countable locally compact group admits a canonical action on a semicircular $C^*$-algebra by applying Voiculescu's functor to the left regular representation on $L^2(G)$ (as a real Hilbert space). Intriguingly, the basic question of whether these $C^*$-algebras are pairwise isomorphic for $n\neq m$ remains open. Notably even the question of strict comparison for these families has remained an open problem. We thank N. Chris Phillips and J. Gabe for communicating this to us. 

\subsection{Main results}
In this article 
we are able to  push the envelope from \cite{sri2025strictcomparisonreducedgroup}, in particular moving away from the class of reduced group $C^*$-algebras. 
We obtain the following theorem, answering affirmatively the open question described above. 

\begin{introthm}\label{main semi thm}
    Voiculescu's free semicircular $C^*$-algebras $\mathcal S_n$ are selfless (in the sense of \cite{robert2023selfless}) for $n\geq 2$. In particular, they have strict comparison. 
\end{introthm}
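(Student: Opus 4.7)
The plan is to realize $\mathcal{S}_n$ as a reduced free product and apply the general selflessness criterion for reduced free products developed earlier in the paper. By definition,
\[
(\mathcal{S}_n,\tau) \;=\; \mathop{*}_{i=1}^{n}\,(C^*(s_i),\tau_i),
\]
where each $s_i$ is a standard semicircular element and each factor $(C^*(s_i),\tau_i)$ is $*$-isomorphic to $C([-2,2])$ endowed with the semicircular probability measure. The task therefore reduces to verifying, for each factor, the hypotheses of the main technical theorem: a rapid-decay filtration, the approximate orthogonality conditions between filtration levels, and faithfulness of the state.

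First I would take the natural filtration $\mathcal{F}_k = \Span\{U_0(s_i/2),\ldots,U_k(s_i/2)\}$, where $U_j$ denotes the Chebyshev polynomial of the second kind. These are the orthogonal polynomials for the semicircular law, so the filtration is orthogonal in $L^2(\tau_i)$ by construction, and the identity $\|U_j\|_{L^\infty([-2,2])}=j+1$ immediately yields a polynomial bound on $\|x\|$ in terms of $\|x\|_2$ for $x \in \mathcal{F}_k$, i.e.\ the rapid-decay estimate expected in this setting. Faithfulness of $\tau_i$ is immediate since the semicircular measure has full support on $[-2,2]$. For the approximate orthogonality condition between free copies — the semicircular analogue of the fact that alternating words of distinct length are orthogonal in a free group — one should compute $L^2$-norms of alternating products of centered Chebyshev elements using the free moment-cumulant formula, expecting quantitative control in the spirit of the Haagerup-type inequality for free semicircular systems.

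The main obstacle, and the heart of the argument, will be matching these concrete Chebyshev estimates against the precise axiomatic framework of rapid decay filtrations in $C^*$-probability spaces that the paper develops. The single-factor estimates are classical, but the joint approximate orthogonality across free copies is delicate because ``disjoint support'' holds only approximately in the semicircular setting and must be controlled quantitatively through free cumulant combinatorics. Once this matching is completed, the main selflessness theorem applies, and strict comparison for $\mathcal{S}_n$ follows as an automatic consequence of selflessness via \cite{robert2023selfless}.
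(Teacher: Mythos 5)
Your overall frame is right: decompose $\mathcal{S}_n$ as $*_{i=1}^n(C([-2,2]),\tau_{sc})$, establish rapid decay of each factor via the Chebyshev polynomials of the second kind (this is exactly Example \ref{ex:sc}), and feed this into a general selflessness criterion for reduced free products; the reduction to the case $n=2$ via permanence of selflessness under free products is also how the paper proceeds. However, the step you flag as ``the heart of the argument'' is misidentified, and this is a genuine gap. The hypotheses of the relevant theorem (Theorem \ref{mainthm3}, i.e.\ Theorem \ref{thm:free prod RDP selfless}) do not ask for any joint approximate orthogonality \emph{across} the free copies verified by free cumulant combinatorics: the interaction between the two free factors is handled once and for all by the Ricard--Xu Khintchine inequality (Theorem \ref{thm: RDP for free products}), which gives rapid decay of the free product from rapid decay of the factors with no Chebyshev-specific computation. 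What the theorem actually requires, beyond rapid decay and separability, is a structural condition on the GNS completion of the second factor \emph{alone}: that it be a $\mathrm{II}_1$-factor or have diffuse central sequence algebra.

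That condition is the crux because the selflessness argument needs a sequence of unitaries $u_{n,k}\in\cU(A_2)$ satisfying asymptotic orthogonality conditions (e.g.\ $\tau(a_1u_{n,k}a_2u_{n,k})\to 0$) in order to define the conjugation maps $\phi_{u_{n,k}}$, and --- as the paper stresses --- the polynomial filtration of $C([-2,2])$ contains no nonconstant unitaries, so these cannot be taken inside the filtration and the Avitzour-type Theorem \ref{mainthm2} is inapplicable here. Your proposal offers no mechanism for producing them. The paper's mechanism is: since the semicircular measure is atomless with full support, the GNS completion of $(C([-2,2]),\tau_{sc})$ is the diffuse abelian von Neumann algebra $L^\infty([-2,2])$, hence has diffuse center and therefore diffuse central sequence algebra; Proposition \ref{prop: dealing with the diffuse center case} then produces the required unitaries by lifting a suitable self-adjoint element from the central sequence algebra, exponentiating, and invoking the Riemann--Lebesgue lemma. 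With that observation the second bullet point of Theorem \ref{mainthm3} is verified and the proof closes; without it, your outline does not reach a proof.
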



Theorem \ref{main semi thm} will be deduced as a special case of our general results later in the introduction (Theorem \ref{mainthm3}). In order to motivate our upcoming results we must first discuss the notion of \emph{rapid decay}. This is a delicate inequality bounding the operator norm by a polynomial distortion of the $2$-norm, for finitely supported elements of the group algebra. It was discovered by Haagerup in his  seminal work \cite{HaagerupGod}, and has since occupied a major place in analytic group theory (see also \cite{JoliRDP, Jolissaint, delaharpe}). Rapid decay has played a crucial role in important problems such as the Baum-Connes and Novikov conjectures \cite{LafforgueBaumConnes}, deep questions in probabilistic group theory \cite{Grigorchuk}, and of course also in the recent work \cite{sri2025strictcomparisonreducedgroup}. In the present work we develop a framework for rapid decay in the general context of \emph{filtrations} of $C^*$-algebras.  By a filtration of a unital $C^*$-algebra
$A$ we understand an increasing sequence of selfadjoint vector subspaces $(V_n)_{n=0}^\infty$
with $V_0=\bC\cdot 1$, $V_mV_{n}\subseteq V_{n+m}$ for all $n,m$, and with dense union in $A$.
A basic reason to work with filtrations is that they are a natural analogue of group-generating sets, allowing us to conveniently implement combinatorial arguments later on. We then formulate the rapid decay property of a $C^*$-probability space $(A,\rho)$ in reference to a filtration of $A$. This framework is crucial for the applications in this paper, and we expect it to be of independent interest.
We develop it in Sections \ref{sec: rapid decay} and \ref{RD free product section}. Note that rapid decay has been considered in the literature in analogous contexts outside the setting of groups, see for instance \cite{TakaRieffel, VergnRD, Brannan_2014, Miyagawa}.

We show that several natural families of $C^*$-algebras admit filtrations with rapid decay. This is discussed in detail in Section \ref{RD filtrations examples}. Interestingly, in several examples, proving rapid decay for filtrations is closely tied to the theory of orthogonal polynomials \cite{SzegoOrthoPolys}.  Here we  list some key examples:
\begin{itemize}
\item $C^*_\lambda(G)$ with the canonical trace, where $G$ is any group with rapid decay in the traditional sense, admitting a finite generating set $S$ with $S=S^{-1}$. See the surveys \cite{sapir2015rapid, ChatterjiRDP} for a variety of examples.
\item The semicircular $C^*$-algebra $A=C([-2,2])$, with $\tau$ being integration against the semicircular distribution, i.e. $\tau(f)=\frac{1}{2\pi}\int_{-2}^{2}f(t)\sqrt{4-t^{2}}\,dt$. More generally, $(C([a,b]),\int\cdot \, d\mu)$ for any atomless probability measure $\mu$ with support $[a,b]$.
    \item $A$ any separable AF algebra and $\rho$ a faithful state (not necessarily assumed to be a trace) on $A$.
    \item $C(G)$ where $G$ is a connected compact Lie group and letting $\tau$ be the state on $C(G)$ induced by the Haar measure.
    \item Let $G$	be a connected compact Lie group and let $H$ be a closed subgroup. Identify $C(G/H)$ with the $C^*$-subalgebra of $C(G)$ of 
right $H$-invariant functions, i.e., such that $f(gh)=f(g)$
for all $g\in G$ and $h\in H$. Then $(C(G/H),\tau|_{C(G/H)})$
has a filtration with rapid decay.
\end{itemize}

Rapid decay is preserved under  certain natural operations such as direct sums, corners, certain tensor products (see Proposition \ref{RDpreservation}), and more crucially for our applications,  reduced free products. To prove the preservation of rapid decay under reduced free products we rely on the 
Khintchine type  inequality of Ricard and Xu \cite{KhinRX}. 
This is done in Section \ref{RD free product section}.



Now recall that the roadmap for proving  selflessness is to produce a diagonally freely independent Haar unitary in the $C^*$-ultrapower. In the group setting, \cite{sri2025strictcomparisonreducedgroup} used a two-pronged approach of combining rapid decay with an effective version of the mixed identity freeness property (see for instance \cite{MIF}). Roughly speaking, this local freeness property is an algebraic condition that enables the choice of a candidate for a $C^*$-free Haar unitary in the $C^{*}$-ultrapower. Effectivizing the growth of lifts of this element and combining it with rapid decay is what allows one to penetrate into the entire $C^*$-algebra via maps that are $L^2$-$L^2$ isometries. In the pursuit of adapting this strategy and pushing it outside of the setting of groups, we prove the following result:

\begin{introthm}\label{mainthm2}
Let $(A_{j},\rho_{j})$, for $j=1,2$,  be $C^{*}$-probability spaces with filtrations $(V_{n,j})_{n=0}^{\infty}$ with the rapid decay property. Suppose there exist unitaries $u\in \bigcup_{n}V_{n,1}$, and $v,w\in \bigcup_{n}V_{n,2}$, such that $v$ lies in  $A_{2}^{\rho_{2}}$ (the centralizer of $\rho_2$), and 
\[
\rho_1(u)=\rho_2(v)=\rho_2(w)=\rho_{2}(v^{*}w)=0. 
\]
Then $(A_{1},\rho_{1})*(A_{2},\rho_{2})$ is selfless.
\end{introthm}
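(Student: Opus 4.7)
The plan is to realize, in a $C^{*}$-ultrapower $B^{\omega}$ of $B=(A_1,\rho_1)*(A_2,\rho_2)$, a Haar unitary that is freely independent from $B$ with respect to the ultrapower state. By the characterization of selflessness from \cite{robert2023selfless} reviewed in the introduction, producing such an element suffices to conclude that $B$ is selfless. I would adapt the strategy of \cite{sri2025strictcomparisonreducedgroup} from the group setting to the present one, using the hypothesized algebraic data as follows: $u\in A_1$ provides a traceless element on the $A_1$-side; the centralizer unitary $v\in A_2^{\rho_2}$ provides a $\rho_2$-preserving shift $\Ad(v)$ together with a traceless element of $A_2$; and $w$, through $\rho_2(v^{*}w)=0$, furnishes a direction in $A_2$ that is fresh relative to the immediate $v$-orbit.

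Concretely, I would construct a sequence $(Y_n)_n\subset B$ of unitaries as long alternating products in $u$, $v$, $w$, designed so that their powers unfold, after telescoping in the reduced free product, as alternating words whose factors are either traceless by construction (via $\rho_1(u)=\rho_2(v)=\rho_2(w)=0$) or whose adjacent products vanish through $\rho_2(v^{*}w)=0$ and the cyclicity $\rho_2(vx)=\rho_2(xv)$. With $Y_n$ in hand, two properties must be verified: (i) the Haar property $\rho(Y_n^k)\to 0$ for $k\in\bZ\setminus\{0\}$; and (ii) asymptotic freeness, namely $\rho(x_1 Y_n^{m_1}\cdots x_r Y_n^{m_r})\to 0$ for traceless $x_1,\ldots,x_r\in B$ and nonzero $m_1,\ldots,m_r\in\bZ$. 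Both reduce, by linearity and density, to computing reduced free product moments of alternating words; the orthogonality data and the centralizer cyclicity provide the required cancellations.

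Rapid decay and Khintchine-type inequalities enter essentially in two ways. First, rapid decay of the filtrations $(V_{n,j})$ furnishes polynomial control of the operator norm by the $L^{2}$-norm, allowing the arbitrary $x_i$ to be approximated by filtration-level elements while keeping the moment estimates uniform. Second, the Ricard-Xu inequality, applied through the free-product rapid-decay theory developed in Section \ref{RD free product section}, bounds the operator norms of the long alternating products arising from $Y_n^{m_i}$ uniformly in $n$. Together these quantitative bounds make the mixed moments in (i) and (ii) arbitrarily small for large $n$, yielding the desired free Haar unitary in the ultrapower.

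The main obstacle, I expect, is the precise construction of $Y_n$. In the group setting of \cite{sri2025strictcomparisonreducedgroup} the word-length metric on the Cayley graph and random-walk intuition guide the choice, but neither is directly available in our $C^{*}$-probabilistic setting, so the filtration must serve as a combinatorial surrogate for length. The most delicate part will be the moment bookkeeping: tracking how long alternating words collapse under the reduced free product state, and verifying that after all reductions the surviving terms either vanish exactly by the $v^{*}w$-orthogonality and $v$-centralizer identities, or are $L^{2}$-small and controlled by the rapid-decay estimate, so as to tend to zero as $n\to\infty$.
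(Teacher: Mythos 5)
Your high-level strategy---combine Avitzour-type cancellations coming from $u,v,w$ with rapid decay and the Ricard--Xu inequality---is the right one, and your proposed $Y_n$ (long alternating words in $u,v,w$; the paper takes $x_n=(wuw)(uv)^n$ and uses it to conjugate a third free copy of $A_1$ rather than to generate a copy of $C(\bT)$) is essentially the unitary the paper uses. But there is a genuine gap in what you propose to verify. You take as your endpoint the two moment conditions (i) $\rho(Y_n^k)\to 0$ and (ii) $\rho(x_1Y_n^{m_1}\cdots x_rY_n^{m_r})\to 0$, and assert that establishing these ``yields the desired free Haar unitary in the ultrapower'' and hence selflessness. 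This is not enough. Selflessness requires an injective $*$-homomorphism from the \emph{reduced} free product $(B,\rho)*(C,\kappa)$ into $B^{\omega}$. Moment convergence only produces a $*$-homomorphism out of the algebraic (equivalently, universal) free product whose composition with $\rho^{\omega}$ is $\rho*\kappa$; this yields the lower bound $\|\theta(a)\|\geq\|a\|_{\mathrm{red}}$ but not the upper bound, so the map need not factor through the reduced free product. The entire point of the rapid decay hypothesis is to establish the missing norm condition $\limsup_n\|\phi_n(a)\|\leq\|a\|$ (the last bullet of Proposition \ref{wk convergence+ strong semi conv}). The moment conditions, by contrast, hold \emph{exactly} for alternating words in the dense subalgebra once $n$ exceeds the word length, by the purely algebraic Avitzour combinatorics (Lemma \ref{lem: Avitzour argument}), and need no quantitative estimates at all. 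You have therefore assigned rapid decay and Ricard--Xu to the part of the argument that does not need them, and omitted the part that does.

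Concretely, what is missing is the mechanism converting the polynomial bound $\|y\|\lesssim(m+1)^{\alpha}\|y\|_{2}$ on the free-product filtration (Theorem \ref{thm: RDP for free products}) into genuine asymptotic contractivity. Since $u,v,w$ lie in the filtrations, one tracks that $\phi_n$ maps the length-at-most-$m$ part of $B_{1}*_{\textnormal{alg}}B_{2}*_{\textnormal{alg}}B_{3}$ into a filtration level of $A_{1}*A_{2}$ of degree $O(mn)$, and---crucially---that $\phi_n$ is an $L^{2}$--$L^{2}$ isometry there (Lemma \ref{item: asy L2 L2 isometry avitzour}). One then applies the rapid decay estimate not to $\phi_n(x)$ but to $\phi_n((x^{*}x)^{r})$, obtaining $\|\phi_n(x)\|^{2r}\leq c\,(mnr)^{\alpha}\|(x^{*}x)^{r}\|_{2}\leq c\,(mnr)^{\alpha}\|x\|^{2r}$, and lets $r\to\infty$ to kill the polynomial factor. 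Without this step (or a substitute for it) your argument stops at bounds of the form $\|\phi_n(a)\|\leq C(n)\|a\|_{2}$ with $C(n)$ growing polynomially, which does not yield the embedding of the reduced free product and hence does not prove selflessness.
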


The conditions on the unitaries above are motivated by, and closely related to, work of Avitzour \cite{avitzour1982free}.
Note that a finite-dimensional $C^{*}$-probability space $(A,\rho)$ with faithful state $\rho$ always has the rapid decay property relative to the filtration $V_{n}=A$ for all $n$.  
We thus deduce from Theorem \ref{mainthm2} that 
\[
(\mathbb{M}_m(\mathbb{C}),\rho_1)*(\mathbb{M}_n(\mathbb{C}),\mathrm{tr})
\]
is selfless whenever $m,n\geq 2$ and $\rho_{1}$ is faithful.
If $\rho_1$ is additionally nontracial, then 
this reduced free product is purely infinite. 
We note that Theorem \ref{mainthm2} (and Theorem \ref{mainthm3} below), provide relatively simple criteria for proving that a reduced free product is purely infinite (by establishing selflesness relative to a faithful nontracial state). 
Previous results (for example, \cite{dykemapiII})  established pure infiniteness of reduced free products  in certain cases, under more complicated hypotheses.


Other examples of selfless reduced free products can be obtained from Theorem \ref{mainthm2}, e.g.,  $(A,\tau)*(\bM_{n}(\bC),\tr)$ if $n\geq 2$, $A$ is finite-dimensional, and $\tau$ is a faithful trace with $\tau(z)<1/2$ for every central projection in $A$. 



The assumption in Theorem \ref{mainthm2} that the filtrations of $A_1$ and $A_2$ contain unitaries with state zero is restrictive, since filtrations may even fail to have any nonconstant unitaries. Indeed, note that the filtration by polynomials on $C([a,b])$ does not have nonconstant unitaries. In particular, one \emph{cannot} derive Theorem \ref{main semi thm} from Theorem \ref{mainthm2} and therefore this application necessitates new ideas. In our upcoming general result, we are able to significantly expand and abstract the strategy behind Theorem \ref{mainthm2}. We still assume that $A_{1},A_2$ have filtrations with the rapid decay property. But our additional assumptions apply only to the structure of the GNS completion of $A_{2}$ with respect to $\tau$:

\begin{introthm}\label{mainthm3}
    Let $(A_{1},\rho),(A_{2},\tau)$ be unital $C^{*}$-probability spaces with $\tau$ tracial, 
$\dim_{\bC}(A_{1})>1$, 
and $A_{2}$ separable. Suppose that $A_{1},A_{2}$ have filtrations with the rapid decay property. Suppose that either:
\begin{itemize}
    \item the GNS completion of $A_{2}$ with respect to $\tau$ is a $\textrm{II}_{1}$-factor, or
    \item the GNS completion of $A_{2}$ with respect to $\tau$ has diffuse central sequence algebra (e.g. if the GNS completion of $A_{2}$ has diffuse center). 
\end{itemize}
Then $A_{1}*A_{2}$ is selfless.
\end{introthm}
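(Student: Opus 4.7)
The plan is to adapt the strategy behind Theorem \ref{mainthm2}: produce a Haar unitary $h$ in the $C^{*}$-ultrapower of $A_{1}*A_{2}$ that is freely independent from $A_{1}*A_{2}$ with respect to the reduced free product state $\rho*\tau$. Since the Avitzour-type ingredients $v,w\in A_{2}$ of Theorem \ref{mainthm2} are not assumed to exist in the filtration of $A_{2}$, the new ingredient is to manufacture appropriate analogues of $v,w$ inside the tracial von Neumann ultrapower $M_{2}^{\mathcal U}$ of $M_{2}:=\pi_{\tau}(A_{2})''$, and then lift them to the $C^{*}$-ultrapower $A_{2}^{\mathcal U}$ via rapid decay. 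The role of $u\in A_{1}$ is played by an element $s\in A_{1}$ with $\rho(s)=0$, which exists since $\dim_{\bC}(A_{1})>1$ and $\rho$ is faithful.

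The first step is to find two Haar unitaries $v,w\in M_{2}^{\mathcal U}$ with $\tau^{\mathcal U}(v)=\tau^{\mathcal U}(w)=\tau^{\mathcal U}(v^{*}w)=0$ and an appropriate independence property from $M_{2}$. In case (i), when $M_{2}$ is a $\mathrm{II}_{1}$-factor, such a pair freely independent from $M_{2}$ exists by standard constructions of free Haar unitaries in $\mathrm{II}_{1}$-factor ultrapowers (e.g.\ generic unitary conjugation of a Haar unitary sitting in a diffuse subalgebra, together with a Voiculescu-type asymptotic freeness argument). In case (ii), take $v,w$ to be orthogonal Haar unitaries inside the diffuse central sequence algebra $M_{2}'\cap M_{2}^{\mathcal U}$, so they commute with $M_{2}$. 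Although these two cases yield $v,w$ with logically different relations to $M_{2}$, both suffice for the downstream moment computations that establish freeness of the composite unitary $h$ from $A_{1}*A_{2}$.

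The second step is to lift $v,w$ from $M_{2}^{\mathcal U}$ to the $C^{*}$-ultrapower $A_{2}^{\mathcal U}$ with bounded operator norm. Density of $\bigcup_{n}V_{n,2}$ in $L^{2}(M_{2},\tau)$ allows one to approximate representatives of $v,w$ in $L^{2}$ by filtration elements; rapid decay, i.e., $\|x\|\leq p(n)\|x\|_{2}$ for $x\in V_{n,2}$ with some polynomial $p$, then converts a sufficiently good $L^{2}$-approximation at an appropriately chosen filtration level into an operator-norm bound. A diagonal construction yields bounded lifts in $A_{2}^{\mathcal U}$ whose images in $M_{2}^{\mathcal U}$ recover $v,w$ while preserving the vanishing moments as well as the freeness/commutation relations with $M_{2}$.

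The final step assembles the candidate $h\in (A_{1}*A_{2})^{\mathcal U}$ as an alternating reduced word of Avitzour type in $s,v,w$ (e.g.\ $h=svs^{*}w$, after a polar decomposition to make $s$ approximately unitary) and verifies that $h$ is Haar unitary and freely independent from $A_{1}*A_{2}$ with respect to $(\rho*\tau)^{\mathcal U}$. This is a moment computation: mixed $*$-moments of $h$ with elements of the joint free-product filtration on $A_{1}*A_{2}$ (from Section \ref{RD free product section}) reduce, via the $L^{2}$-vanishing conditions on $v,w$ and their freeness/commutation with $M_{2}$, to standard Voiculescu free-product expressions, which vanish on reduced words of the required form. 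Rapid decay on the joint filtration then lifts these $L^{2}$-level identities to the operator-norm/state-level identities that witness selflessness in the sense of \cite{robert2023selfless}. The main obstacle will be the lifting step: balancing the filtration degree against the $L^{2}$-approximation rate so that the resulting lifts retain all the required moment, freeness and commutation properties in the $C^{*}$-ultrapower. This balancing is precisely what the rapid decay framework developed in Sections \ref{sec: rapid decay} and \ref{RD free product section} is designed to enable.
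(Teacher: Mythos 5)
Your high-level plan (manufacture the missing Avitzour ingredients in the tracial ultrapower of $\pi_\tau(A_2)''$ and push them back into the $C^*$-level) is in the right spirit, but three of its load-bearing steps fail as stated. First, the lifting mechanism is wrong: rapid decay gives $\|x\|\leq p(n)\|x\|_2$ only for $x\in V_{n,2}$, so approximating a Haar unitary $v\in M_2^{\mathcal U}$ in $L^2$ by filtration elements produces lifts whose operator norms blow up polynomially (and which are in any case not unitaries --- recall the polynomial filtration of $C([-2,2])$ contains no nonconstant unitaries at all, which is exactly the obstruction this theorem is designed to overcome). The paper instead lifts the self-adjoint \emph{logarithm} $h$ of the Haar unitary to a uniformly bounded self-adjoint sequence in $A_2$ via \cite[Theorem 3.3]{KRCentralsequence} and exponentiates, so that the $u_k=e^{ih_k}$ are genuine unitaries \emph{in $A_2$}, not merely in an ultrapower. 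Second, your Avitzour word $h=svs^*w$ requires a state-zero (approximate) unitary coming from $A_1$, but under the hypothesis $\dim_{\bC}(A_1)>1$ no such unitary need exist (e.g.\ $A_1=\bC^2$ with an asymmetric faithful state), and polar decomposition does not repair this: the polar part of a state-zero element need not be unitary nor have state zero. The paper's construction avoids $A_1$-unitaries entirely by instead conjugating a \emph{second copy of $A_1$} by the unitaries $u_{n,k}\in\cU(A_2)$, i.e.\ by the maps $\phi_{u_{n,k}}\colon A_1*_{\mathrm{alg}}A_2*_{\mathrm{alg}}A_1\to A_1*A_2$.

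Third, and most substantively, producing a Haar unitary free from $A_1*A_2$ \emph{in distribution} is not yet selflessness: one must show the induced map on the algebraic free product is contractive for the reduced free product operator norm. Your appeal to ``rapid decay on the joint filtration'' cannot close this gap, because the lifted unitaries do not lie in any filtration level of $A_1*A_2$, so the free-product rapid decay of Section \ref{RD free product section} does not apply to words containing them. This is precisely where the paper's new idea enters: the ``inflated rapid decay'' of Theorem \ref{thm: selfless from asy angle conditions}, which uses the almost-orthogonality of $V_{n,2}u_{n,k}$, $u_{n,k}^*V_{n,2}$ and $\widehat F_{n,k}$ to show that the spaces $F_{n,k}=(\id-\tau)(V_{n,2}u_{n,k}+u_{n,k}^*V_{n,2}+\widehat F_{n,k})$ still satisfy an $L^2$-to-operator-norm bound of the same polynomial order, after which Lemma \ref{lem: RDP estimate on layers for free prod} and the power trick yield $\limsup_k\|\phi_{n,k}(d)\|\leq\|d\|$. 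Relatedly, your claim that the factorial case (Popa-free unitaries) and the diffuse-central-sequence case (commuting unitaries) ``both suffice for the downstream moment computations'' is unsubstantiated; the paper treats them with genuinely different choices of $\widehat F_{n,k}$ ($u_{n,k}^*(V_{n,2}\ominus\bC1)u_{n,k}+V_{n,2}\ominus\bC1$ versus $V_{n,2}\ominus\bC1$) because the verification of inflated rapid decay is different in each case.
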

The power of this theorem stems from the fact that very mild restrictions are imposed on $A_1$,  while the two bullet point conditions  on $A_2$ are not particularly restrictive.
We show that having a filtration with the rapid decay property implies faithful GNS representation (see Proposition \ref{RDfaithfulGNS}), and so the first bullet point is equivalent to  $\dim(A_{2})=+\infty$, and  $\tau$ being an extreme point of the tracial Choquet simplex of $A_2$. This is the case for example
when $A_2=C^*_\lambda(G)$ and $G$ is an icc (infinite conjugacy class) group.

The second bullet point holds whenever the GNS completion  of $A_{2}$ with respect to $\tau$ has diffuse center. Thus, we may apply Theorem \ref{mainthm3} when $A_{2}=C(X)$ and $\tau=\int \cdot \,d\mu$ where $\mu$ is atomless, provided that we have filtrations of $A_{1},A_{2}$ with the rapid decay property.  This 
allows us to deduce that the $C^{*}$-algebra of $n$ free semicirculars is selfless (Theorem \ref{main semi thm}) once $n\geq 2$.
As explained before, this case is not directly approachable via Theorem \ref{mainthm2}, since our filtrations for the semicircular measure do not have nonconstant unitaries. We then deduce from Theorem \ref{main semi thm}, that the free Araki--Woods C*-algebras $\Gamma(H_\bR,U_t)$ introduced by Shylakhtenko
 are selfless, simple, and purely infinite, for $\dim H_\bR\geq 3$.

 Yet another case when the second bullet point assumption in Theorem \ref{mainthm3} holds is when $A_2$ is a tensor product $B\otimes C$, where $C$ is unital, simple, separable,  nuclear, and  with unique trace. This is the case, for example, when $A_{2}$ is $\mathcal{Z}$-stable.

Since we do not require that $A_{1}$ has unitaries with state zero, we can use  Theorem \ref{mainthm3} to show that 
\[
(\bC^{2},\rho)*(C^{*}_{\lambda}(G),\tau)
\]
is selfless, when $\rho$ is any faithful state, $G$ is any icc group with the rapid decay property, and $\tau$ is the usual trace from the left regular representation. This does not follow from Theorem \ref{mainthm2}.
Note however that Theorem \ref{mainthm2} does imply that free products such as  
\[
(\bM_{k}(\bC),\mathrm{tr})*(C^{*}_\lambda(G),\tau),\]  with $k\geq 2$ and $G$ a finite nontrivial group, are selfless. These cases do not directly follow Theorem \ref{mainthm3}, as the assumptions of that theorem imply that one of the sides of the free product has diffuse GNS completion. The two theorems are thus complementary in some sense.


Using both cases of Theorem \ref{mainthm3},
we completely classify selflessness among free products of finite dimensional abelian $C^*$-algebras: 

\begin{introthm}\label{abelianfindimmaintheorem}
Let	$A$ and $B$ be finite-dimensional abelian $\mathrm{C}^*$-algebras endowed with faithful states $\tau_A$ and $\tau_B$. Then
$(A,\tau_A)\ast(B,\tau_B)$ 
is selfless if and only if  $\dim(A)+\dim(B)\ge 5$ and  
whenever $p$ is a minimal projection in $A$, and $q$ a minimal projection in $B$, we have $\tau_A(p) + \tau_B(q) < 1$.
\end{introthm}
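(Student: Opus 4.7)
The proof splits into ``only if'' and ``if'' directions. For the only-if direction, I would handle two obstructions. If $\dim(A)+\dim(B)\le 4$, then either one side equals $\mathbb{C}$ (so the free product is the other finite-dimensional abelian algebra, hence not simple) or both sides equal $\mathbb{C}^2$; in the latter case $(\mathbb{C}^2,\tau_A)\ast(\mathbb{C}^2,\tau_B)$ is a well-studied subhomogeneous $C^*$-algebra (related to $C^*_r(\mathbb{Z}/2\ast\mathbb{Z}/2)=C^*_r(D_\infty)$) whose GNS completion has nontrivial center, so it is not selfless. Second, if some minimal projections $p\in A,\ q\in B$ satisfy $\tau_A(p)+\tau_B(q)\ge 1$, then by the classical obstruction of Avitzour~\cite{avitzour1982free} and Dykema~\cite{dykema1998projections, dykema2}, the reduced free product carries a proper nonzero ideal, so it is not simple and hence not selfless.

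For the if-direction, I would write $A=\mathbb{C}^m$, $B=\mathbb{C}^n$ with $m,n\ge 2$ and $m+n\ge 5$. Finite-dimensionality gives the trivial filtration $V_k=A_i$ the rapid decay property, so the filtration hypotheses of Theorems \ref{mainthm2} and \ref{mainthm3} are automatic and I need only produce the requisite unitary or free-product data. Let $a,b$ denote the maximal weights on $A,B$ respectively; since $a+b<1$ we have $\min(a,b)<\tfrac12$, and by the Minkowski-sum criterion ($0$ lies in $\sum_i\lambda_i\mathbb{T}$ iff $\max_i\lambda_i\le\tfrac12$) at least one side admits a state-zero unitary.

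I would then split into two subcases. In the generic subcase $\max(m,n)\ge 4$ with that side's maximum weight $\le\tfrac12$, say $n\ge 4$ and $b\le\tfrac12$, I would invoke Theorem \ref{mainthm2}: state-zero unitaries $v,w\in B$ with $v^*w$ also state-zero exist by exploiting the higher-dimensional solution space (for $n=4$ with equal weights, the explicit choice $v=(1,i,-1,-i),\ w=(1,-1,1,-1)$ works; for general $n\ge 4$ configurations with $\max\le\tfrac12$ the $(2n-6)$-dimensional solution manifold is nonempty by a homotopy/implicit-function argument), combined with a state-zero unitary in $A$. For the residual low-dimensional subcase $(m,n)\in\{(2,3),(3,2),(3,3)\}$, or weight configurations where one side's maximum exceeds $\tfrac12$, Theorem \ref{mainthm2} is unavailable: the triangle parametrization of state-zero unitaries in $\mathbb{C}^3$ shows there are only finitely many modulo global phase, generically incompatible with $v^*w$ also being state-zero. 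Here I would instead invoke Theorem \ref{mainthm3}, leveraging that under $a+b<1$, Dykema's computation of the von Neumann completion of $(A\ast B,\tau_A\ast\tau_B)$ yields an interpolated free group factor $L(F_t)$, in particular a $\mathrm{II}_1$-factor with diffuse central sequence algebra. A Dykema-style free-compression or associativity maneuver re-expresses $A\ast B$ as a reduced free product $(A_1',\rho')\ast(A_2',\tau')$ with $A_2'$ infinite-dimensional and satisfying one of the two bullets of Theorem \ref{mainthm3}, yielding selflessness.

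The chief difficulty lies precisely in this last step: producing the alternative free-product decomposition of $A\ast B$ with an infinite-dimensional second factor whose GNS meets one of Theorem \ref{mainthm3}'s bullet conditions. This is where both cases of Theorem \ref{mainthm3} enter—different weight configurations trigger the $\mathrm{II}_1$-factor bullet versus the diffuse-central-sequence bullet—and the argument would combine classical Dykema-style GNS computations for finite-dimensional free products with the new rapid-decay framework for filtrations developed earlier in the paper.
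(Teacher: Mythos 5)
Your necessity argument and your overall strategy for sufficiency (reduce to Theorems \ref{mainthm2} and \ref{mainthm3}) are on the right track, but there is a genuine gap exactly where you flag ``the chief difficulty'': you never actually produce the alternative free product decomposition with an infinite-dimensional factor satisfying one of the bullet hypotheses of Theorem \ref{mainthm3}, and without it the sufficiency direction is incomplete in precisely those cases (e.g.\ $\bC^3*\bC^2$, or any configuration where one side lacks the unitaries required by the Avitzour condition) where Theorem \ref{mainthm2} is unavailable. The paper carries this step out concretely: writing $A=\bC^m$, $B=\bC^n$ with $m\ge 3$, it compresses by $p=p_1+p_2$ (a sum of two minimal projections of $A$) and invokes Dykema's \cite[Proposition 2.8]{Dykemasimplicitystablerankonefree} to obtain $pCp\cong \bC^2 * pB_1p$, where $B_1$ is the free product in which $p_1,p_2$ have been merged into a single summand. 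Dykema's structure results (\cite[Theorem 1, Proposition 2.7]{Dykemasimplicitystablerankonefree} together with \cite{dykemafreehyper}) then identify $pB_1p$ as an infinite-dimensional corner plus a finite-dimensional abelian summand, with GNS completion an interpolated free group factor when $m+n>5$, respectively of the form $C[a,b]\oplus\bC^{k'}$ with diffuse abelian GNS completion when $m+n=5$; a second application of the same compression trick (Lemma \ref{sumsfreeprod}) strips the finite-dimensional summand and lands in the hypotheses of Theorem \ref{thm:free prod RDP selfless}. One must also verify rapid decay of the corner (Proposition \ref{RDpreservation}) and transport selflessness back through the stable isomorphism $pCp\sim C$ via \cite[Theorems 4.3, 4.4]{robert2023selfless}; neither point appears in your sketch.

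Two smaller inaccuracies: an interpolated free group factor is full, so it does \emph{not} have diffuse central sequence algebra --- it is the $\mathrm{II}_1$-factor bullet of Theorem \ref{mainthm3} that applies in that case; and knowing that the von Neumann completion of $A*B$ itself is essentially $L(F_t)$ is not what Theorem \ref{mainthm3} asks for, since its hypothesis concerns one of the two \emph{factors} of a free product decomposition --- which is exactly why the compression step cannot be avoided. Your appeal to Theorem \ref{mainthm2} in the ``generic subcase'' is a genuinely different (and dispensable) route: once the compression argument is in place it handles all admissible weight configurations uniformly, so the case division on the existence of state-zero unitaries can be dropped.
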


Besides Theorem \ref{mainthm3},
to prove Theorem \ref{abelianfindimmaintheorem} we rely on Dykema's structure result on  the reduced free products of
finite-dimensional abelian C*-algebras from \cite{Dykemasimplicitystablerankonefree}, as well as 
on Dykema's classification of von Neumann free products of finite-dimensional abelian von Neumann algebras
\cite{dykemafreehyper}.
    We mention that if $(M_{1},\rho_1)$ and $(M_{2},\rho_2)$  are von Neumann algebras equipped with faithful, normal, states, then \cite[Theorem 3]{DykemaFreeProdvNA} determines precisely when their von Neumann free product is a type \textrm{III} factor (see \cite[Theorem 3]{UedaFreeProdVNA} for an alternative approach).
    Furthermore, a rather surprising result of Hartglass-Nelson \cite{HNFAW} precisely classifies the free products of finite-dimensional von Neumann algebras in terms of the Free Araki--Woods factors of Shlyakhtenko \cite{Shlyakhtenko}. It is plausible that these results  can be used to investigate selflessness of the reduced free product of two finite-dimensional $C^{*}$-probability spaces. However, adapting our approach 
   to this setting poses substantial challenges. We leave this question as an interesting problem for future research. 
   

\subsection{Roadmap of the proofs}

We encourage the reader to consult this subsection in parallel with the proofs in Sections \ref{Avitzour section}, \ref{sec: here is where the fun begins} and \ref{fun continues}. 

The proof of Theorem \ref{mainthm2} is covered in Section \ref{Avitzour section}. We recommend the reader to read through this proof first, as it provides a good starter as far as the intuition for the later parts of the paper is concerned. The strategy is heavily inspired by the overall approach of \cite{sri2025strictcomparisonreducedgroup}, and also the work of Avitzour in \cite{avitzour1982free}. The availability of state zero unitaries \emph{inside} the filtrations allows for an elegant transport of the  ideas of \cite{sri2025strictcomparisonreducedgroup}, where the rapid decay property is combined with $L^2$-$L^2$ isometries induced by a natural sequence of *-homomorphisms (similar to Proposition 3.1 of \cite{sri2025strictcomparisonreducedgroup}). 

Let us briefly expand on the arguments. 
Let $(A_{j},\rho_{j}),j=1,2$ be $C^{*}$-probability spaces. For a unitary $v\in \mathcal U(A_1*A_2)$, and letting
$B_{1}=A_{1}$, $B_{2}=A_{2}$, $B_3=A_1$, denote by 
\[
\phi_{v}\colon B_{1}*_{\textnormal{alg}}B_{2}*_{\textnormal{alg}}B_{3}\to A_{1}*A_{2}
\]
the unique *-homomorphism that is the identity on
$B_1,B_2$ and conjugation by $v$ on $B_3$.
Now  let us assume the \emph{Avitzour condition}, i.e., that there exist $u\in \cU(A_{1})$ and $v,w\in \cU(A_{2})$  with $v\in A_{2}^{\rho_{2}}$ and $\rho_{2}(v^{*}w)=0=\rho_{2}(v)=\rho_{2}(w)=\rho_{1}(u)$. For each $n\in \bN$, let
$\phi_n = \phi_{x_n^*}$, 
where $x_{n}=(wuw)(uv)^{n}$. The Avitzour condition allows us to show  that $\phi_n$ is an  $L^2$-$L^2$ isometry on the subspace  of $B_{1}*_{\textnormal{alg}}B_{2}*_{\textnormal{alg}}B_{3}$ spanned by alternating centered words of length less than $n$ (see Lemmas \ref{lem: Avitzour argument} and \ref{item: asy L2 L2 isometry avitzour}). If we assume further that $u,v,w$ belong to  filtrations 
of $A_1$ and $A_2$ with the rapid decay property, then we can make use of the rapid decay property in $A_1*A_2$ to show that the maps $\phi_n$ are asymptotically contractive. This in turn yields the desired *-homomorphism from $B_1*B_2*B_3$ into $(A_1*A_2)^{\omega}$ encoded in the selfless  property.

We now discuss Theorem \ref{mainthm3}. The method of proof here again involves maps $\phi_n$ of the form 
$\phi_{v_n}$, arising from unitaries $v_n$, as above. In the process of adapting the proof strategy for Theorem \ref{mainthm2} to more general free products, we no longer wish to choose these unitaries inside the filtrations of $A_1$ and $A_2$. Instead, we will choose  $v_n\in \mathcal U(A_2)$. In order to get the $L^2$-$L^2$ isometry argument to go  through,
we choose  $(v_n)_n$ satisfying suitable asymptotic orthogonality conditions. This is accomplished in Lemma \ref{lem: asy L2-L2 isometry}. Note that the 
images under $\phi_n$ of the 
subspaces from the filtration of $B_{1}*_{\textnormal{alg}}B_{2}*_{\textnormal{alg}}B_{3}$ 
are no longer contained in subspaces from the filtration of $A_1*A_2$.
Thus, the new arguments need to ensure that these images are contained in subspaces of $A_1*A_2$ where the operator norm is still controlled by the $L^{2}$-norm, i.e., a form of ``inflated rapid decay''. This step is more subtle. We discuss next in more detail the challenges that it entails.  

\subsection*{The rapid decay side} 
Let $E\subseteq A_{2}$ be a vector subspace of the filtration of $A_2$. Suppose that $v\in \mathcal U(A_2)$ is such
that the spaces $vE$ and $E$ are almost orthogonal. By combinatorial considerations (Lemma \ref{lem: combinatorial strucutre of conjugation map}), we are led to consider 
\[F=(\id-\tau)(Ev+v^{*}E+\widehat{F}),\]
where $\widehat{F}$ almost contains (in operator norm) $E$ and $v^{*}Ev$.
By assumption, we have some $C>0$ satisfying $\|x\|\leq C\|x\|_{2}$ for all $x\in E$.  
In order  to maintain good enough rapid decay-style estimates to deduce selflessness we would like to show that we can choose $v$ to additionally satisfy
\[\|y\|\leq \alpha C\|y\|_{2}\]
for some uniform $\alpha>0$ and all $y\in F$.  

Given $x_{1},x_{2}\in E$, $x_{3}\in \widehat{F}$ we have that
\[\|x_{1}v+v^{*}x_{2}+x_{3}\|\leq \|x_{1}\|+\|x_{2}\|+\|x_{3}\|.\]
The first two terms on the right-hand side can be estimated by $C(\|x_{1}\|_{2}+\|x_{2}\|)$. Moreover, from the assumption of almost orthogonality between 
$vE$ and $E$  we see that $\|x_{1}v+v^{*}x_{2}+x_{3}\|_{2}^{2}$ is close to
\[\|x_{1}\|_{2}^{2}+\|x_{2}\|_{2}^{2}+\|x_{3}\|_{2}^{2}.\]
We will address comparing the operator and $L^{2}$-norms of $x_{3}$ later, as we  can only handle this by splitting into cases where the GNS completion either has diffuse central sequence algebra or is a $\textrm{II}_{1}$-factor. For now, let us simply assume that we have appropriate control on the comparison of these norms. 

Combining the above, we see that 
\[\|x_{1}\|+\|x_{2}\|+\|x_{3}\|\leq  C'\sum_{j=1}^{3}\|x_{j}\|_{2}^{2},\]
and the right-hand side above is approximately $C''\|x_{1}v+v^{*}x_{2}+x_{3}\|_{2}^{2}$. 

\subsection*{The general technical approach} We obtain a general blueprint theorem precisely as follows. This is stated in quite technical terms, so we recommend the reader to consult this subsection when reading in parallel with Sections \ref{sec: here is where the fun begins}, \ref{fun continues}.

\begin{introthm}[Theorem \ref{thm: selfless from asy angle conditions}]\label{thmE}
Let $(A_{1},\rho),(A_{2},\tau)$ be unital $C^{*}$-probability spaces with $\tau$ tracial. Suppose that $A_{1}$ and $A_2$ have the rapid decay property
relative to filtrations $(V_{n,1})_{n=0}^{\infty}$ and $(V_{n,2})_{n=0}^{\infty}$.

Suppose that for each $n\in \bN$ there exist  a sequence of unitaries  $(u_{n,k})_k\in \cU(A_{2})$, and  self-adjoint linear subspaces $(\widehat{F}_{n,k})_k\subseteq A_{2}$, with $1\in \widehat{F}_{n,k}$, which satisfy:
\begin{itemize}
\item (almost orthogonality) 
\begin{align*}
\sup_{a_{1},a_{2}\in V_{n,2}:\|a_{1}\|_{2},\|a_{2}\|_{2}\leq 1}|\tau(a_{1}u_{n,k}a_{2}u_{n,k})|&\to_{k\to\infty}0,
\\
\sup_{a_{1}\in V_{n,2},a_{2}\in \widehat F_{n,k}:\|a_{1}\|_{2},\|a_{2}\|_{2}\leq 1 }|\tau(a_{1}u_{n,k}a_{2})|&\to_{k\to\infty} 0,
\end{align*}

\item (inflated rapid decay) there are constants $b,\beta>0$ (independent of $n$) such that 
\[
\|x\|\leq b(n+1)^{\beta}\|x\|_{2}
\]
for all $x\in \widehat{F}_{n,k}$ and all $k$.

\item (asymptotic containment), for all $y\in V_{n,2}\ominus \bC 1$ we have 
    \[\lim_{k\to\infty}\max(\inf_{c\in \widehat{F}_{n,k}}\|y-c\|,\inf_{c\in \widehat{F}_{n,k}}\|u_{n,k}^{*}yu_{n,k}-c\|)=0. \]
\end{itemize}
Then $A_{1}*A_{2}$ is selfless. 
\end{introthm}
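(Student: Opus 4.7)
The plan is to construct, by diagonal extraction, a unital $*$-homomorphism
\[
\Pi\colon B_1*B_2*B_3\to (A_1*A_2)^\omega,
\]
where $B_1=B_3=A_1$ and $B_2=A_2$ are equipped with the natural states $\rho,\tau,\rho$, such that $\Pi$ is the identity on the diagonal copy $B_1*B_2\cong A_1*A_2$; as in the proof scheme of Theorem~\ref{mainthm2} this is the structure that encodes the required selflessness property for $A_1*A_2$. For each $n$, use the three asymptotic bullets together with separability of $A_2$ to choose $k_n$ so large that $v_n:=u_{n,k_n}$ and $\widehat F_n:=\widehat F_{n,k_n}$ satisfy all three conditions with error at most $1/n$ on a fixed countable dense family in $V_{n,2}\ominus\mathbb C 1$. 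Define $\Pi_n\colon B_1*_{\mathrm{alg}}B_2*_{\mathrm{alg}}B_3\to A_1*A_2$ to be the identity on $B_1,B_2$ and $b\mapsto v_n b v_n^*$ on $B_3$, and take $\Pi$ to be the ultrapower of $(\Pi_n)_n$.

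Two complementary statements must then be proved. The first is an $L^2$-asymptotic isometry on the algebraic free product: for alternating centered words $w,w'$ with syllables from the filtrations at level $\leq N$,
\[
(\rho*\tau)\bigl(\Pi_n(w^*w')\bigr)\xrightarrow[n\to\infty]{}\langle w',w\rangle_{B_1*B_2*B_3}.
\]
The combinatorial expansion groups consecutive $A_2$-syllables with the intervening $v_n,v_n^*$ into blocks in $V_{N,2}v_n$, $v_n^*V_{N,2}$, and $v_n^*V_{N,2}v_n$; the almost-orthogonality bullets (which in particular yield $|\tau(av_n)|\to 0$ for $a\in V_{n,2}$ upon plugging $a_2=1\in\widehat F_{n,k}$ into the second estimate) show these blocks are asymptotically centered in $A_2$, so Voiculescu's formula for the reduced free product $A_1*A_2$ collapses everything to the non-crossing pairing contributions, which reassemble to the reduced inner product in $B_1*B_2*B_3$.

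The second statement is a uniform-in-$n$ operator-norm estimate $\|\Pi_n(w)\|\leq M_N\|w\|_{B_1*B_2*B_3}$. Following the sketch in the introduction under ``the rapid decay side,'' the $A_2$-blocks in $\Pi_n(V_N)$ all lie in the subspace
\[
F_n=(\id-\tau)\bigl(V_{N,2}v_n+v_n^*V_{N,2}+\widehat F_n\bigr)\subseteq A_2,
\]
and one proves $\|y\|\leq\alpha_N\|y\|_2$ for all $y\in F_n$ with $\alpha_N$ independent of $n$. The three summands are $L^2$-almost orthogonal by the almost-orthogonality hypothesis, so their $L^2$-norms add squarely; ordinary rapid decay in $A_2$ controls the $V_{N,2}$-pieces, the inflated rapid decay (with constants $b,\beta$) controls the $\widehat F_n$-piece, and the asymptotic containment absorbs the $v_n^*V_{N,2}v_n$ terms arising in the combinatorial expansion (cf.\ Lemma~\ref{lem: combinatorial strucutre of conjugation map}). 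Applying rapid decay for $A_1*A_2$ from Section~\ref{RD free product section} to the alternating block structure then yields the uniform bound $M_N$, which, combined with the moment matching of the first step, lets the ultrapower map extend to the reduced free product and gives the required $\Pi$.

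The main obstacle is precisely this uniformity in $n$ of the inflated rapid decay on $F_n$. A direct application of the $(n+1)^\beta$-bound to the whole image of $\Pi_n$ would let that polynomial factor compound with the rapid-decay factor from $A_1*A_2$ across the combinatorial expansion, producing a blow-up in $n$ that is fatal to the ultrapower argument. The argument succeeds only because the three summands in $F_n$ are $L^2$-almost orthogonal, so the $(n+1)^\beta$ factor is applied only to the $\widehat F_n$-component — whose $L^2$-norm is in turn dominated by $\|y\|_2$ itself — rather than being squared against the outer rapid-decay factor. Rigorously interlocking the three hypotheses (almost orthogonality, inflated rapid decay, asymptotic containment) while simultaneously tracking the operator-norm approximations implicit in the asymptotic containment is the technical heart of the proof, and is carried out in Sections~\ref{sec: here is where the fun begins} and~\ref{fun continues}.
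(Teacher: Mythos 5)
Your proposal follows essentially the same route as the paper: the conjugation maps $\phi_{v_n}$ on $B_1*_{\textnormal{alg}}B_2*_{\textnormal{alg}}B_3$, the asymptotic moment-matching, the decomposition $F_n=(\id-\tau)(V_{N,2}v_n+v_n^*V_{N,2}+\widehat F_n)$ with the almost-orthogonality hypothesis forcing the three summands to be nearly $L^2$-orthogonal so that the inflated rapid decay constant for $F_n$ is uniform in the index $k$, and the Ricard--Xu machinery of Section \ref{RD free product section} (via Lemma \ref{lem: RDP estimate on layers for free prod}) to control alternating words in $(A_1\ominus\bC 1,F_n)$. All of that matches the paper's Lemmas \ref{lem: combinatorial strucutre of conjugation map}--\ref{lem: asy L2-L2 isometry} and the first half of the proof of Theorem \ref{thm: selfless from asy angle conditions}.

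There is, however, a genuine gap at the final step. Your argument produces, for $w$ in the level-$N$ subspace of the filtration, a bound $\limsup_n\|\Pi_n(w)\|\leq M_N\|w\|$ with $M_N$ polynomial in $N$; you then assert that this ``combined with the moment matching \ldots lets the ultrapower map extend to the reduced free product.'' It does not, by itself: Proposition \ref{wk convergence+ strong semi conv} requires the asymptotic \emph{contractivity} $\limsup\|\Pi_n(w)\|\leq\|w\|$, and a map that is merely bounded by $M_N$ on level-$N$ words, with $M_N\to\infty$, gives no extension to the reduced free product. The paper removes the constant by the power trick: $\|\phi(w)\|^{2r}=\|\phi((w^*w)^r)\|$ with $(w^*w)^r$ at filtration level $2Nr$, so the polynomial bound yields $\|\phi(w)\|\lesssim (2Nr+1)^{(\sigma+3/2)/2r}\|w\|\to\|w\|$ as $r\to\infty$. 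Crucially this forces a re-indexing (the paper uses $\phi_{4nr,k}$, i.e.\ the unitaries attached to filtration level $4nr$, and takes $\limsup_r\limsup_k$ before the final diagonal extraction), because the inflated rapid decay constant at level $m$ scales like $(m+1)^\gamma$ and one must let the power $r$ grow in tandem with the level. Relatedly, your diagnosis that the danger is the $(n+1)^\beta$ factor ``compounding'' with the outer rapid-decay factor, and that the cure is to apply it only to the $\widehat F_n$-component, is not quite the right mechanism: in the actual estimate the polynomial factors do compound (the bound on $\sum_{p\leq 3n+2}W_{p,n,k}$ is of order $(n+1)^{\sigma+3/2}$), and this is harmless precisely because the power trick kills any fixed polynomial growth. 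What the $L^2$-almost-orthogonality genuinely buys is uniformity of the constant in $k$, not avoidance of the polynomial in $n$.
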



Let $M$ be the GNS completion of $A_{2}$ with respect to $\tau_{2}$. We show in Section \ref{fun continues} that if either $M$ has diffuse central sequence algebra or is a $\textrm{II}_{1}$-factor, then the bullet point conditions in Theorem \ref{thmE} hold with suitable unitaries $u_{n,k}\in \mathcal U(A_{n,2})$ and subspaces $\widehat F_{n,k}\subseteq A_2$. In this way we derive Theorem \ref{mainthm3}.

In the case that $M$ has diffuse central sequence algebra, we start with a normal embedding of 
$L^\infty([-1/2,1/2])$ into  the central sequence algebra of $M$. From this, we build unitaries $u_{n,k}\in A_2$ that satisfy the asymptotic orthogonality conditions above and asymptotically commute with $V_{n,2}$. We then set
\[
\widehat{F}_{n,k}=V_{n,2}\ominus \bC 1\quad\hbox{ for all }k,
\]
and the inflated rapid decay (comparing the operator and $2$-norms in  $\widehat F_{n,k}$) follows from the assumed rapid decay property of $V_{n,2}$.

In the case that $M$ is a $\textrm{II}_{1}$-factor, we 
start with a Haar unitary freely independent from $M$ in its tracial ultrapower, obtained from Popa's theorem \cite{popa1995free}. From this, we build  unitaries $u_{n,k}$ that satisfy the asymptotic orthogonality conditions and are such that $u_{n,k}^{*}(V_{n,2}\ominus \bC 1)u_{n,k}$ and $V_{n,2}\ominus \bC 1$ are asymptotically orthogonal. We then set
\[
\widehat{F}_{n,k}=u_{n,k}^{*}(V_{n,2}\ominus \bC 1)u_{n,k}+V_{n,2}\ominus \bC 1\quad\hbox{ for all }k.
\]
We use  asymptotic orthogonality of  $u_{n,k}^{*}(V_{n,2}\ominus \bC 1)u_{n,k}$ and $V_{n,2}\ominus \bC 1$  to check the inflated rapid decay for $\widehat{F}_{n,k}$.

\subsection*{Acknowledgements:}
It is our pleasure to thank Jamie Gabe, Greg Patchell, Chris Phillips, Chris Schafhauser, Itamar Vigdorovich, and Stuart White for helpful conversations. The first named author acknowledges support from the NSF CAREER award DMS-21447. The second author acknowledges support from NSF award DMS-2350049.

\section{Preliminaries}

\subsection{Preliminary notation}
Given $r\in \bN$, we use $[r]=\{1,\cdots,r\}$. 
By a $C^*$-probability space we understand  a pair 
$(A,\rho)$ where $A$ is a unital $C^*$-algebra and $\rho$ a state on $A$. If $\rho$ is a trace, we call  $(A,\rho)$ a tracial $C^*$-probability space. For $a\in A$, we use $\|a\|_{2}=\rho(a^{*}a)^{1/2}$. We will sometimes denote this by $\|a\|_{L^{2}(\rho)}$ if it is necessary to avoid confusion, but since $\rho$ will usually be clear from context we will typically just write $\|a\|_{2}$.
We also denote by 
$\|\cdot\|_2$ the Hilbert space norm in the GNS representation induced by $\rho$.

If $(A_{j},\rho_{j}),j\in [m]$ are $C^{*}$-probability spaces and the GNS representation of $A_{j}$ with respect to $\rho_{j}$ is faithful, let $(A_{1},\rho_{1})*\cdots*(A_{m},\rho_{m})$ be their free product, and $\rho_{1}*\cdots*\rho_{m}$ be the corresponding state on the free product.
If $\rho_{1},\cdots,\rho_{m}$ are clear from their context, we will often instead use $A_{1}*A_{2}*\cdots*A_{m}$. For $1\leq j\leq m$, we will use $\iota_{j}\colon A_{j}\to A_{1}*A_{2}*\cdots*A_{m}$ for the natural inclusion of $A_{j}$ into the $j^{th}$ piece of the free product.
 If $B_{i}\subseteq A_{i}$ are unital $*$-algebras, then we use $B_{1}*_{\textnormal{alg}}B_{2}*_{\textnormal{alg}}\cdots*_{\textnormal{alg}}B_{m}$ for the $*$-subalgebra of $A_{1}*A_{2}*\cdots*A_{m}$ generated by $\iota_{j}(B_{j}),j\in [m]$.
Often our arguments will involve maps from algebras of the form $A_{1}*_{\textnormal{alg}}A_{2}*_{\textnormal{alg}}A_{1}\to A_{1}*A_{2}$, in this case we slightly generalize the preceding notation to use $\iota_{j}\colon A_{s}\to A_{1}*A_{2}*A_{1}$ when $j\in [3],s\in [2]$ have the same parity (i.e. $s=1$ if $j\in \{1,3\}$ and $s=2$ if $j=2$) to mean the natural inclusion of $A_{s}$ into the $j^{th}$ piece of the free product.

Recall that if $r\in \bN$ and $\Omega$ is a set, then a function $j\colon [r]\to \Omega$ is \emph{alternating} if $j(i)\ne j(i+1)$ for all $1\leq i\leq r-1$. If $(A,\rho)$ is a $C^{*}$-probability space, and $S_{1},\cdots,S_{m}\subseteq A$ we say that $a=a_{1}\cdots a_{\ell}$ is an \emph{alternating word} in $(S_{1},\cdots,S_{m})$ if:
$a_{i}\in S_{j(i)}$ for an alternating function $j\colon [\ell]\to [m]$. If, in addition, $\rho(a_{i})=0$ for all $i$, then we call $a_{1}\cdots a_{\ell}$ an \emph{alternating centered word in $(S_{1},\cdots,S_{m})$.}
Note that, by the description of the free product Hilbert space, if $x\in (A_{1},\rho_{1})*\cdots*(A_{m},\rho_{m})$ is an alternating centered word in $(A_{1},\cdots,A_{m})$, then the decomposition of $x$ as an alternating centered word in $(A_{1},\cdots,A_{m})$ is unique modulo scaling.
Sometimes, especially in Section \ref{sec: here is where the fun begins}
, we will deal with alternating words that are not centered. In this case, the decomposition as an alternating word is not unique. Care must be taken in such cases, but this will not present an issue in our proofs.
In our analysis of alternating centered words, we will occasionally have to deal with words of length zero. As we will usually be in the unital case, we will take the convention that the empty product in a unital algebra is equal to $1$. Similarly, we take the convention that the empty sum is zero.

Throughout we use $A\lesssim B$ to mean that $A$ is less than or equal to a universal constant times $B$. If we wish to stress that the constant depends upon some ambient parameters, we will write $A\lesssim_{\alpha}B$ to mean that $A$ is at most a constant times $B$, where the constant is allowed to depend upon $\alpha$ (similarly $A\lesssim_{\alpha,\beta}B$ means $A$ is at most a constant times $B$, where the constant depends upon $\alpha$ and $\beta$).

\subsection{Selfless $C^*$-algebras}

Let $\omega$ be a free ultrafilter on a 
set $I$. For a $C^{*}$-algebra $A$, we let $A^{\omega}$ be the $C^{*}$-ultrapower of $A$ with respect to $\omega$. If $\rho$ is a state on $A$,
we let $\rho^\omega$ be the state on $A^\omega$
obtained by taking the limit of $\rho$ along $\omega$,
i.e., $\rho^\omega(a)=\lim_{i\to\omega} \rho(a_i)$, where $a\in A^{\omega}$ is the image of $(a_i)_i$ in $A^{\omega}$.

Recall that a $C^{*}$-probability space $(A,\rho)$ such that $\rho$ has faithful GNS representation is called \emph{selfless} (see \cite{robert2023selfless}) if there is another $C^{*}$-probability space $(C,\kappa)$ with $\dim_{\bC}(C)>1$ and an injective $*$-homomorphism  $\theta\colon (A,\rho)*(C,\kappa)\hookrightarrow A^{\omega}$ for some free ultrafilter $\omega$ on a direct set $I$, such that $\theta|_{A}$ is the diagonal embedding and $\rho^{\omega}\circ \theta=\rho*\kappa$.
Selfless $C^{*}$-algebras have many desirable properties such as simplicity, real rank zero if $\rho$ is faithful but not a trace, stable rank one if $\rho$ is a trace, uniqueness of the trace if $\rho$ is a trace, as well as strict comparison if $\rho$ is a trace.

The main criterion for selflessness of $C^{*}$-algebras we will use is the following.

\begin{prop}\label{wk convergence+ strong semi conv}
Let $(A,\rho_{A})$ be a unital  $C^{*}$-probability space with faithful GNS representation. Suppose that there is another $C^{*}$-probability space $(B,\rho_{B})$ with faithful GNS representation, 
and such that $B\neq \bC$. Assume we are given a dense, unital $*$-subalgebra $C\subseteq A*B$ with $C\cap A$ dense in $A$, and linear $*$-preserving maps $\phi_{k}\colon C\to A$ for $k=1,2,\ldots$, such that 
\begin{itemize}
\item $\|\phi_{k}(ab)-\phi_{k}(a)\phi_{k}(b)\|\to_{k\to\infty}0$ for all $a,b\in C$,
     \item $\phi_{k}|_{C\cap A}$ is the identity,
    \item $\rho_{A}\circ \phi_{k}\to (\rho_{A}*\rho_{B})|_{C}$ pointwise,
    \item $\limsup_{k\to\infty}\|\phi_{k}(a)\|\leq \|a\|$ for all $a\in C$.
\end{itemize}
Then $(A,\rho_{A})$ is selfless.
\end{prop}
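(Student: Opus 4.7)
The plan is to construct the embedding required by selflessness directly as the $\omega$-limit of the maps $\phi_k$, and then observe that the four bullet points have been designed so that each one enforces one of the axioms of selflessness.

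Fix a free ultrafilter $\omega$ on $\bN$ and define $\theta_0\colon C\to A^\omega$ by
\[
\theta_0(c) = [(\phi_k(c))_{k\in\bN}]_\omega.
\]
The asymptotic contractivity (fourth bullet) makes the sequence $(\phi_k(c))_k$ bounded, so $\theta_0$ is well defined, and moreover $\|\theta_0(c)\|\leq \limsup_k \|\phi_k(c)\|\leq \|c\|$. The $*$-preserving property lifts from the $\phi_k$, while the asymptotic multiplicativity (first bullet) makes $\theta_0$ multiplicative. Thus $\theta_0$ is a contractive $*$-homomorphism on the dense $*$-subalgebra $C\subseteq A*B$ and extends uniquely by continuity to a contractive $*$-homomorphism $\theta\colon A*B\to A^\omega$.

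Next I would verify the two structural requirements for selflessness. The second bullet gives $\phi_k(a)=a$ for every $k$ whenever $a\in C\cap A$, so $\theta(a)$ is the image of $a$ under the diagonal embedding $A\hookrightarrow A^\omega$. Since $C\cap A$ is dense in $A$ and $\theta$ is continuous, $\theta|_A$ coincides with the diagonal embedding throughout $A$. For the state identity, if $c\in C$ then the third bullet yields
\[
\rho_A^\omega(\theta(c)) = \lim_{k\to\omega}\rho_A(\phi_k(c)) = \lim_{k\to\infty}\rho_A(\phi_k(c)) = (\rho_A*\rho_B)(c),
\]
and continuity together with the density of $C$ in $A*B$ extend this to all of $A*B$, so that $\rho_A^\omega\circ\theta=\rho_A*\rho_B$.

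Finally, I would extract injectivity of $\theta$ from the faithfulness of the GNS representation of $\rho_A*\rho_B$ on $A*B$, which is built into the construction of the reduced free product since both $\rho_A$ and $\rho_B$ have faithful GNS. Indeed, if $\theta(x)=0$ for some $x\in A*B$, then $\theta(y^*x^*xy)=0$ for every $y\in A*B$, and hence $(\rho_A*\rho_B)(y^*x^*xy)=\rho_A^\omega(\theta(y^*x^*xy))=0$. Writing $\Omega$ for the cyclic vector of the GNS Hilbert space $\cH$, this says $xy\Omega=0$ for every $y\in A*B$, and since $(A*B)\Omega$ is dense in $\cH$ we conclude $x=0$. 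I do not expect any single step to be a serious obstacle: the argument is essentially packaging, with the one mildly delicate point being that injectivity is extracted from faithfulness of the GNS of $\rho_A*\rho_B$ (which is assumed) rather than from faithfulness of $\rho_A*\rho_B$ as a state (which is not), via the standard $y^*x^*xy$ trick.
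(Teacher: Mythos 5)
Your proposal is correct and follows essentially the same route as the paper: define $\theta$ as the $\omega$-limit of the $\phi_k$, use the asymptotic contractivity to extend it to a $*$-homomorphism on all of $A*B$, verify the diagonal and state conditions from the second and third bullets, and invoke faithfulness of the GNS representation of $\rho_A*\rho_B$ for injectivity. The only cosmetic difference is that the paper shows $\theta$ is isometric directly via $\|a\|=\sup_{\|x\|_2,\|y\|_2\leq 1}(\rho_A*\rho_B)(y^*ax)$, whereas you deduce injectivity from the $y^*x^*xy$ trick; both are the same use of GNS-faithfulness.
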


\begin{proof}
Let $\omega$ be a free ultrafilter on $\bN$. Let $A^{\omega}$ be the $C^{*}$-ultrapower of $A$, and define $\phi\colon C\to A^{\omega}$ by sending $a$ to the image of the sequence $(\phi_{k}(a))_{k}$. Then for all $a\in C$,
\[\|\phi(a)\|=\lim_{k\to\omega}\|\phi_{k}(a)\|\leq \|a\|.\]
Thus $\phi$ extends to a $*$-homomorphism $A*B\to A^{\omega}$, which we still denote by $\phi$.  Since $\phi$ is a $*$-homomorphism we know $\|\phi(a)\|\leq \|a\|$ for all $a\in A*B$.
By the second bullet point above, we know that  $\phi$ is the identity on $A$ (since $C\cap A$ is dense in $A$). By the third bullet point above, we know that $\rho_{A}^{\omega}\circ \phi=\rho_{A}*\rho_{B}$. 
Since $\rho_{A}*\rho_{B}$ induces a faithful GNS representation of $A*B$, we have for every $a\in A*B$:
\begin{align*}
\|a\|&=\sup_{x,y\in A*B:\|x\|_{2},\|y\|_{2}\leq 1}(\rho_{A}*\rho_{B})(y^{*}ax)\\
&=\sup_{x,y\in A*B:\|x\|_{2},\|y\|_{2}\leq 1}\rho_{A}^{\omega}(\phi(y)^{*}\phi(a)\phi(x))\leq \|\phi(a)\|.
\end{align*}

Thus $\phi$ is operator-norm isometric. So we have shown that $A*B\hookrightarrow A^{\omega}$ is faithful. 
\end{proof}

\section{Rapid decay property for filtrations and tuples}\label{sec: rapid decay}

\begin{defn}\label{defn:RDP}
Let $(A,\rho)$ be a unital $C^{*}$-probability space. A \emph{filtration} of $A$ is an increasing sequence $(V_{n})_{n=0}^{\infty}$ of linear subspaces of $A$ such that:
\begin{itemize}
    \item $V_{0}=\mathbb C 1$,
    \item $V_{n}$ is stable under $*$,
    \item $V_{n}V_{k}\subseteq V_{n+k}$ for all $n,k$,
    \item $\overline{\bigcup_{k}V_{k}}^{\|\cdot\|}=A$.

\end{itemize}
\end{defn}

Note that if we have a sequence $(V_{n})_{n=0}^{\infty}$ of linear subspaces satisfying the first three bullet points, then automatically $\overline{\bigcup_{k}V_{k}}^{\|\cdot\|}$ is a $C^{*}$-subalgebra. So the last bullet point is not a major assumption and can always be reduced to without loss of generality. 

One key example of interest is the following. Suppose that $x=(x_{1},\cdots,x_{r})\in A^{r}$ and that $A=C^{*}(x,1)$. In this case, we obtain a filtration on $A$ by setting 
\[
V_{n}=\{P(x):P\in \bC^{*}\ip{T_{1},\cdots,T_{r}} \textnormal{ and } \deg(P)\leq n\}
\]
for $n=0,1,\ldots$. We call $(V_n)_n$ the natural filtration obtained from the generating tuple $x$.

\subsection{The rapid decay property}

\begin{defn}
Given a $C^*$-probability space $(A,\rho)$ with filtration $(V_{n})_{n}$, we say that $(V_{n})_{n}$ has the \emph{rapid decay property}, or that $(A,\rho)$  has rapid decay relative to $(V_{n})_{n}$, if 
there is an $\alpha>0$ such that 
    \[
    \|a\|\lesssim_{\alpha} (n+1)^{\alpha}\|a\|_{2}
    \]
    for all $a\in V_{n}$.
  When $(V_n)_n$ is the degree-filtration of $C^{*}(x,1)$ coming from a tuple $x\in A^r$, we say that the tuple $x$ has rapid decay.  
\end{defn}

We note that the existence of a filtration with the rapid decay property implies faithfulness of the corresponding GNS representation. This is particularly relevant for us, since we need to assume faithful GNS representation in order to deduce selflessness.

\begin{prop}\label{RDfaithfulGNS}
Let $(A,\rho)$ be a unital $C^{*}$-probability space which has a filtration $(V_{n})_{n=0}^{\infty}$ with the rapid decay property. Then the GNS representation of $\rho$ is faithful.     
\end{prop}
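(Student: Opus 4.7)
The plan is to show that the GNS representation $\pi$ of $\rho$ is isometric on $A$, which (by the uniqueness of the $C^*$-norm on a $C^*$-algebra) is equivalent to faithfulness. The inequality $\|\pi(a)\| \leq \|a\|$ holds automatically, and by the $C^*$-identity $\|\pi(a)\|^2 = \|\pi(a^*a)\|$, it is enough to establish the reverse inequality for self-adjoint elements $a$ in $\bigcup_n V_n$; density of this union in $A$ then extends the result to all of $A$.

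The key step is a spectral-radius trick that exploits the polynomial distortion in the rapid decay inequality. Fix a self-adjoint $a \in V_n$ with $\|a\| = 1$. Since $a$ is normal, $\|a^k\| = \|a\|^k = 1$ for every $k \in \bN$. The multiplicativity $V_n V_n \subseteq V_{2n}$ gives $a^k \in V_{nk}$, so rapid decay yields a constant $C>0$ depending on $\alpha$ with
\[
1 = \|a^k\| \leq C(nk+1)^\alpha \|a^k\|_2.
\]
On the other hand, since $\|\hat 1\|_{L^2(\rho)} = 1$,
\[
\|a^k\|_2 = \|\pi(a)^k \hat 1\|_{L^2(\rho)} \leq \|\pi(a)\|^k.
\]
Combining these two bounds gives $\|\pi(a)\|^k \geq \bigl(C(nk+1)^\alpha\bigr)^{-1}$; extracting $k$-th roots and letting $k \to \infty$ yields $\|\pi(a)\| \geq 1$, since $C^{1/k}$ and $(nk+1)^{\alpha/k}$ both tend to $1$.

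Rescaling yields $\|\pi(a)\| = \|a\|$ for every self-adjoint $a \in V_n$. Applying the $C^*$-identity once more upgrades this to all of $V_n$, and hence to $\bigcup_n V_n$; continuity of $\pi$ then extends it to $A$. I expect no real obstacle here: the argument is a direct spectral-radius trick, mimicking the classical Haagerup-style proof that rapid decay forces faithfulness of the canonical trace on $C^*_\lambda(G)$. The only thing to verify carefully is that the polynomial factor $(nk+1)^\alpha$ is absorbed when taking $k$-th roots, which is immediate from $\log(nk+1)/k \to 0$.
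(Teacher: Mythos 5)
Your proof is correct and follows essentially the same route as the paper's: the power trick $a\mapsto a^k$ (the paper uses $(a^*a)^r$ directly, avoiding your preliminary reduction to self-adjoint elements) combined with the rapid decay bound, the inequality $\|x\|_2\le\|\pi_\rho(x)\|$ coming from the unit cyclic vector, and extraction of $k$-th roots to absorb the polynomial factor. The only difference is cosmetic.
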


\begin{proof}
Choose $c,\alpha>0$ such that $\|x\|\leq c(n+1)^{\alpha}\|x\|_{2}$ for all $x\in V_{n}$ and all $n$. 
Let $\pi_{\rho}$ be the GNS representation of $\rho$. Fix $n\in \bN$ and let $a\in V_{n}$. Then for any $r\in \bN$ we have that $(a^{*}a)^{r}\in V_{2nr}$. Thus
\[\|a\|^{2r}=\|(a^{*}a)^{r}\|\leq c(2nr+1)^{\beta}\|(a^{*}a)^{r}\|_{2}\leq c(2nr+1)^{\beta}\|\pi_{\rho}((a^{*}a)^{r})\|,\]
the last inequality following from the fact that 
\[\|(a^{*}a)^{r}\|_{2}=\|\pi_{\rho}((a^{*}a)^{r})1\|_{L^{2}(\rho)}.\]
Thus
\[\|a\|\leq c^{1/2r}(2nr+1)^{\beta/2r}\|\pi_{\rho}(a)\|.\]
Letting $r\to\infty$ we see that $\pi_{\rho}$ is isometric on $\bigcup_{n}V_{n}$. 
By the density of $\bigcup_{n}V_{n}$ in $A$, we see that $\pi_{\rho}$ is isometric and thus faithful. 
\end{proof}

Our next goal is to adapt arguments of Jolissaint \cite{JoliRDP} to obtain several equivalent ways of characterizing the rapid decay property. It will be helpful to isolate a key result, which will also be used at a few points throughout the paper.

\begin{lem}\label{lem: passing to HS closure}
Let $(A,\rho)$ be a unital $C^{*}$-probability space. Suppose that $V\subseteq A$ is a linear subspace and $C\geq 0$ with $\|a\|\leq C\|a\|_{2}$ for all $a\in V$. Then, $\overline{V}^{\|\cdot\|}$ is complete with respect to $\|\cdot\|_{2}$, $\overline{V}^{\|\cdot\|_{2}}=\overline{V}^{\|\cdot\|}$, and $\|a\|\leq C\|a\|_{2}$ for all $a\in \overline{V}^{\|\cdot\|}$.  
\end{lem}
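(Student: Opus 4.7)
The basic idea is that on $V$ the two norms $\|\cdot\|$ and $\|\cdot\|_2$ are comparable in both directions: the universal bound $\|a\|_2^2 = \rho(a^*a) \leq \|a^*a\| = \|a\|^2$ holds on any $C^*$-probability space, and the hypothesis gives $\|a\| \leq C\|a\|_2$ on $V$. Thus the two norms induce the same Cauchy sequences on $V$, and all three assertions should follow once this equivalence is transferred to the closure.

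First I would prove assertion (3), extending the inequality $\|a\| \leq C\|a\|_2$ from $V$ to $\overline{V}^{\|\cdot\|}$. Given $a \in \overline{V}^{\|\cdot\|}$, choose $v_n \in V$ with $\|v_n - a\| \to 0$; since $\|\cdot\|_2 \leq \|\cdot\|$, one also has $\|v_n - a\|_2 \to 0$, so $\|v_n\| \to \|a\|$ and $\|v_n\|_2 \to \|a\|_2$, and passing to the limit in $\|v_n\| \leq C\|v_n\|_2$ gives the claim.

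Next, for the $\|\cdot\|_2$-completeness of $\overline{V}^{\|\cdot\|}$: given a $\|\cdot\|_2$-Cauchy sequence $(x_n)$ in $\overline{V}^{\|\cdot\|}$, assertion (3) gives $\|x_n - x_m\| \leq C\|x_n - x_m\|_2$, so $(x_n)$ is operator-norm Cauchy and hence converges in operator norm to some $x \in \overline{V}^{\|\cdot\|}$; then $\|x_n - x\|_2 \leq \|x_n - x\| \to 0$ as well.

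Finally, for the equality $\overline{V}^{\|\cdot\|_2} = \overline{V}^{\|\cdot\|}$: the inclusion $\overline{V}^{\|\cdot\|} \subseteq \overline{V}^{\|\cdot\|_2}$ is immediate from $\|\cdot\|_2 \leq \|\cdot\|$. For the reverse, suppose $v_n \in V$ with $\|v_n - a\|_2 \to 0$; then $(v_n)$ is $\|\cdot\|_2$-Cauchy, hence operator-norm Cauchy by the hypothesis on $V$, so it converges in operator norm to some $b \in \overline{V}^{\|\cdot\|}$. The extended inequality of assertion (3) shows that $\|\cdot\|_2$ is in fact a norm on $\overline{V}^{\|\cdot\|}$, and $\|b - a\|_2 \leq \|b - v_n\|_2 + \|v_n - a\|_2 \to 0$ then forces $b = a$. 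I do not anticipate any serious obstacle; the only mild subtlety is the interpretation of $\overline{V}^{\|\cdot\|_2}$, which is resolved once one notes that $\|\cdot\|_2$ separates points on $\overline{V}^{\|\cdot\|}$ by assertion (3), so the closure is unambiguous whether taken in $A$ or in the GNS Hilbert space.
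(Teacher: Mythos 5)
Your proof is correct and takes essentially the same route as the paper's: both transfer the two-sided comparability of $\|\cdot\|$ and $\|\cdot\|_2$ from $V$ to its closure and use it to convert $\|\cdot\|_2$-Cauchy sequences into operator-norm Cauchy ones. The only difference is cosmetic (you establish the inequality on $\overline{V}^{\|\cdot\|}$ first and then deduce completeness, whereas the paper approximates by elements of $V$ throughout), and your closing remark on the interpretation of $\overline{V}^{\|\cdot\|_2}$ when $\rho$ is not faithful is a reasonable clarification of a point the paper leaves implicit.
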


\begin{proof}
Suppose that $a_{n}\in \overline{V}^{\|\cdot\|}$ is Cauchy with respect to $\|\cdot\|_{2}$. Choose $\widetilde{a}_{n}\in V$ with $\|\widetilde{a}_{n}-a_{n}\|\to_{n\to\infty}0$. Since $\|\cdot\|_{2}\leq \|\cdot\|$, we know that $\widetilde{a}_{n}$ is Cauchy with respect to $\|\cdot\|_{2}$. Since $\|x\|\leq C\|x\|_{2}$ for all $x\in V$, we have that $(\widetilde{a}_{n})_{n=0}^{\infty}$ is Cauchy in $A$. Thus there is $a\in \overline{V}^{\|\cdot\|}\subseteq A$ with $\|\widetilde{a}_{n}-a\|\to_{n\to\infty}0$. Since $\|\cdot\|_{2}\leq \|\cdot\|$, it follows that $\|a_{n}-a\|_{2}\to_{n\to\infty}0$. This proves that $\overline{V}^{\|\cdot\|}$ is $L^{2}$-complete and that  $\overline{V}^{\|\cdot\|_{2}}=\overline{V}^{\|\cdot\|}\subseteq A$. Finally, we have that 
\[\|a\|=\lim_{n\to\infty}\|\widetilde{a}_{n}\|\leq C\lim_{n\to\infty}\|\widetilde{a}_{n}\|_{2}=\|a\|_{2}.
\qedhere\]
\end{proof}

For faithful $C^{*}$-probability spaces, we can characterize the rapid decay property via locally convex spaces as follows.

\begin{defn}
Let $(A,\rho)$ be a unital $C^{*}$-probability space, 
with $\rho$ faithful.
By faithfulness, we may identify $A\subseteq L^{2}(A,\rho)$. 

Let $(V_n)_n$ be a filtration on $A$. For $k\in \bN$, let $q_{k}\in B(L^2(A,\rho))$ be the projection onto $\overline{V_{k}}^{\|\cdot\|_2}$, and set $p_{k}=q_{k}-q_{k-1}$. For $y\in L^{2}(A,\rho)$ and $\alpha>0$ we define
\begin{align*}
\|y\|_{S^{\infty,\alpha}} &=\sum_{k=0}^{\infty}(k+1)^{\alpha}\|p_{k}(y)\|,\\
\|y\|_{S^{2,\alpha}} &=\left(\sum_{k=0}^{\infty}(k+1)^{2\alpha}\|p_{k}(y)\|_{2}^{2}\right)^{1/2}.
\end{align*}

Define
vector subspaces
\begin{align*}
S^{\infty,\alpha}((V_{n})_{n}) &=\{y\in L^{2}(A,\rho):\|y\|_{S^{\infty,\alpha}}<+\infty\},\\
S^{2,\alpha}((V_{n})_{n}) &=\{y\in L^{2}(A,\rho):\|y\|_{S^{2,\alpha}}<+\infty\}
\end{align*}
equipped with the norms 
$\|\cdot\|_{S^{\infty,\alpha}}$ and $\|\cdot\|_{S^{2,\alpha}}$, respectively. If $\alpha>1/2$, then it can be shown that both $S^{\infty,\alpha}((V_{n})_{n})$ and $S^{2,\alpha}((V_{n})_{n})$ are complete.
We let 
\begin{align*}
S^{\infty}((V_{n})_{n}) &=\bigcap_{\alpha>0}S^{\infty,\alpha}((V_{n})_{n}),\\
S^{2}((V_{n})_{n}) &=\bigcap_{\alpha>0}S^{2,\alpha}((V_{n})_{n}).
\end{align*}
\end{defn}

We give $S^{\infty}((V_{n})_{n})$ and $S^{2}((V_{n})_{n})$ a locally convex structure by equipping them with the family of norms $(\|\cdot\|_{S^{\infty,\alpha}})_{\alpha>0}$ and $(\|\cdot\|_{S^{2,\alpha}})_{\alpha>0}$, respectively. Restricting these families of seminorms to $\alpha\in \bN$ defines the same topology and thus these locally convex topologies are metrizable. These topologies are also complete (the verification is straightforward and is left to the reader).
In \cite[Remark 5.10, Proposition 5.11]{VoiculescuFiltrations}, Voiculescu gives multiple characterizations of $S^{\infty}((V_{n})_{n})$.

\begin{prop}\label{prop: TFAE RDP}
Let $(A,\rho)$ be a unital $C^{*}$-probability space
with $\rho$ faithful. Let $(V_{n})_{n}$ be a filtration of $A$. Then the following are equivalent:
\begin{enumerate}[(i)]
    \item $(A,\rho)$ has the rapid decay  property relative to $(V_n)_n$.     \label{item:RDP estimate}

    \item there is an $\alpha>0$ such that
    \[\|ab\|_{2}\lesssim_{\alpha} (n+1)^{\alpha}\|a\|_{2}\|b\|_{2}\]
    for all $a\in V_{n}$, $b\in \bigcup_{k}V_{k}$. 
        \label{item:RDP bilinear estimate} 

 \item  $p_n(V_n)\subseteq A$  and there is a $\beta>0$ such that 
    \[\|p_{n}(a)\|\lesssim_{\beta} (n+1)^{\beta}\|a\|_{2}\]
    for all $a\in V_{n}$. 
    \label{item:RDP estimate on layers}

    \item $p_n(V_n)\subseteq A$ and there is a $\beta>0$ such that 
     \[\|p_{n}(a)b\|_{2}\lesssim_{\beta} (n+1)^{\beta}\|a\|_{2}\|b\|_{2}\]
    for all $a\in V_{n}$, $b\in \bigcup_{k}V_{k}$. \label{item:RDP bilinear estimate on layers}

   \item $S^{2}((V_{n})_{n})\subseteq A$. 
   \label{item:RPD implies bounded}

\end{enumerate}
\end{prop}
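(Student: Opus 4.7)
The plan is to prove the equivalences in the order $(i)\Leftrightarrow (ii)$, $(iii)\Leftrightarrow(iv)$, $(i)\Rightarrow(iii)$, $(iii)\Rightarrow(i)$, and $(i)\Leftrightarrow(v)$. The equivalences $(i)\Leftrightarrow(ii)$ and $(iii)\Leftrightarrow(iv)$ are both immediate from the operator-norm identity
\[
\|c\|=\sup\{\|cb\|_{2}:b\in\bigcup_{k}V_{k},\,\|b\|_{2}\leq 1\},
\]
which is valid for any $c\in A$ because $\bigcup_{k}V_{k}$ is dense in $A$ and hence in $L^{2}(A,\rho)$ by faithfulness of $\rho$. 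Applying this with $c=a$ converts a bilinear $L^{2}$-estimate into an operator-norm estimate and yields $(i)\Leftrightarrow(ii)$; applying it with $c=p_{n}(a)$ (which is in $A$ by hypothesis) gives $(iii)\Leftrightarrow(iv)$.

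For $(i)\Rightarrow(iii)$, the key input is Lemma \ref{lem: passing to HS closure}: under $(i)$, the inequality $\|x\|\leq c(k+1)^{\alpha}\|x\|_{2}$ extends from $V_{k}$ to $\overline{V_{k}}^{\|\cdot\|_{2}}$, and in particular $\overline{V_{k}}^{\|\cdot\|_{2}}\subseteq A$. Hence for $a\in V_{n}$ the orthogonal projection $q_{n-1}(a)$ lies in $A$ with $\|q_{n-1}(a)\|\leq cn^{\alpha}\|a\|_{2}$, so $p_{n}(a)=a-q_{n-1}(a)\in A$ with $\|p_{n}(a)\|\lesssim (n+1)^{\alpha}\|a\|_{2}$.

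The main obstacle is $(iii)\Rightarrow(i)$: the hypothesis only furnishes $p_{n}(V_{n})\subseteq A$, not $p_{j}(V_{n})\subseteq A$ for $j<n$, so the naive decomposition $a=\sum_{j=0}^{n}p_{j}(a)$ is not a priori a sum in $A$. I would resolve this by downward induction on $j\leq n$: assuming $p_{j+1}(a),\ldots,p_{n}(a)$ have been placed in $A$ with the expected bounds, the element $q_{j}(a)=a-\sum_{i>j}p_{i}(a)$ lies in $A\cap\overline{V_{j}}^{\|\cdot\|_{2}}$ and hence is an $L^{2}$-limit of a sequence $y_{k}\in V_{j}$; by $(iii)$ the sequence $(p_{j}(y_{k}))_{k}$ is $\|\cdot\|$-Cauchy with operator-norm limit equal to $p_{j}(q_{j}(a))=p_{j}(a)$, satisfying $\|p_{j}(a)\|\leq c(j+1)^{\beta}\|a\|_{2}$. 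Summing over $j$ then yields $\|a\|\lesssim (n+1)^{\beta+1}\|a\|_{2}$. This density-extension step, promoting the hypothesis from $V_{j}$ to its $L^{2}$-closure inductively, is the technical heart of the argument.

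Finally, for $(i)\Leftrightarrow(v)$: given $(i)$, Lemma \ref{lem: passing to HS closure} again yields $\|p_{n}(y)\|\leq c(n+1)^{\alpha}\|p_{n}(y)\|_{2}$ for any $y\in L^{2}(A,\rho)$, and Cauchy--Schwarz applied to $\sum_{n}(n+1)^{-1}\cdot(n+1)^{\alpha+1}\|p_{n}(y)\|_{2}$ then shows that $\sum_{n}p_{n}(y)$ converges in $A$ whenever $y\in S^{2}((V_{n})_{n})$. Conversely, the natural inclusion $S^{2}((V_{n})_{n})\hookrightarrow A$ is a linear map between Fr\'echet spaces with closed graph, since both topologies dominate the $L^{2}$-topology; the closed graph theorem provides constants $C,\alpha>0$ with $\|y\|\leq C\|y\|_{S^{2,\alpha}}$, and specializing to $a\in V_{n}$ (for which $\|a\|_{S^{2,\alpha}}\leq (n+1)^{\alpha}\|a\|_{2}$) recovers $(i)$.
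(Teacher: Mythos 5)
Your proof is correct and follows essentially the same route as the paper: the same density reductions for (i)$\Leftrightarrow$(ii) and (iii)$\Leftrightarrow$(iv), the same use of Lemma \ref{lem: passing to HS closure} for (i)$\Rightarrow$(iii) and (i)$\Rightarrow$(v), the Cauchy--Schwarz summation of layers, and the closed graph theorem for (v)$\Rightarrow$(i). The one place you go beyond the paper is (iii)$\Rightarrow$(i), where you explicitly justify that $p_{j}(a)\in A$ for $j<n$ by extending the layer estimate from $V_{j}$ to its $L^{2}$-closure via a Cauchy argument --- a step the paper applies implicitly when it writes $\|a_{\ell}\|\lesssim(\ell+1)^{\beta}\|a_{\ell}\|_{2}$; your downward induction is slightly more elaborate than needed (a direct density argument for each $j$ suffices, since $q_{j}(a)\in\overline{V_{j}}^{\|\cdot\|_{2}}$ automatically), but it is valid.
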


\begin{proof}
The equivalence of (\ref{item:RDP estimate}) and (\ref{item:RDP bilinear estimate}) follows from the density of $\bigcup_{n}V_{n}$ in $A$. 
The equivalence of (\ref{item:RDP estimate on layers})
and (\ref{item:RDP bilinear estimate on layers}) also follows from the density of $\bigcup_{n}V_{n}$ in $A$. 

To see that (\ref{item:RDP estimate}) implies 
(\ref{item:RDP estimate on layers}), note first that by 
Lemma \ref{lem: passing to HS closure}, $p_n(V_n)\subseteq A$, and that the inequality $\|a\|\lesssim_\alpha (n+1)^\alpha \|a\|_2$ remains valid in $\overline{V_n}^{\|\cdot\|_2}$. Now  (\ref{item:RDP estimate on layers})
follows immediately using that $\|p_n(a)\|_2\leq \|a\|_2$.

To see that (\ref{item:RDP estimate on layers})
implies (\ref{item:RDP estimate}),  let $a\in V_{n}$, and set $a_{\ell}=p_{\ell}(a)$ for $\ell\in \bN\cup\{0\}$.  Then
\begin{align*}
 \|a\|\leq \sum_{\ell=0}^{n}\|a_{\ell}\|\lesssim \sum_{\ell=0}^{k}(\ell+1)^{\beta}\|a_{\ell}\|_{2}
 &\leq \|a\|_{2}\left(\sum_{\ell=0}^{n}(\ell+1)^{2\beta}\right)^{1/2}\\
 &\leq (n+1)^{\beta+1/2}\|a\|_{2}.
\end{align*}

To see that (\ref{item:RPD implies bounded}) implies (\ref{item:RDP estimate}), note that $S^{2}((V_{n})_{n})$ is a locally convex Frechet space. By a version of the closed graph theorem valid for Frechet spaces (see \cite[Theorem 2.15]{RudinFA}, we see that the inclusion map $\iota\colon S^{2}((V_{n})_{n})\to A$ is necessarily continuous. By the continuity criteria for linear maps between locally convex spaces (\cite[Proposition 5.15]{ThaBible}), we see that there is an $\alpha>0$ such that 
\[\|y\|\lesssim_{\alpha} \|y\|_{S^{2,\alpha}}\]
for all $y\in S^{2}((V_{n})_{n})$. Suppose $a\in V_{n}$. Then $p_{k}(a)=0$ for all $k>n$. Thus:
\[\|a\|^{2}\lesssim_{\alpha} \sum_{k=0}^{n}\|p_{k}(a)\|_{2}^{2}(k+1)^{2\alpha}
\leq (n+1)^{2\alpha}\sum_{k=0}^{n}\|p_{k}(a)\|_{2}^{2}\\
 =(n+1)^{2\alpha}\|a\|_{2}^{2}.\]
Taking square roots completes the proof. 

Finally, let us show that  (\ref{item:RDP estimate}) implies (\ref{item:RPD implies bounded}). 
Fix $y\in S^{2}((V_{n})_{n})$.
From Lemma \ref{lem: passing to HS closure} we know that
$p_{k}(y)\in A$ for all $k$, and that
\[
\|p_{k}(y)\|\lesssim_{\alpha}(k+1)^{\alpha}\|p_{k}(y)\|_{2}.
\]
For $K\in \bN$, set 
\[y_{K}=\sum_{k=0}^{K}p_{k}(y).\]
 Note that $y_{K}\in A$ for all $K$. For any pair of integers $L\geq K$ we have:
\begin{align*}
\|y_{L}-y_{K}\|\leq \sum_{k=K+1}^{L}\|p_{k}(y)\|_{\infty}&\leq \sum_{k=K+1}^{L}\|p_{k}(x)\|_{2}(k+1)^{\alpha}\\
&\lesssim_{\alpha} \|y\|_{S^{2,\alpha+1}}\left(\sum_{k=K+1}^{\infty}\frac{1}{(k+1)^{2}}\right)^{1/2}.    
\end{align*}
Thus
\[\lim_{K\to\infty}\sup_{L\geq K}\|y_{L}-y_{K}\|=0,\]
and completeness of $A$ implies that there is a $\hat{y}\in A$ with $\|y_{K}-\hat{y}\|\to_{K\to\infty}0$. Since $\|y-y_{K}\|_{2}\to_{K\to\infty}0$, we see that $\hat{y}=y$ as elements of $L^{2}(A,\tau)$. Thus $y\in A$. 
\end{proof}

The following corollary readily follows 
from Lemma \ref{lem: passing to HS closure}, but we include it for completeness.

\begin{cor}\label{cor: complete the filtration}
Let $(A,\rho)$ be a unital $C^{*}$-probability space, and let $(V_{n})_{n}$ be a filtration of $A$ with the rapid decay property. Then:
\begin{enumerate}[(i)]
\item $\overline{V_{n}}^{\|\cdot\|_{2}}\subseteq A$,
\item $\overline{V_{n}}^{\|\cdot\|_{2}}$ is a filtration of $A$ with the rapid decay property. 
\end{enumerate}
\end{cor}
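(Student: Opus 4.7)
The plan is to invoke Lemma \ref{lem: passing to HS closure} once for each $n$, taking $V = V_n$ and $C = c(n+1)^{\alpha}$, where $c, \alpha > 0$ are the constants supplied by the rapid decay hypothesis on $(V_n)_n$. Note that the faithfulness of the GNS representation, which is needed in order to view $A \subseteq L^2(A,\rho)$ and make sense of $\overline{V_n}^{\|\cdot\|_2}$ as a subset of $A$, comes for free from Proposition \ref{RDfaithfulGNS}.

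For (i), the lemma immediately gives $\overline{V_n}^{\|\cdot\|_2} = \overline{V_n}^{\|\cdot\|}$, and the latter is visibly contained in $A$.

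For (ii), set $W_n := \overline{V_n}^{\|\cdot\|_2} = \overline{V_n}^{\|\cdot\|}$. I would verify the four filtration axioms of Definition \ref{defn:RDP} in turn, all of them transferring from $V_n$ to $W_n$ by routine continuity arguments in operator norm: $W_0 = \bC \cdot 1$ because $V_0 = \bC \cdot 1$ is already norm-closed; $W_n$ is self-adjoint because $V_n$ is and the involution is an operator-norm isometry; the inclusion $W_n W_m \subseteq W_{n+m}$ follows from $V_n V_m \subseteq V_{n+m}$ together with joint operator-norm continuity of multiplication on bounded sets (approximating $a \in W_n$ and $b \in W_m$ by norm-convergent sequences in $V_n$ and $V_m$ respectively); and density of $\bigcup_n W_n$ in $A$ is immediate from $W_n \supseteq V_n$. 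Finally, the rapid decay estimate $\|a\| \le c(n+1)^{\alpha} \|a\|_2$ for $a \in W_n$ is exactly the last conclusion of Lemma \ref{lem: passing to HS closure}.

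There is no real obstacle here beyond bookkeeping: the only conceptual point is the identification $\overline{V_n}^{\|\cdot\|_2} = \overline{V_n}^{\|\cdot\|}$, which packages together both the containment in $A$ and the preservation of the rapid decay estimate in a single step; everything else is a mechanical verification.
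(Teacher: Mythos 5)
Your proof is correct and follows the same route as the paper: both reduce everything to Lemma \ref{lem: passing to HS closure} applied with $C=c(n+1)^{\alpha}$, obtain the identification $\overline{V_n}^{\|\cdot\|_2}=\overline{V_n}^{\|\cdot\|}$, and observe that the operator-norm closure of a filtration is again a filtration. The only difference is that you spell out the routine verification of the filtration axioms, which the paper leaves implicit.
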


\begin{proof}
 The first item explicitly follows from Lemma \ref{lem: passing to HS closure}. Lemma \ref{lem: passing to HS closure} states also that $\overline{V_{n}}^{\|\cdot\|}=\overline{V_{n}}^{\|\cdot\|_{2}}$. Since the norm closure of a filtration is directly seen to be a filtration, the second item follows. 
\end{proof}

Let us examine some preservation properties of the property of rapid decay.
\begin{prop}\label{RDpreservation}
Let $(A_1,\rho_1)$ and $(A_2,\rho_2)$ be $C^*$-probability spaces with rapid decay relative to filtrations $(V_{n,1})_n$ and $(V_{n,2})_n$. 
\begin{enumerate}[(i)]
\item
If $\alpha>0$, then 
\[
(A_1\oplus A_2,\alpha\rho_1+(1-\alpha)\rho_2)
\]
has rapid decay relative to the filtration
$V_0=\bC 1$, $V_n=V_{n,1}\oplus V_{n,2}$ for $n\geq 1$.
\item 
If $p\in \bigcup_n V_{n,1}$ is  a projection such that
$\rho_1(p)>0$, the  $(pA_1p,\frac{1}{\rho_1(p)}\rho_1)$ has rapid decay relative to the filtration $V_{n,1}'=pA_1p\cap V_{n,1}$.

\item  \label{item: tesnor preserves RDP partial}
Suppose that the following condition holds: for all $n\in \bN$, $k\in \bN$,  and  all $s\in \{n,k,n+k\}$, there are  orthonormal bases $(x_{i,s})_{i\in I_{s}}$ for $\overline{V_{s,2}}^{\|\cdot\|_{2}}$ satisfying that $\ip{x_{i,n}x_{j,k},x_{p,n+k}}\geq 0$  for all $i\in [n]$, $j\in [k]$, and 
$p\in [n+k]$.
Then 
\[
(A_1\otimes A_2,\rho_1\otimes \rho_2)
\]
has rapid decay
relative to the filtration $(V_{n,1}\otimes V_{n,2})_n$, where $A_1\otimes A_2$ denotes the minimal tensor product.
\end{enumerate}
\end{prop}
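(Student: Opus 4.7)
The plan is to treat (i), (ii), (iii) separately, with (iii) requiring the most care. In each case I would verify the filtration axioms and then establish the rapid decay estimate; by Proposition \ref{prop: TFAE RDP} this can be done in either the operator-norm form or the bilinear form, whichever is more convenient.

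For (i), the sequence $V_0=\bC 1$, $V_n=V_{n,1}\oplus V_{n,2}$ is straightforwardly a filtration of $A_1\oplus A_2$. The operator norm is $\|(a_1,a_2)\|=\max(\|a_1\|,\|a_2\|)$, while $\|(a_1,a_2)\|_2^2=\alpha\|a_1\|_{2,\rho_1}^2+(1-\alpha)\|a_2\|_{2,\rho_2}^2$. Applying the rapid decay estimates in each factor separately and using $\|a_j\|_{2,\rho_j}\leq \min(\alpha,1-\alpha)^{-1/2}\|(a_1,a_2)\|_2$ yields rapid decay on the direct sum, with exponent equal to the maximum of the two individual exponents.

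For (ii), pick $m$ with $p\in V_{m,1}$. For $n\geq m$ the spaces $V'_{n,1}:=pA_1p\cap V_{n,1}$ contain $\bC p$, are $\ast$-stable, and satisfy $V'_{n,1}V'_{k,1}\subseteq V'_{n+k,1}$; their union is dense in $pA_1p$, so a minor reindexing (setting $V'_n=\bC p$ for $n<m$) produces a filtration in the sense of Definition \ref{defn:RDP}. For $x\in V'_{n,1}\subseteq V_{n,1}$, the rapid decay estimate for $A_1$ gives $\|x\|\leq C(n+1)^\alpha \|x\|_{2,\rho_1}=C\rho_1(p)^{1/2}(n+1)^\alpha \|x\|_{2,\rho_1(p)^{-1}\rho_1}$, which is the required estimate for the corner.

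For (iii), which is the substantial part, I would use the bilinear characterization of rapid decay from Proposition \ref{prop: TFAE RDP}(\ref{item:RDP bilinear estimate}). By Corollary \ref{cor: complete the filtration} I may replace each $V_{n,j}$ by its $\|\cdot\|_2$-closure without loss. Given $a\in V_{n,1}\otimes V_{n,2}$ and $b\in V_{k,1}\otimes V_{k,2}$, expand in the prescribed orthonormal bases as
\[
a=\sum_i a_i\otimes x_{i,n},\qquad b=\sum_j b_j\otimes x_{j,k},
\]
and write $x_{i,n}x_{j,k}=\sum_p \lambda_{ij}^p x_{p,n+k}$ with $\lambda_{ij}^p=\langle x_{i,n}x_{j,k},x_{p,n+k}\rangle\geq 0$. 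Then
\[
\|ab\|_2^2=\sum_p\Big\|\sum_{i,j}\lambda_{ij}^p\, a_ib_j\Big\|_{2,\rho_1}^2,
\]
which by the triangle inequality followed by the bilinear rapid decay in $A_1$ is bounded by $C_1^2(n+1)^{2\alpha_1}\sum_p\big(\sum_{i,j}\lambda_{ij}^p\|a_i\|_{2,\rho_1}\|b_j\|_{2,\rho_1}\big)^2$. Setting $\tilde a=\sum_i\|a_i\|_{2,\rho_1}x_{i,n}$ and $\tilde b=\sum_j\|b_j\|_{2,\rho_1}x_{j,k}$, the positivity of the coefficients $\lambda_{ij}^p$ (and this is exactly where the orthonormal basis hypothesis is essential) lets us identify this last sum with $\|\tilde a\tilde b\|_{2,\rho_2}^2$; without positivity, sign cancellations would only produce an inequality in the wrong direction. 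By orthonormality, $\|\tilde a\|_{2,\rho_2}=\|a\|_2$ and $\|\tilde b\|_{2,\rho_2}=\|b\|_2$, so a final application of bilinear rapid decay in $A_2$ gives $\|ab\|_2\lesssim (n+1)^{\alpha_1+\alpha_2}\|a\|_2\|b\|_2$, which is the required bilinear rapid decay for the product filtration. The main obstacle is precisely this positivity step, where the Clebsch--Gordan-like structure of the orthonormal bases plays a decisive role.
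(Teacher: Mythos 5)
Your proof is correct and follows essentially the same route as the paper's: (i) and (ii) via the same direct norm comparisons, and (iii) via the same orthonormal-basis expansion, using positivity of the coefficients $\langle x_{i,n}x_{j,k},x_{p,n+k}\rangle$ to pass to the auxiliary elements $\tilde a,\tilde b$ and then invoke rapid decay in $A_2$. The one point where the paper is more careful is that in (iii) the estimate should first be proved for finite sums, i.e.\ on the algebraic tensor product $V_{n,1}\odot V_{n,2}$, and then extended to the minimal tensor product by continuity and Lemma \ref{lem: passing to HS closure}, since for a general element of $V_{n,1}\otimes V_{n,2}$ the expansion $a=\sum_i a_i\otimes x_{i,n}$ is an infinite sum and the termwise triangle-inequality manipulations require this reduction.
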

\begin{proof}
(i) This readily follows from $\|(a_1,a_2)\|=\max(\|a_1\|,\|a_2\|)$ and 
\[
\max(\alpha^{\frac12}\|a_1\|_2, (1-\alpha)^{\frac12}\|a_2\|_2)\leq \|(a_1,a_2)\|_2.
\]

(ii) Note that $(V_n')_n$ indeed is a filtration of $pA_1p$ (use $p\in \bigcup V_{n,1}$ to show the density of $\bigcup_n V_{n,1}'$ in $pA_1p$). If $b\in V_{n}'$, then
\[
\|b\|\lesssim_{\alpha} (1+n)^{\alpha}\|b\|_{2,\rho_1}=
(1+n)^{\alpha}\rho_1(p)^{\frac12}\|b\|_{2,\tilde\rho_1},
\]
where $\tilde \rho_1=\frac{1}{\rho_1(p)}\rho_1$.

(iii)  By Corollary \ref{cor: complete the filtration}, we may assume that $V_{n,2}$ is $\|\cdot\|_{2}$-closed for all $n$. Below $\odot$ denotes the algebraic tensor product and $\otimes$ the minimal one. 
Choose $\alpha_{j}>0$, with $j=1,2$, so that 
\[
\|x\|\lesssim (n+1)^{\alpha_{j}}\|x\|_{2}, \textnormal{ for all $x\in V_{n,j}$ and $j=1,2$.}
\]
Fix $n,k\in \bN$, and let $(x_{i,s})_{s\in I_{s}}$ be orthonormal bases as in the assumption for $s\in \{n,k,n+k\}$.
Set $E_{s,2}=\Span\{x_{i,s}:i\in I_{s}\}$.
Let $c\in V_{n,1}\odot E_{n,2}$ and $d\in V_{k,1}\odot E_{k,2}$. Write 
\[
c = \sum_{i\in I_{n}} c_{i}\otimes x_{i,n},\quad d = \sum_{j\in I_{k}} d_{j}\otimes x_{j,k}.
\]
Let $S_{c}=\{i:c_{i}\ne 0\}$, $S_{d}=\{j:d_{j}\ne 0\}$, and note that both these sets are finite. 
For $p\in [n+k]$, let $\lambda_{i,j,p}=\ip{x_{i,n}x_{j,k},x_{p,n+k}}$. Then:
\[cd=\sum_{p}\left(\sum_{(i,j)\in S_{c}\times S_{d}}\lambda_{i,j,p}c_{i}d_{j}\right)\otimes x_{p,n+k}.\]
Since $S_{c},S_{d}$ are finite sets, there are no convergence issues when we write the sum above, and the sum over $p$ converges in $\|\cdot\|_{2}$. Set 
\[
\Theta_{c}=\sum_{i\in S_{c}}\|c_{i}\|_{2}x_{i,n}, \quad \Theta_{d}=\sum_{j\in S_{d}}\|d_{j}\|_2x_{j,k}.
\]
Using that $\lambda_{i,j,p}\geq 0$ for all $i\in [n],j\in [k],p\in [n+k]$, we have:
\begin{align*}
\|cd\|_{2}^{2}=\sum_{p}\left\|\sum_{(i,j)\in S_{c}\times S_{d}}\lambda_{i,j,p}c_{i}d_{j}\right\|_{2}^{2}&\leq   \sum_{p}\left(\sum_{(i,j)\in S_{c}\times S_{d}}\lambda_{i,j,p}\|c_{i}d_{j}\|_{2}\right)^{2}\\
&\lesssim (n+1)^{2\alpha_{1}}\sum_{p}\left(\sum_{(i,j)\in S_{c}\times S_{d}}\lambda_{i,j,p}\|c_{i}\|_{2}\|d_{j}\|_{2}\right)^{2}\\
&=(n+1)^{2\alpha_{1}}\|\Theta_{c}\Theta_{d}\|_{2}^{2},
\end{align*}
with the last inequality following by the assumed rapid decay property of $A_{1}$ with respect to $(V_{\ell,1})_{\ell}$.
By the assumed rapid decay property of $A_{2}$ with respect to $(V_{t,2})_{t}$, we have that 
\[\|\Theta_{c}\Theta_{d}\|_{2}\lesssim (n+1)^{\alpha_{2}}\|\Theta_{c}\|_{2}\|\Theta_{d}\|_{2}=(n+1)^{\alpha_{1}}\|c\|_{2}\|d\|_{2}.\]
Thus 
\[
\|cd\|_{2}\lesssim (n+1)^{\alpha_{1}+\alpha_{2}}\|c\|_{2}\|d\|_{2}
\]
for all $c\in V_{n,1}\odot E_{n,2},d\in V_{k,1}\odot E_{k,2}$. By continuity, the same estimate holds for $c\in \overline{V_{n,1}\odot E_{n,1}}^{\|\cdot\|}$ and  $d\in \overline{V_{k,1}\odot E_{n,2}}^{\|\cdot\|}$. By Lemma \ref{lem: passing to HS closure}, it follows that the above estimate holds for $c\in V_{n,1}\otimes V_{n,2}$ and $d\in V_{k,1}\otimes V_{k,2}$. We thus conclude by applying Proposition \ref{prop: TFAE RDP}. 
\end{proof}

Note that the hypotheses of (\ref{item: tesnor preserves RDP partial}) hold if
$A_{2}=C^{*}_{\lambda}(G)$, where $G$ is 
a finitely generated group, or more generally 
a finitely generated discrete hypergroup (in the sense of \cite{MR2777191}), $\rho_2$ is the canonical trace on $C_\lambda^*(G)$, and $(V_{n,2})_n$ is the filtration obtained from a finite generating set (see Example \ref{RDgroups}).

We end this subsection by noting that we have a well-defined way of saying that a finitely generated  $*$-algebra with a state has the rapid decay property.

\begin{prop}
Let $(A,\rho)$ be a unital $C^{*}$-probability space, and $x\in A^{r}$ with $A=C^{*}(x,1)$. If $x$ has the rapid decay property, then every finite tuple in the $*$-algebra generated by $x$ has the rapid decay property.    
\end{prop}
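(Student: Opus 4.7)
The plan is to reduce the claim to a direct substitution: if the coordinates of a new tuple $y=(y_1,\dots,y_s)$ all lie in the $*$-algebra generated by $x$, then each $y_j$ is a noncommutative $*$-polynomial in $x$ of some degree $d_j$, and a degree-$n$ $*$-polynomial in $y$ becomes a $*$-polynomial in $x$ of degree at most $dn$, where $d=\max_j d_j$. So the filtration of $C^*(y,1)$ induced by $y$ sits inside the filtration of $C^*(x,1)=A$ induced by $x$, only shifted by a multiplicative constant in the degree.

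More precisely, let $(V_n)_{n=0}^\infty$ denote the natural filtration of $A$ coming from $x$, and let $(W_n)_{n=0}^\infty$ denote the natural filtration of the unital $C^*$-subalgebra $B=C^*(y,1)\subseteq A$ coming from $y$. First I would check that $(W_n)_n$ satisfies the four bullet points of Definition \ref{defn:RDP} inside $B$: closure under $*$, the multiplicative property $W_nW_k\subseteq W_{n+k}$, and $W_0=\bC 1$ are immediate from the definition of the natural filtration; density of $\bigcup_n W_n$ in $B$ is the definition of $B$. Then I would verify, by induction on the degree of a $*$-polynomial $Q$ in $s$ variables, that $Q(y)\in V_{d\deg(Q)}$, so that $W_n\subseteq V_{dn}$ for every $n$.

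Finally, the rapid decay assumption on $x$ gives a constant $\alpha>0$ with $\|a\|\lesssim_\alpha (m+1)^\alpha\|a\|_2$ for every $a\in V_m$. Applied with $m=dn$ and combined with the inclusion $W_n\subseteq V_{dn}$, this yields
\[
\|a\|\lesssim_\alpha (dn+1)^\alpha\|a\|_2\lesssim_{\alpha,d}(n+1)^\alpha\|a\|_2
\]
for all $a\in W_n$, where $\|\cdot\|_2$ is computed with respect to $\rho$ (equivalently, with respect to $\rho|_B$, since the two agree on $B$). This is exactly the rapid decay property of the filtration $(W_n)_n$ in the $C^*$-probability space $(B,\rho|_B)$, which by definition means that the tuple $y$ has rapid decay.

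There is no real obstacle here; the only mildly delicate point is bookkeeping of the degrees, i.e.\ checking that substituting a degree-$d$ polynomial into each slot of a degree-$n$ polynomial produces a polynomial of degree at most $dn$, which is a standard induction. Everything else is just the trivial observation that $\|\cdot\|$ and $\|\cdot\|_2$ on $B$ are inherited from $A$, so the inequality witnessing rapid decay of $(V_n)_n$ transfers verbatim to $(W_n)_n$.
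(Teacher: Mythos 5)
Your argument is correct and is essentially the paper's own proof: write $y_j=Q_j(x)$ with $d=\max_j\deg Q_j$, observe that a degree-$n$ $*$-polynomial in $y$ is a degree-at-most-$dn$ $*$-polynomial in $x$ (i.e.\ $W_n\subseteq V_{dn}$), and absorb the factor $(dn+1)^\alpha\le d^\alpha(n+1)^\alpha$ into the constant. No substantive difference.
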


\begin{proof}
 Let $B$ be the $*$-algebra generated by $x$, and let $y=(y_{1},\cdots,y_{s})\in B^{s}$. Write $y_{j}=Q_{j}(x)$ for some $*$-polynomial $Q_{j}$ in $r$ noncommuting variables. If $P$ is a noncommutative $*$-polynomial in $s$ variables, then $F=P\circ (Q_{1},\cdots,Q_{s})$ is a noncommutative $*$-polynomial in $r$ variables. Let $d$ be the maximum of the degrees of the $Q_{j}$'s .Then for some constant $\alpha>0$ we have
 \begin{align*}
 \|P(y)|\|=\|F(x)\|\lesssim (\deg(F)+1)^{\alpha}\|F(x)\|_{2}&=(\deg(F)+1)^{\alpha}\|P(y)\|_{2}\\
 &\leq (\deg(P)d+1)^{\alpha}\|P(y)\|_{2}\\
& \leq d^{\alpha}(\deg(P)+1)^{\alpha}\|P(y)\|_{2}.    
\qedhere \end{align*}
\end{proof}

Because of the above corollary, we may say that a finitely generated  $*$-algebra $B$ equipped with a state $\rho$  has the \emph{rapid decay property} if for some (equivalently any) finite generating tuple $x$, we have that $x$ has the rapid decay property when viewed in the $C^{*}$-completion under the GNS representation of $B$ with respect to $\rho$.

\subsection{Tracial polynomial growth}
Here we introduce a property stronger than rapid decay that
is nevertheless present in many examples of interest.

\begin{prop}\label{polygrowthTFAE}
Let $(A,\tau)$ be a faithful tracial $C^*$-probability space and $(V_n)_{n\ge0}$ a  filtration  with $\dim V_n<\infty$ for all $n$. Then the following are equivalent:
	\begin{enumerate}[(i)]
	\item 
	There exists $\beta>0$ such that 
	\[
	\|aq_n\|_{HS}\lesssim  (1+n)^\beta\|a\|_2\hbox{ for all }a\in A.
	\]
    (Here $\|\cdot\|_{HS}$ denotes the Hilbert-Schmidt norm of operators on $L^2(A,\tau)$ and $q_n\in B(L^2(A,\tau))$ the orthogonal projection
	onto $\overline{V_n}^{\|\cdot\|_2}$.)
	
	\item 
	If $x_1,\ldots,x_{k_n}$ is an orthonormal basis (o.n.b.) of $V_n$, then 
	\[
	n\mapsto \Big\|\sum_{i=1}^{k_n} x_ix_i^*\Big\|
	\] 
	grows at most polynomially.
	
	\item 
	$n\mapsto \dim V_n$ grows at most polynomially and 
    $(A,\tau)$ has rapid decay relative to $(V_n)_{n=0}^\infty$. 
	
	\item 
	There exists $\alpha>0$ such that $S^{2,\alpha}((V_n)_n)\subseteq S^{\infty,0}((V_n)_n)$.
	
	\end{enumerate}	
\end{prop}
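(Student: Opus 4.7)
The plan is to establish the cycle (i) $\Leftrightarrow$ (ii), (ii) $\Rightarrow$ (iii) $\Rightarrow$ (ii), and (iii) $\Leftrightarrow$ (iv). The central object is the positive element $T_n := \sum_{i=1}^{k_n} x_i x_i^* \in V_{2n}$, where $x_1,\ldots,x_{k_n}$ is any o.n.b.\ of $V_n$. A standard change-of-basis computation (the sum is unitarily invariant in the basis) shows that $T_n$ does not depend on the choice of o.n.b., and $\tau(T_n) = k_n = \dim V_n$ trivially.

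To show (i) $\Leftrightarrow$ (ii), I would use a direct trace calculation. By traciality,
\[
\|aq_n\|_{HS}^2 = \sum_{i=1}^{k_n}\|ax_i\|_2^2 = \sum_i \tau(x_i^* a^* a x_i) = \tau(T_n a^* a).
\]
Taking the supremum over $\|a\|_2\leq 1$ gives $\sup\{\tau(T_n a^*a)/\tau(a^*a):a\ne 0\}$, which equals $\|T_n\|$ by faithfulness of the GNS representation and $T_n\ge 0$. Hence (i) holds iff $\|T_n\|^{1/2}$ grows polynomially, which is (ii).

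For (ii) $\Rightarrow$ (iii): polynomial growth of $\dim V_n$ is immediate from $\dim V_n = \tau(T_n)\le \|T_n\|$. For the rapid decay part, given $a=\sum c_i x_i\in V_n$, view $a=(c_1,\ldots,c_{k_n})(x_1,\ldots,x_{k_n})^T$ as a product of a row vector in $M_{1,k_n}(\bC)$ with a column vector in $M_{k_n,1}(A)$; submultiplicativity of the operator norm then gives $\|a\|\le(\sum|c_i|^2)^{1/2}\|\sum x_i x_i^*\|^{1/2}=\|a\|_2\|T_n\|^{1/2}$, which is rapid decay. Conversely, for (iii) $\Rightarrow$ (ii), since $T_n\in V_{2n}$, rapid decay gives $\|T_n\|\lesssim (1+n)^{\alpha}\|T_n\|_2$, while $\|T_n\|_2^2 = \tau(T_n^2)\le \|T_n\|\tau(T_n) = \|T_n\|k_n$; combining and squaring off, $\|T_n\|\lesssim (1+n)^{2\alpha}k_n$ is polynomial under the assumed dimension growth. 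For (iii) $\Rightarrow$ (iv), applying rapid decay to $p_k(y)\in V_k$ band-by-band and then Cauchy--Schwarz,
\[
\sum_k \|p_k(y)\|_{\infty}\lesssim \sum_k (1+k)^{\alpha}\|p_k(y)\|_2\lesssim \Bigl(\sum_k (1+k)^{2\alpha+2}\|p_k(y)\|_2^2\Bigr)^{1/2}\Bigl(\sum_k (1+k)^{-2}\Bigr)^{1/2}\lesssim \|y\|_{S^{2,\alpha+1}},
\]
so $S^{2,\alpha+1}\subseteq S^{\infty,0}$.

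The main obstacle is (iv) $\Rightarrow$ (iii). By the closed graph theorem applied to the Fr\'echet pair $(S^{2,\alpha},S^{\infty,0})$, the set inclusion is automatically a continuous embedding $\|y\|_{S^{\infty,0}}\le C\|y\|_{S^{2,\alpha}}$. Applying this to $y\in V_n$ yields $\|y\|_{\infty}\le \sum_{k\le n}\|p_k(y)\|_{\infty}\le C(1+n)^{\alpha}\|y\|_2$, which already gives rapid decay. The delicate remaining task is to extract polynomial growth of $\dim V_n$ from (iv). My first attempt would be to test the continuous embedding against $y=T_n\in V_{2n}$, obtaining $\|T_n\|\le C(1+2n)^{\alpha}\|T_n\|_2$, and then bootstrap via $\|T_n\|_2^2\le \|T_n\|\tau(T_n)=\|T_n\|k_n$; one checks whether this, in combination with $k_n\le \|T_n\|$, can be iterated (perhaps on powers $T_n^m\in V_{2nm}$) to yield $k_n$ polynomial. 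If that pointwise approach does not close up, the alternative is to exploit the genuinely $\ell^1$-flavored nature of the $S^{\infty,0}$ seminorm (as opposed to the band-by-band rapid decay) by testing against elements $y=\sum_{k\le n}y_k$ with $y_k\in W_k$ chosen to simultaneously activate all bands, thereby converting the continuous inclusion into a quantitative constraint on $\sum_k \dim(W_k)/(1+k)^{2\alpha}$ that forces polynomial growth of $k_n$. I expect this last step to require the most care and the most creative test-vector construction.
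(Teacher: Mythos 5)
Your arguments for (i) $\Leftrightarrow$ (ii), (ii) $\Rightarrow$ (iii), (iii) $\Rightarrow$ (ii), and (iii) $\Rightarrow$ (iv) are correct and essentially identical to the paper's: the paper works with $d_n=T_n^{1/2}$ rather than $T_n$, but the trace identity $\|aq_n\|_{HS}^2=\tau(a^*aT_n)$, the row--column estimate $\|a\|\leq\|a\|_2\|T_n\|^{1/2}$, the bound $\dim V_n=\tau(T_n)\leq\|T_n\|$, and the band-by-band Cauchy--Schwarz argument all coincide. (Your derivation of (iii) $\Rightarrow$ (ii) via $T_n\in V_{2n}$ is in fact a little more careful than the paper's, which applies rapid decay directly to $d_n$ even though $d_n$ need not lie in any $V_m$.)

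The gap you flag at (iv) $\Rightarrow$ (iii) is real, and neither of your proposed repairs can close it, because that implication is false as stated. The tell is already in your own proof of (iii) $\Rightarrow$ (iv): it uses only rapid decay, never the polynomial bound on $\dim V_n$. Hence rapid decay alone implies (iv) (with $\alpha=\gamma+1$ if $\gamma$ is a rapid decay exponent), so (iv) cannot force polynomial dimension growth. Concretely, $C^*_\lambda(\bF_2)$ with the word-length filtration satisfies the hypotheses ($\dim V_n<\infty$, faithful trace), has rapid decay by Haagerup's inequality and hence satisfies (iv), yet $\dim V_n\sim 3^n$, so (i)--(iii) all fail. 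Your $T_n$-bootstrap indeed collapses to the tautology $k_n\lesssim (1+n)^{2\alpha}k_n$, and no test-vector construction can succeed. For the record, the paper attempts exactly the test-vector route you anticipate --- a sparse sum $a=\sum_k \|z_{n_k}\|^{-1/p}z_{n_k}$ over bands with $\rank(p_{n_k})\geq n_k^{k}$ --- but its claimed bound $\|a\|_{S^{2,\alpha}}^2\leq \sum_k (1+n_k)^{\alpha}n_k^{k(2-2/p)}<\infty$ is erroneous: $2-2/p>0$ for $1<p<2$, so that series diverges. The salvageable content is the equivalence of (i), (ii), (iii) together with the one-way implication (iii) $\Rightarrow$ (iv), which is all the rest of the paper uses; your completed portion is therefore the full correct content of the proposition.
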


\begin{proof}
With $k_n=\dim V_n$ and $x_1,\ldots,x_{k_n}$ as in (ii), let
\[
d_n=\Big(\sum_{i=1}^{k_n} x_ix_i^*\Big)^{\frac12}.
\]
Then
\[
\|aq_n\|_{HS}^2=
\sum_{i=1}^{k_n} \tau(x_i^*a^*ax_i)=
\tau(a^*ad_n^2)=\|ad_n\|_2^2
\]
for all $a\in A$. That is, $\|aq_n\|_{HS}=\|ad_n\|_2$. From this we get that
\[
\sup_{a\in A\backslash\{0\}} \frac{\|aq_n\|_{HS}}{\|a\|_2}= \|d_n\|.
\] 
This show (i) $\Leftrightarrow$ (ii).

(ii) implies (iii). The polynomial growth of $\dim V_n$ follows from 
\[
(\dim V_n)^{\frac12} = \|d_n\|_2\leq  \|d_n\|.
\]	

For $a=\sum_{i=1}^{k_n} \alpha_i x_i\in V_n$,  we have
\[
\|a\| \leq \|a\|_2 \|d_n\|
\]
(using that $\left\|\sum_{i}b_{i}^{*}y_{i}\right\|\leq \left\|\sum_{i}b_{i}^{*}b_{i}\right\|^{1/2}\left\|\sum_{i}y_{i}^{*}y_{i}\right\|^{1/2}$ for $b_{1},\ldots,b_{n},y_{1},\ldots,y_{n}\in A$). It follows that $A$ has rapid decay.

(iii) implies (iv). Suppose that $A$ has rapid decay with exponent $\alpha$. Then
\[
\|d_n\|\lesssim (1+n)^\alpha \|d_n\|_2=(1+n)^\alpha (\dim V_n)^{\frac12}.
\]
Thus, $\|d_n\|$ has polynomial growth.

(ii) implies (iv). Write $a=\sum_n a_n$, with $a_n=p_n(a)\in V_n\ominus V_{n-1}$, and define $y_n=(d_n^2-d_{n-1}^2)^{\frac12}$ for all $n$. Then
\[
\|a\|_{\infty,0}=\sum_n \|a_n\| \leq \sum_n \|a_n\|_2 \|y_n\|.
\]
It follows that if $\|y_n\|\lesssim (1+n)^\alpha$, then $S^{2,\alpha}$ is contained in $S^{\infty,0}$.

(iv) implies (iii). Since $S^{\infty,0}((V_n)_n)\subseteq A$, 
we get that $(A,\tau)$ has rapid decay (by Proposition \ref{prop: TFAE RDP} (v)). In order to show that $\dim V_n$ grows polynomially, 
if suffices to show that $\rank (p_n)=\dim (V_n\ominus V_{n-1})$ grows polynomially.
Suppose, for the sake of contradiction, that $\rank(p_n)$ does not grow 
polynomially. Find a strictly increasing sequence $(n_k)_k$ such that $\mathrm{rank} (p_{n_k})\geq n_k^k$.
Choose any $1<p<2$, and define
\[
a=\sum_k \frac{1}{\|z_{n_k}\|^{\frac 1p}}z_{n_k},
\]
where $z_{n_k}=\sum_{n_k}^{n_k+1} x_i$. Since $\|z_{n_{k}}\|\geq \|z_{n_{k}}\|_{2}=\rank(p_{n_{k}})^{1/2}$, $a\notin S^{\infty,0}((V_n)_n)$. On the other hand,
\begin{align*}
\|a\|_{2,\alpha}^2 &=\sum_k (1+n_k)^\alpha \frac{1}{\|z_{n_k}\|^{\frac 2p}}\rank(p_{n_k})^2\\
&\leq  \sum_k (1+n_k)^\alpha \rank(p_{n_k})^{2-\frac2p}\\
&\leq\sum_k (1+n_k)^\alpha n_k^{k(2-\frac2p)} <\infty.
\end{align*}
That is, $a\in S^{2,\alpha}((V_n)_n)$. This yields the desired contradiction.
\end{proof}

\begin{defn}
We say that a tracial $C^*$-probability space $(A,\tau)$ has tracial polynomial growth relative to a filtration $(V_n)_{n}$ if any of the equivalent conditions of the previous proposition hold.	
\end{defn}

\begin{rem}    
A filtration $(V_n)_n$ of a $C^*$-algebra $A$ is said to satisfy Voiculescu's condition if
\[
\liminf \dim V_n^{\frac1n}=1.
\]	
(See \cite{Vaillant}). If  $A$ admits a filtration   
that satisfies Voiculescu's condition, then $A$ is nuclear, by \cite[Corollary 2.2]{KirchbergVaillant}, and  by \cite[Corollary 5.4]{VoiculescuFiltrations}
every quotient of $A$ has a tracial state, i.e.,
 $A$ has the QTS property. Thus, if $A$ has tracial polynomial growth, then $A$ has rapid decay, it is nuclear, and has the QTS property.
We note that, by \cite[Theorem 2]{Ozawa}, a $C^*$-algebra is nuclear and with the QTS property if and only if it is symmetrically amenable.  	
\end{rem}


Polynomial growth is stable under tensor products:

\begin{prop}\label{prop: poly growth tens}
Suppose that $(A,\rho_{A}),(B,\rho_{B})$ are $C^{*}$-probability spaces which have rapid decay relative to filtrations $(V_{n})_{n},(W_{n})_{n}$. Suppose that $\dim(V_{n})\lesssim (n+1)^{\gamma}$ for some $\gamma>0$. Then $A\otimes B$ has rapid decay relative to the filtration $V_{n}\otimes W_{n}$.
\end{prop}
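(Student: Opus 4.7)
My plan is to argue directly, bypassing the compatibility condition on orthonormal bases that was needed for Proposition \ref{RDpreservation}(iii), by exploiting the polynomial dimension bound on $V_n$ together with a Cauchy--Schwarz estimate that incurs only negligible loss. The key observation is that when $\dim V_n$ grows polynomially, an $\ell^{1}$-versus-$\ell^{2}$ passage over a basis costs only a factor of $(n+1)^{\gamma/2}$, which is exactly what we can absorb into a rapid decay exponent.

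The first step is to verify that $(V_n\otimes W_n)_n$ is a filtration of $A\otimes B$: the algebraic requirements are immediate, and density follows since any elementary tensor with factors in $V_N$ and $W_M$ lies in $V_{\max(N,M)}\otimes W_{\max(N,M)}$. Next, choose rapid decay exponents $\alpha$ and $\beta$ for $A$ and $B$, and set $d_n=\dim V_n\lesssim(n+1)^{\gamma}$. Since rapid decay forces $\rho_A$ to have faithful GNS representation (Proposition \ref{RDfaithfulGNS}), $\|\cdot\|_{2,\rho_A}$ is an inner product on the finite-dimensional space $V_n$, so one can fix an orthonormal basis $(a_i)_{i=1}^{d_n}$ of $V_n$. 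Any $c$ in the algebraic tensor product $V_n\odot W_n$ then admits a unique decomposition $c=\sum_{i=1}^{d_n}a_i\otimes b_i$ with $b_i\in W_n$. The main estimate chains together five ingredients:
\begin{align*}
\|c\| &\leq \sum_{i=1}^{d_n}\|a_i\|\,\|b_i\|  \qquad\text{(triangle inequality in $A\otimes_{\min}B$)}\\
&\lesssim_{\alpha}(n+1)^{\alpha}\sum_{i=1}^{d_n}\|b_i\|\qquad\text{(rapid decay on $A$, using $\|a_i\|_2=1$)}\\
&\lesssim_{\beta}(n+1)^{\alpha+\beta}\sum_{i=1}^{d_n}\|b_i\|_2\qquad\text{(rapid decay on $B$)}\\
&\leq (n+1)^{\alpha+\beta}\,d_n^{1/2}\Bigl(\sum_{i=1}^{d_n}\|b_i\|_2^{2}\Bigr)^{1/2}\qquad\text{(Cauchy--Schwarz)}\\
&\lesssim (n+1)^{\alpha+\beta+\gamma/2}\|c\|_{2},
\end{align*}
where the last equality uses that the tensor product GNS inner product factors as $\sum_i\|b_i\|_2^{2}=\|c\|_2^{2}$ thanks to the orthonormality of $(a_i)$. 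Finally, Lemma \ref{lem: passing to HS closure} promotes this estimate from $V_n\odot W_n$ to the norm closure $V_n\otimes W_n$ inside $A\otimes_{\min}B$, yielding rapid decay of the tensor product.

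I do not anticipate any serious obstacle: each step is routine once the decomposition $c=\sum_i a_i\otimes b_i$ is fixed. The one place requiring mild care is working initially with the algebraic tensor product so that this basis expansion is unambiguous, and only passing to the $C^{*}$-closure at the very end via Lemma \ref{lem: passing to HS closure}. Conceptually, the proposition is exactly the statement that the ``cheap'' Cauchy--Schwarz bound on $\sum\|b_i\|_2$, which loses $\sqrt{\dim V_n}$, is absorbed by the polynomial dimension hypothesis on the left-hand filtration.
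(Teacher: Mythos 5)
Your proof is correct and follows essentially the same route as the paper's: expand $c$ over an orthonormal basis of $V_n$, apply the triangle inequality, use rapid decay on each factor, and absorb the $\sqrt{\dim V_n}$ loss from Cauchy--Schwarz into the polynomial dimension bound. The only (welcome) refinement is your explicit care in working first on the algebraic tensor product and then invoking Lemma \ref{lem: passing to HS closure} to pass to the closure.
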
		

\begin{proof}
Choose $\alpha,\beta>0$ so that $\|x\|\lesssim (n+1)^{\alpha}\|x\|_{2}$ for all $x\in V_{n}$, and $\|y\|\lesssim (n+1)^{\beta}\|y\|_{2}$ for all $y\in W_{n}$.

Fix $n\in \bN$, and let $(x_{j})_{j=1}^{k_{n}}$ be an orthonormal basis for $V_{n}$. Given $a\in V_{n}\otimes W_{n}$, we can then write $a=\sum_{j=1}^{k_{n}}x_{j}\otimes a_{j}$ with $a_{j}\in W_{n}$. Then:
\[\|a\|\leq \sum_{j=1}^{k_{n}}\|x_{j}\|\|a_{j}\|\lesssim (n+1)^{\alpha+\beta} \sum_{j=1}^{k_{n}}\|a_{j}\|_{2}\leq (n+1)^{\alpha+\beta}k_{n}^{1/2}\|a\|_{2}.\]
Using that $k_{n}\lesssim (n+1)^{\gamma}$ completes the proof.
\end{proof}


\subsection{Examples}\label{RD filtrations examples}
We give some natural examples of filtrations which have either tracial polynomial growth or the rapid decay property. 

\begin{ex}\label{RDgroups}
 Suppose that $G$ is a group with a finite generating set $S$ with $S=S^{-1}$. Setting $B_{n}=(S\cup \{e\})^{n}$, we have a filtration of $C^{*}_{\lambda}(G)$ given by
\[
V_{n}=\Span\{\lambda(g):g\in B_{n}\}.
\]  
Let $\tau$ denote canonical trace on $C_\lambda^*(G)$.
It is direct to check that the rapid decay property  of $(C_\lambda^*(G),\tau)$ relative to
 $(V_{n})_{n}$ is equivalent to the rapid decay property of $G$ defined in \cite{Jolissaint}. We refer the reader to \cite{ChatterjiRDP} for a survey of results on the rapid decay property in the group context. It can also be readily verified, e.g. using Proposition \ref{polygrowthTFAE} (ii),
that $(C_\lambda^*(G),\tau)$ has tracial polynomial growth if and only if 
$n\mapsto \# B_n$ grows at most polynomially, i.e., 
if $G$ is a group with polynomial growth.
\end{ex}

\begin{ex}
Let $\mu$ be a Borel probability measure with compact support $K\subseteq \bR$. Consider, on the $C^*$-probability space $(C(K), \int \cdot \,d\mu)$, the filtration
\[
V_n=\{P\in \bC[t]:\deg(P)\leq n\}.
\]
Since $\dim V_n=n+1$ for all $n$, this filtration has rapid decay if an only if it has tracial polynomial growth (Proposition  \ref{polygrowthTFAE}). 
Let $P_0,P_1,P_2\ldots$
be the sequence of orthonormal polynomials associated to $\mu$, i.e., the orthonormal sequence obtained applying the Gram-Schmidt process to $\{1,t,t^2,\ldots\}$. By  
 Proposition  \ref{polygrowthTFAE},  $(V_n)_n$  has tracial polynomial growth 
 if and only if $\|P_{n}\|\leq c(n+1)^{\beta}$ for some constants $c,\beta\geq 0$ (here $\|P\|$ is the sup norm of $P$ over  $K$). 
Thus we have a connection between the rapid decay and polynomial growth properties with respect to the filtration by polynomials of degree at most $n$ and the theory of orthogonal polynomials. 
\end{ex}

\begin{ex}
Consider the case $A=C([-1,1])$ with $\tau(f)=\frac{1}{2}\int_{-1}^{1}f(x)\,dx$. Let $V_{n}=\{P\in \bC[t]:\deg(P)\leq n\}$. Let $P_{n}$ be the Legendre polynomials defined iteratively by $P_{0}(x)=1$, $P_{1}(x)=x$ and $(n+1)P_{n+1}(x)=(2n+1)xP_{n}(x)-nP_{n-1}(x)$ (see \cite[Section 4.5]{SzegoOrthoPolys}).
Then $P_{n}$ are orthogonal with respect to $\tau$ and $|P_{n}|\leq 1$ on $[-1,1]$ (see \cite[Theorem 7.3.21]{SzegoOrthoPolys}). Moreover $\|P_{n}\|_{2}=\frac{1}{\sqrt{2n+1}}$ (see \cite[equation 4.3.3]{SzegoOrthoPolys}). and $\Span\{P_{j}:0\leq j\leq n\}=V_{n}$. Thus setting $Q_{n}=\sqrt{2n+1}P_{n}$, we have that $\|Q_{n}\|_{C([-1,1])}\leq \sqrt{2n+1}$ and so the preceding example tells us that $(V_{n})_{n=0}^{\infty}$ has tracial polynomial growth, and the rapid decay property. Note that setting $\widetilde G_n(x)=G_n(2x-1)$
we obtain orthonormal polynomials relative to the Lebesgue measure in $[0,1]$. In this way we verify that $(V_n)_n$
has tracial polynomial growth in $C([0,1], \int \, dm)$.
\end{ex}

\begin{ex}\label{ex: any diffuse measure on interval}
Suppose that $a<b$ are real numbers and that $\mu$ is an atomless Borel probability measure whose support is the interval $[a,b]$. Then $(C([a,b]),\int\cdot \, d\mu)$
is isomorphic, as a $C^*$-probability space, to 
$(C([0,1]), \int\cdot \, dm)$ (where $m$ denotes the Lebesgue measure). Indeed, since $\mu$ is atomless and fully supported on $[a,b]$,  $F\colon [a,b]\to [0,1]$ given by $F(t)=\mu([a,t])$ is a homeomorphism such that $F_{*}(\mu)$ is the Lebesgue measure. 
Thus, we can always endow $C([a,b])$ with 
a filtration having tracial polynomial growth (and the rapid decay property) through its identification
with $(C([0,1]), \int\cdot \, dm)$ (see previous example).
\end{ex}

\begin{ex}
Consider the case $A=C([0,\pi])$ with $\tau(f)=\frac{1}{\pi}\int_{0}^{\pi}f(t)\,dt$. Let $h\in A$ be given by $h(x)=\cos(x)$. Set $V_{n}=\{P\circ h:P\in \bC[t], \deg(P)\leq n\}$. Since $h$ is injective on $[0,\pi]$, Stone-Weierstrass implies that this is indeed a filtration of $A$. Define $y_{k}(t)=\sqrt{2}\cos(t)$. We see iteratively using the product formula for cosine this $V_{n}=\Span\{y_{k}:k=0,\cdots,n\}$. Moreover, the $y_{k}$ are orthonormal with respect to $\tau$. Since $\|y_{k}\|=\sqrt{2}$, we see that $(V_{n})_n$ has tracial polynomial growth and the rapid decay property.
\end{ex}

\begin{ex}\label{ex:sc}
Consider the case $A=C([-2,2])$, with $\tau$ being integration against the semicircular distribution, i.e. $\tau(f)=\frac{1}{2\pi}\int_{-2}^{2}f(t)\sqrt{4-t^{2}}\,dt$. Let $U_{n}$ be the Tchebychev polynomials of the second kind defined iteratively by $U_{0}(x)=1$, $U_{1}(x)=2x$ and $U_{n+1}(x)=2xU_{n}(x)-U_{n-1}(x)$. 
We see by induction that $U_{n}(\cos(n\theta))=\frac{\sin(n+1)\theta}{\sin(\theta)}$ and that $U_{n}(\cos(\theta))=2\cos(\theta)+U_{n-2}(\cos(\theta))$. The last equation implies inductively that $\sup_{t\in [-1,1]}|U_{n}(t)|\leq n+1$. 
Set $G_{n}(t)=U_{n}(t/2)$. The relation $U_{n}(\cos(\theta))=1$ further implies that $(G_{n})_{n}$ and are orthonormal respect to $\tau$ (see \cite[Section 4.1]{SzegoOrthoPolys}). 
Further $\Span\{G_{k}:0\leq k\leq n\}=\{P\in \bC[t]:\deg(P)\leq n\}$. Since $\|G_{n}\|_{C([-2,2]}\leq n+1$, we see once again the filtration given by polynomials of degree at most $n$ have tracial polynomial growth and the rapid decay property with respect to the semicircular measure.  
\end{ex}

\begin{ex}
Let  $\mu$ be a Borel measure on $\bR$ with compact support $K$. The measure $\mu$
is called regular in the sense of Stahl and Totik, if it 
satisfies any of multiple equivalent conditions, one of which
is that
\[
\limsup_n |p_n(x)|^{\frac1n}\leq 1
\]
for quasi-every $x\in K$ (i.e., except on a set
of capacity zero). See \cite[Theorem 1.10]{simonequilibrium}. 
Here $(p_n)_{n=0}^\infty$ denotes the sequence of orthonormal polynomials with respect to $\mu$.
Thus,  if $\mu$ is non-regular and with compact support $K$ of positive capacity, there exists at least one $x\in K$ such that
$n\mapsto |p_n(x)|$ grows exponentially. In particular, 
$(C(K), \int \, d\mu)$ does not have rapid decay 
relative to the ``polynomial degree filtration" $\{P\in \bC[t]:\deg P\leq n\}$. See
\cite[Example 1.6]{simonequilibrium} for examples of non-regular measures with support $[-1,1]$.  By \cite[Corollary 5.6]{simonequilibrium}, there exist non-regular measures with support the standard Cantor set $K_C\subseteq [0,1]$ (whose capacity is positive). Thus, for such measures, $C(K_C)$ does not have rapid decay relative to the polynomial degree filtration. Note however, that since 
$C(K_C)$ is an AF $C^*$-algebra, $(C(K_C),\int\, d\mu)$ does admit filtrations with rapid decay, as remarked on in the next example.
\end{ex}
    
\begin{ex}\label{ex:AF}
Let $A$ be a separable AF $C^*$-algebra and $\rho$ a faithful state on $A$. Write $A=\overline{\bigcup_n A_n}^{\|\cdot\|}$, where $(A_n)_n$ is an increasing sequence of finite dimensional $C^*$-subalgebras. Let $c_{n}=\sup_{a\in A_{n}:\|a\|_{2}=1}\|a\|$, which is finite for every $n$ since $A_{n}$ is finite-dimensional and $\rho$ is faithful.
Choose any strictly increasing integer sequence $k_1<k_2<\ldots$ such that $c_n\leq k_n$, and define  $V_{k_n}=A_n$ for all $n$, and
$V_m=V_{k_n}$ for $k_n\leq m<k_{n+1}$. These spaces form a filtration of $A$ (since each $A_n$ is a $C^*$-subalgebra). Moreover, for $a\in V_m$ with $k_n\leq m<k_{n+1}$ we have
$\|a\|\leq c_n\|a\|_2\leq k_n\|a\|_2\leq m\|a\|_2$.
Thus $(A,\rho)$ has the rapid decay property with respect to $(V_{n})_{n}$. If $\rho$ is a trace, we further then have tracial polynomial growth.  

\end{ex}		
	
\begin{ex}
Let $G$ be a connected compact Lie group. Let $\tau$ be the state on $C(G)$ induced by the Haar measure.
Let us show here that $(C(G),\tau)$ has polynomial growth.
Given an irreducible unitary representation 
$\pi\colon G\to M_{n_\pi}(\bC)$
of $G$, define
\[
E_\pi=\mathrm{span}\{g\mapsto \pi(g)_{ij}:i,j=1,\ldots,n_\pi\}\subseteq C(G).
\]
That is, $E_\pi$ is the span of the ``matrix coefficients'' of $\pi$. Let $\widehat G$ denote the set of irreducible representations of $G$.  It is shown in \cite[Example 3.5]{Vergnioux}, and in more detail in \cite[Theorem 2.1]{BanicaVergnioux},
that a length function $\ell \colon \widehat G\to \{0,1,\ldots\}$ can be defined
such that the spaces 
\[
V_n=\mathrm{span}\{E_\pi:\ell(\pi)\leq n\}
\]
form a filtration of $C(G)$ with $n\mapsto \dim V_n$ having polynomial growth. We note that, by the Schur orthogonality relations, 
the functions
\[
x_{i,j,\pi}(g)=\sqrt{\dim n_\pi}\cdot \pi(g)_{ij},\quad i,j=1,\ldots,n_\pi,\quad \ell(\pi)\leq n, 
\]
form an o.n.b. of $V_n$. We readily verify via Proposition \ref{polygrowthTFAE} (ii) that  $(C(G),\tau)$ has tracial polynomial growth.
\end{ex}

\begin{ex}
Let $G$	be a connected compact Lie group and let $H$ be a closed subgroup. Identify $C(G/H)$ with the $C^*$-subalgebra of $C(G)$ of 
right $H$-invariant functions, i.e., such that $f(gh)=f(g)$
for all $g\in G$ and $h\in H$. Let us show that $(C(G/H),\tau|_{C(G/H)})$
has polynomial growth. For each $\pi\in \widehat G$, let 
$E_{\pi,H}=E_{\pi}\cap C(G/H)$.
It is known that $\bigcup_{\pi} E_{\pi,H}$ is norm dense in $C(G/H)$. (Indeed, the expectation
\[
f\mapsto P(f)(g):= \int_H f(gh)\, dh,\hbox{ for }g\in G,
\]
maps $E_{\pi}$ onto $E_{\pi,H}$ for all $\pi$, and thus  the norm dense set $\bigcup_{\pi} E_{\pi}$ onto $\bigcup_{\pi} E_{\pi,H}$). 
It follows that   
\[
V_{n,H}:=V_n\cap C(G/H)
\]
 is a filtration for $C(G/H)$. Since $V_{n,H}\subseteq V_n$ for all $n$, it follows by 
Proposition \ref{polygrowthTFAE} (iii) that $(C(G/H),\tau|_{C(G/H)})$ has tracial polynomial growth.
\end{ex}




\section{Rapid decay property for free products}\label{RD free product section}

\subsection{Preliminaries from Ricard-Xu}

A crucial part of our approach will be showing that if $(A_{j},\rho_{j}),j=1,2$ are two $C^{*}$-probability spaces with filtrations $(V_{n,j})_{n=0}^{\infty}$ with the rapid decay property, then $A_{1}*A_{2}$ has an appropriate filtration with the rapid decay property. In order to do this, we rely on fundamental results of Ricard-Xu \cite{KhinRX}, although we follow the presentation given in \cite{PisierRDP}.

Let $A_{j},j\in [m]$ be $C^{*}$-algebras, and for $j=1,2$ let $\rho_{j}$ be states on $A_{j}$ with faithful GNS representation. We set 
\begin{align*}
(A,\rho) &=(A_{1},\rho_{1})*(A_{2},\rho_{2})*\cdots (A_{m},\rho_{m}),\\
\cH &=L^{2}(A,\rho).
\end{align*}

For $\ell\in \bN$, we are interested in the following subspace of $A$:
\[W_{\ell}=\Span\{a_{1}\cdots a_{\ell}:a_{1}\cdots a_{\ell} \textnormal{ is an alternating centered word in $(A_{1},\cdots,A_{m})$}\}.\]
We define, by convention, $W_{0}=\bC 1$.

For  Hilbert spaces $\cH,\cK$, and $\xi\in \cK,\eta\in \cH$, we define $\omega_{\xi,\eta}\in B(\cH,\cK)$ by $\omega_{\xi,\eta}(\zeta)=\ip{\zeta,\eta}\xi$.
For an integer $0\leq r\leq \ell$, we define $s_{r}\colon W_{\ell}\to B(\cH)$ and $t_{r}\colon W_{\ell}\to B(\cH)\otimes_{\min}A$ to be the unique linear maps which satisfy:
\[s_{r}(a_{1}\cdots a_{\ell})=\omega_{a_{1}\cdots a_{r},a_{r+1}\cdots a_{\ell}},\]
\[t_{r}(a_{1}\cdots a_{\ell})=\omega_{a_{1}\cdots a_{r-1},a_{r+1}\cdots a_{\ell}}\otimes a_{r},\]
whenever $a_{1}\cdots a_{\ell}$ is an alternating centered word
in $(A_{1},\cdots,A_{m})$. Note that $W_{\ell}$ is isomorphic to the algebraic direct sum of $(A_{i_{1}}\ominus \bC 1)\otimes_{\textnormal{alg}}(A_{i_{2}}\ominus \bC 1)\otimes_{\textnormal{alg}}\cdots \otimes (A_{i_{\ell}}\ominus \bC 1)$ over all $i_{1},\cdots,i_{\ell}\in [m]$ with $i_{1}\ne i_{2},i_{2}\ne i_{3},\cdots$. Hence there are indeed unique linear maps on $W_{\ell}$ satisfying the above formulae.
For $x\in W_{\ell}$ we set
\[\kh(x)=\max_{0\leq r\leq \ell}(\max(\|s_{r}(x)\|,\|t_{r}(x)\|)).\]
The theorem of Ricard-Xu we need is the following.
\begin{thm}[Theorem 2.5 of \cite{KhinRX}]\label{thm:RX}
There are absolute constants $c,\beta>0$ such that
\[(c(\ell+1)^{\beta})^{-1} \kh(x)\leq \|x\|\leq 2(\ell+1)\kh(x)\]
for all $x\in W_{\ell}$.
\end{thm}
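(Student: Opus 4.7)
I would view $x\in W_\ell$ as the left-multiplication operator on the free-product Hilbert space $\cH = L^2(A,\rho)$ and analyse its action on the usual orthogonal decomposition of $\cH$ into alternating tensors $H_{j(1)}\otimes\cdots \otimes H_{j(k)}$, where $H_i = L^2(A_i,\rho_i)\ominus \bC\Omega_i$ and $j$ ranges over alternating functions $[k]\to[m]$. When an alternating centered word $a_1\cdots a_\ell$ acts on such a tensor $\xi$, the free-product multiplication rule produces terms indexed by a ``cancellation depth'' $r\in\{0,1,\ldots,\ell\}$ at the junction: pure concatenation ($r=\ell$), full cancellation of the tail $a_{r+1}\cdots a_\ell$ against the head of $\xi$ ($0\leq r<\ell$), or partial collision at position $r$ (with $1\leq r\leq\ell$, where $a_r$ and the first letter of $\xi$ sit in the same $A_i$ but do not cancel). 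This is exactly the data encoded by the $s_r$ and $t_r$ maps.

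\textbf{Upper bound.} Linearly extending this decomposition, I would show that left multiplication by $x$ splits as $L_x = \sum_{r=0}^{\ell} S_r(x) + \sum_{r=1}^{\ell} T_r(x)$, where $S_r(x)$ and $T_r(x)$ are operators on $\cH$ built from $s_r(x)$ and $t_r(x)$ by tensoring with identities on the uncancelled parts of input and output tensors. A direct calculation shows $\|S_r(x)\|=\|s_r(x)\|$ and $\|T_r(x)\|=\|t_r(x)\|$, since the relevant actions decouple into orthogonal summands of $\cH$ once $r$ is fixed and the colour patterns are accounted for. The triangle inequality over the $2\ell+1$ summands then gives $\|x\|=\|L_x\|\leq (2\ell+1)\kh(x)\leq 2(\ell+1)\kh(x)$.

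\textbf{Lower bound.} For the reverse direction one must bound each $\|s_r(x)\|$ and $\|t_r(x)\|$ by a polynomial in $\ell$ times $\|x\|$. Matrix coefficients of $s_r(x)$ can in principle be read off from matrix coefficients of $L_x$ against pairs of alternating centered tensors of lengths $r$ and $\ell-r$, but extracting the full operator norm requires handling all colour assignments at the junction simultaneously. The Ricard--Xu strategy recasts this as a noncommutative Khintchine-type inequality on free Fock space: $x$ is expanded into ``row'' and ``column'' pieces corresponding to the $s_r$ and $t_r$, and Haagerup--Pisier--style vector-valued $L^p$ techniques, combined with a free Rosenthal inequality, compare the norm of the sum to the sum of the row/column norms up to a universal polynomial factor $(\ell+1)^\beta$.

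\textbf{Main obstacle.} The substantive difficulty is entirely in the lower bound: a naive summation over alternating colour patterns at the junction would yield an exponential-in-$\ell$ loss, so the proof genuinely needs the operator-algebraic Khintchine input to confine the loss to polynomial size. Since the optimal constant $\beta$ and the slickest derivation depend on a careful interplay between full-Fock-space creation/annihilation operators and the free-product model, I would import this bound as a black box from \cite{KhinRX}, as the paper implicitly does.
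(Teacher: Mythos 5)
The paper gives no proof of this statement---it is imported verbatim as Theorem 2.5 of \cite{KhinRX} (following the presentation in \cite{PisierRDP})---so your proposal, which sketches the standard decomposition of $L_x$ into the $2\ell+1$ pieces governed by $s_r$ and $t_r$ to get the upper bound by the triangle inequality and then defers the polynomially-lossy lower bound to Ricard--Xu as a black box, matches the paper's treatment and is consistent with how the result is actually established in the literature. The only minor imprecision is your claim that $\|S_r(x)\|=\|s_r(x)\|$ and $\|T_r(x)\|=\|t_r(x)\|$ exactly; the colour constraints at the junction restrict the domains of these pieces, so in general one only gets $\|S_r(x)\|\leq\|s_r(x)\|$ and $\|T_r(x)\|\leq\|t_r(x)\|$, which is however precisely the direction needed for the upper bound.
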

Our main purpose in this section is to establish a rapid decay property for free products. Thus we wish to bound the operator norm of an element in $W_{\ell}$ by its $2$-norm. For this, let us first note that $s_{r}$ induces an isometry between the closure of $W_{\ell}$ in $L^{2}(A,\rho)$ and $\HS(\cH)$. Moreover, the $W_{\ell}$ are orthogonal with respect to $\rho$ for distinct values of $\ell$ (this can be either be seen inductively using the defining alternating moment condition for free products, or realize that it follows automatically from Voiculescu's construction of the GNS Hilbert space for $\cH$). In particular,
\[\|s_{r}(x)\|\leq \|s_{r}(x)\|_{HS}=\|x\|_{2}\]
for all $x\in W_{\ell}$ and all $0\leq r\leq \ell$. 

\subsection{Proof of the rapid decay property for free products}

We use the following elementary generalization of the inequality between the operator and Hilbert-Schmidt norms.

\begin{lem}\label{lem: weak NC CS}
Let $\cH_{1},\cH_{2},\cK_{1},\cK_{2}$ be Hilbert spaces and let $(\xi_{i})_{i\in I},(\eta_{j})_{j\in J}$ be orthonormal systems in $\cK_{1},\cH_{1}$ respectively (not necessarily orthonormal bases). Let $F\subseteq I\times J$ be finite and suppose that for $(i,j)\in F$ we are given $T_{ij}\in B(\cH_{2},\cK_{2})$.  Then
\[\left\|\sum_{(i,j)\in F}\omega_{\xi_{i},\eta_{j}}\otimes T_{ij}\right\|\leq \left(\sum_{(i,j)\in F}\|T_{ij}\|^{2}\right)^{1/2}.\]

\end{lem}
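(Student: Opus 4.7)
The plan is to reduce the estimate to two uses of Bessel/Cauchy–Schwarz by carefully unpacking how the operator $S=\sum_{(i,j)\in F}\omega_{\xi_i,\eta_j}\otimes T_{ij}$ acts on a test vector in $\cH_1\otimes\cH_2$. Fix $v\in\cH_1\otimes\cH_2$. For each $j\in J$, the Riesz representation theorem produces a unique $a_j\in\cH_2$ such that $\langle a_j,h\rangle_{\cH_2}=\langle v,\eta_j\otimes h\rangle_{\cH_1\otimes\cH_2}$ for all $h\in\cH_2$. Since $(\eta_j)_{j\in J}$ is orthonormal in $\cH_1$, a standard Bessel-type argument (check it on finite linear combinations of simple tensors and pass to the limit) yields $\sum_{j\in J}\|a_j\|^2\leq \|v\|^2$.

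Next I would verify on simple tensors $v=x\otimes y$ that $(\omega_{\xi_i,\eta_j}\otimes T_{ij})(x\otimes y)=\langle x,\eta_j\rangle\,\xi_i\otimes T_{ij}(y)=\xi_i\otimes T_{ij}(a_j)$, and then extend by linearity/continuity to obtain the formula
\[
Sv=\sum_{(i,j)\in F}\xi_i\otimes T_{ij}(a_j)=\sum_{i\in I_F}\xi_i\otimes\Bigl(\sum_{j:(i,j)\in F}T_{ij}(a_j)\Bigr),
\]
where $I_F=\{i:\exists j,\,(i,j)\in F\}$. Because $(\xi_i)_{i\in I}$ is orthonormal in $\cK_1$, the vectors $\xi_i\otimes w_i$ are pairwise orthogonal in $\cK_1\otimes\cK_2$, so
\[
\|Sv\|^2=\sum_{i\in I_F}\Bigl\|\sum_{j:(i,j)\in F}T_{ij}(a_j)\Bigr\|_{\cK_2}^2.
\]

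Now I would apply the triangle inequality followed by Cauchy–Schwarz in the scalar variable:
\[
\Bigl\|\sum_{j:(i,j)\in F}T_{ij}(a_j)\Bigr\|\leq \sum_{j:(i,j)\in F}\|T_{ij}\|\,\|a_j\|\leq \Bigl(\sum_{j:(i,j)\in F}\|T_{ij}\|^2\Bigr)^{1/2}\Bigl(\sum_{j:(i,j)\in F}\|a_j\|^2\Bigr)^{1/2}.
\]
Squaring, summing over $i$, and bounding $\sum_{j:(i,j)\in F}\|a_j\|^2\leq \sum_{j\in J}\|a_j\|^2\leq \|v\|^2$ yields
\[
\|Sv\|^2\leq \Bigl(\sum_{(i,j)\in F}\|T_{ij}\|^2\Bigr)\|v\|^2,
\]
which is the desired bound after taking square roots and supremizing over unit $v$.

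The only genuinely delicate step is the passage from simple tensors to arbitrary $v\in\cH_1\otimes\cH_2$ when defining $a_j$ and justifying the identity for $Sv$; this can be handled by noting that both sides are bounded sesquilinear forms in $v$ (for the identity) and by density of algebraic tensors (for the existence/bound of $a_j$). Everything else is the two-step orthogonality/Cauchy–Schwarz estimate outlined above.
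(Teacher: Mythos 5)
Your proof is correct and is essentially the paper's argument: expand $v$ along the orthonormal system $(\eta_j)$, use orthogonality of the $\xi_i$ to split $\|Sv\|^2$ into a sum over $i$, and finish with the triangle inequality plus Cauchy--Schwarz together with Bessel's bound $\sum_j\|a_j\|^2\leq\|v\|^2$. The only cosmetic difference is that the paper first reduces to the case where $(\eta_j)$ is an orthonormal basis of $\cH_1$ (by noting the operator vanishes on the orthogonal complement), whereas you work with the general system via Bessel directly; both routes are fine.
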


\begin{proof}
Set $T=\sum_{(i,j)\in F}\omega_{\xi_{i},\eta_{j}}\otimes T_{ij}$. If $V_{1}$ is the closed linear span of $(\eta_{j})_{j\in J}$, then it is direct check that $T$ vanishes on $V_{1}^{\perp}\otimes \cH_{2}$. Hence we may, and will, assume that $(\eta_{j})_{j\in J}$ is an orthonormal basis for $\cH_{1}$. Given $v\in \cH_{1}\otimes \cH_{2}$, we may thus write $v=\sum_{k}\eta_{k}\otimes v_{k}$ with $v_{j}\in \cH_{2}$. Then:

\begin{align}
\|Tv\|=\left\|\sum_{(i,j)\in F}\xi_{i}\otimes T_{ij}v_{j}\right\| &=\sum_{i}\left\|\sum_{j:(i,j)\in F}T_{ij}v_{j}\right\|^{2}\\
&\leq\sum_{i}\left(\sum_{j:(i,j)\in F}\|T_{ij}\|\|v_{j}\|\right)^{2}\leq \|v\|^{2}\sum_{(i,j)\in F}\|T_{ij}\|^{2},
\end{align}
where in the last step we use Cauchy-Schwarz, plus the fact that $\|v\|^{2}=\sum_{k}\|v_{k}\|^{2}$.
\end{proof}

\begin{lem}\label{lem: defining the filtration for RDP}
Suppose we are given $m\in \bN$ and  $C^{*}$-probability spaces $(A_{j},\rho_{j}),j\in [m]$. Let $(V_{n,j})_{n=0}^{\infty}$ be filtrations of $A_{j}$, $j\in [m]$. Let $V_{k}\subseteq A_{1}*A_{2}*\cdots*A_{m}$ be the span of all alternating centered words in $(V_{1},\cdots,V_{m})$ of length at most $k$. 
Then 
\begin{enumerate}[(i)]
    \item $(V_{n})_{n=0}^{\infty}$ is a filtration of $A_{1}*A_{2}*\cdots*A_{m}$, \label{item: free filtration}
    \item $V_{n}$ contains the span of all words of the form $a_{1}\cdots a_{\ell}$ such that  there is a function $k\colon [\ell]\to \bN$ with $\sum_{i}k(i)\leq n$ and a function $j\colon [\ell]\to [m]$ (not assumed to be alternating) with $a_{i}\in V_{k(i),j(i)}$. \label{item: free degree terms}
\end{enumerate}
\end{lem}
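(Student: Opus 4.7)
The plan is to interpret $V_n$ as the span of the empty product $1$ together with all alternating centered words $a_1 \cdots a_\ell$ with $a_i \in V_{k(i), j(i)}$ for some alternating $j\colon [\ell]\to [m]$ satisfying $\rho_{j(i)}(a_i)=0$ and $\sum_i k(i) \leq n$ (so that ``length at most $k$'' is read as ``total letter-degree at most $k$''). I will first establish (ii) by a nested induction, from which (i) will follow as an essentially formal consequence.

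For (ii), the plan is a strong induction on the word length $\ell$, with a secondary induction (in the alternating-$j$ case) on the number $r$ of non-centered letters. The base $\ell=0$ is trivial since $1 \in V_0 \subseteq V_n$. For the inductive step, if $j\colon [\ell]\to [m]$ fails to be alternating at some index $i$, I would combine $a_i a_{i+1} \in V_{k(i)+k(i+1),j(i)}$ using the filtration property of $V_{\cdot, j(i)}$, reducing to a word of length $\ell-1$ with the same total-degree bound. If $j$ is alternating and every $a_i$ is centered, the word lies in $V_n$ by definition. Otherwise $j$ is alternating and some $a_i$ is non-centered; write $a_i = c_i + \rho_{j(i)}(a_i)\cdot 1$ with $c_i \in V_{k(i),j(i)}$ centered. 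This expresses the original word as a word with $a_i$ replaced by $c_i$ (same $\ell$ and $j$, one fewer non-centered letter, handled by the inner induction) plus a scalar multiple of the word with $a_i$ deleted (length $\ell-1$, total degree $\leq n - k(i) \leq n$, handled by the outer induction).

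Statement (i) then follows quickly. The identity $V_0 = \bC\cdot 1$ holds because a centered letter of $V_{\cdot,j}$-degree zero must be a scalar in $\bC\cdot 1$ with state zero, hence vanishes; so the only nonzero alternating centered word of total degree zero is the empty product. Stability under $\ast$ is immediate since reversing an alternating centered word and applying $\ast$ yields another alternating centered word of the same total degree, using the $\ast$-stability of each $V_{k,j}$. The multiplicative inclusion $V_n V_k \subseteq V_{n+k}$ follows from (ii): the concatenation of two alternating centered words of total degrees $\leq n$ and $\leq k$ is a single (possibly non-alternating, non-centered) word of total degree $\leq n+k$, which (ii) places in $V_{n+k}$. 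Density of $\bigcup_n V_n$ is inherited from the density of $\bigcup_k V_{k,j}$ in each $A_j$: any alternating word in the $A_j$'s can be norm-approximated by words whose letters lie in these dense $\ast$-subalgebras, and each such approximant lies in some $V_n$ by (ii).

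The main obstacle is ensuring well-foundedness of the nested induction in (ii); this is resolved by the lexicographic order on $(\ell, r)$, since the non-alternating reduction strictly decreases $\ell$, while the centering-and-splitting step produces one summand with strictly smaller $r$ (and same $\ell$) and one of strictly smaller $\ell$. Beyond this combinatorial bookkeeping, the argument uses only the filtration axioms on each $V_{\cdot, j}$ and the standard structure of the reduced free product.
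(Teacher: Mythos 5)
Your combinatorial engine---merge adjacent letters from the same algebra, recenter, split off the scalar, and induct on the pair (length, number of non-centered letters)---is exactly the one the paper uses, and your argument for (ii) is sound: after the reductions you land in alternating centered words with at most $n$ letters, each letter in some $V_{k',j}\ominus\bC 1$ with $k'\leq n$, and these do lie in the $V_n$ of the statement. The problem is your opening move of reinterpreting the definition. In the paper, $V_k$ is the span of alternating centered words with \emph{at most $k$ letters, each letter drawn from $V_{k,j}\ominus\bC 1$} (in the proof, $V_k=W_{k,k}$ with $\Omega_{\ell,k}$ the alternating products of length $\ell$ in $(V_{k,1}\ominus\bC 1,\ldots)$); the total letter-degree of such a word can be as large as $k^2$. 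Your $V_n$ (total letter-degree at most $n$) is a strictly smaller subspace in general: in the length-two layer, if $a\in V_{n,1}\ominus\bC 1$ is $L^2$-orthogonal to $V_{n-1,1}$ and $b\in V_{n,2}\ominus\bC 1$ is nonzero, the word $ab$ lies in the paper's $V_n$ but not in yours, since any total-degree-$\leq n$ representation of a length-two word forces one factor to have degree $\leq 0$.

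This matters precisely where your proof leans on the reinterpretation: the derivation of (i) from (ii). For the $V_n$ of the statement, the concatenation of a word from $V_n$ with a word from $V_k$ has total letter-degree up to $n^2+k^2$, so (ii) at level $n+k$ does not place the product in $V_{n+k}$, and your argument for $V_nV_k\subseteq V_{n+k}$ collapses. The paper avoids this by proving the product inclusion directly: it shows $\Omega_{\ell,k}\Omega_{r,n}\subseteq W_{\ell+r,k+n}$ by induction on $\ell$, tracking word length and letter-degree as two \emph{separate}, separately additive parameters (the merged letter $a_\ell b_1$ lands in $V_{k+n,j}$, and the length drops by one or two). Your induction would close the gap if run with this two-parameter bookkeeping in place of the single total-degree parameter; as written, part (i) for the stated filtration---which is what Theorem \ref{thm: RDP for free products} and the arguments of Sections \ref{Avitzour section} and \ref{sec: here is where the fun begins} actually use---is not established. (Your (ii), by contrast, does imply the paper's (ii), since your $V_n$ is contained in theirs.)
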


\begin{proof}

The fact that $V_{n}$ is $*$-closed is direct, so we focus on showing that $V_{n}V_{k}\subseteq V_{n+k}$.

Note that (\ref{item: free degree terms}) follows once we show that $V_{k}V_{n}\subseteq V_{n+k}$ for all $n,k\in \bN\cup\{0\}$. 
 Indeed, if $B$ is the $*$-algebra in $A_{1}*A_{2}*\cdots*A_{m}$ generated by $\bigcup_{j=1}^{m}\bigcup_{n}V_{n,j}$ then $B$ is the span of all words of the form $a_{1} \cdots a_{\ell}$ with $a_{i}\in V_{k(i),j(i)}$ for some functions $k\colon [\ell]\to \bN$ and $j\colon [\ell]\to [m]$. Setting $K=\sum_{i}k(i)$, we have that $a_{1}\cdots a_{\ell}\in V_{K}$ if $V_{k}V_{n}\subseteq V_{n+k}$ for all $n,k\in \bN\cup\{0\}$.   
 Note further that (\ref{item: free degree terms}) also implies that $\bigcup_{n}V_{n}$ is dense in $A$.
 
 So it suffices to show that $V_{n}V_{k}\subseteq V_{n+k}$ for all $n,k\in \bN\cup\{0\}$.
 To prove this, for $\ell,k\in \bN\cup\{0\},$ let $\Omega_{\ell,k}$ be the set of all alternating products in $(V_{k,1}\ominus \bC1,V_{k,2}\ominus \bC 1)$ of length $\ell$. Set $W_{\ell,k}=\sum_{p=0}^{\ell}\Span(\Omega_{\ell,k})$, so that $V_{k}=W_{k,k}$. It then suffices to show that $W_{\ell,k}W_{r,n}\subseteq W_{\ell+r,k+n}$ for all integers $\ell,r,k,n\geq 0$. By linearity, it suffices to show that $\Omega_{\ell,k}\Omega_{r,n}\subseteq W_{\ell+k,r+n}$ for all integers $\ell,k,r,n\geq 0.$

We prove by induction on $\ell\geq 0$ that for all $r,k,n\in \bN\cup\{0\}$ we have $\Omega_{\ell,k}\Omega_{r,n}\subseteq W_{\ell+r,k+n}$. The base case $\ell=0$ is direct.

For the inductive step assume that $\ell\geq 1$ and the desired result is true for all $0\leq p\leq \ell-1$. Note that $\Omega_{\ell,k}\Omega_{0,n}=\Omega_{\ell,k}$ for all $n\geq 1$, and that $\Omega_{\ell,k}\Omega_{r,0}=\{0\}$ for all $r\geq 0$. Similarly $\Omega_{\ell,0}\Omega_{r,n}=\{0\}$ for all $r,n\geq 0$. So it suffices to show that if $r,k,n\in \bN$, then $\Omega_{\ell,k}\Omega_{r,n}\subseteq W_{\ell+r,k+n}$. Fix alternating functions $j\colon [\ell]\to[m]$ and $f\colon [r]\to [m]$, and let $a_{i}\in V_{k,j(i)}\ominus \bC 1$, $b_{s}\in V_{k,f(s)}\ominus \bC 1$ are given. If $j(\ell)\ne f(1)$, then $a_{1}\cdots a_{\ell}b_{1}\cdots b_{r}$ is an alternating product in $\Omega_{\ell+r,\max(k,n)}\subseteq \Omega_{\ell+r,k+n}$. 

If $j(\ell)=f(1)$, let $x=a_{\ell}b_{1}-\rho_{j(\ell)}(a_{\ell}b_{1})\in V_{k+n,j(\ell)}\ominus \bC 1$. Set $\widetilde{a}=a_{1}\cdots a_{\ell-1}$ and $\widetilde{b}=b_{2}\cdots b_{r-1}$. Then:
\[a_{1}\cdots a_{\ell}b_{1}\cdots b_{r}=\rho_{j(\ell)}(a_{\ell}b_{1})\widetilde{a}\widetilde{b}+\widetilde{a}x\widetilde{b}.\]
Since $\widetilde{a}\in \Omega_{\ell-1,k},\widetilde{b}\in \Omega_{r-1,n}$ and $r\geq 1$, the first term is in $W_{\ell+r-2,k+n}\subseteq W_{\ell,k+n}$, by inductive hypothesis. 
The second term is an alternating product which is in $\Omega_{\ell+r,k+n}$. Thus $a_{1}\cdots a_{\ell}b_{1}\cdots b_{r}\in W_{\ell+r,k+n}$.

\end{proof}

\begin{lem}\label{lem: RDP estimate on layers for free prod}
Suppose that $(A_{j},\rho_{j})$, $j\in [m]$ are  $C^{*}$-probability spaces. 
Suppose we are given linear subspaces $E_{j}\subseteq A_{j}\ominus \bC 1$ for $j\in [m]$. 
For $\ell\in \bN$ set
\[W_{\ell}=\Span\{a_{1}\cdots a_{\ell}:a_{i}\in E_{j(j)} \textnormal{ where $j\colon [\ell]\to [m]$ is alternating}\}.\]
Assume that 
\[\|x\|\leq C_{j}\|x\|_{2}\]
for all $x\in E_{j}$.
Then for $y\in W_{\ell}$ we have
\[\|y\|\leq 2\sqrt{m} (\ell+1)\left(\max_{j}C_{j}\right)\|y\|_{2}.\]
\end{lem}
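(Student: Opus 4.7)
The plan is to reduce to the Ricard--Xu Khintchine-type inequality of Theorem \ref{thm:RX}. Since $\|y\|\le 2(\ell+1)\kh(y)$ for $y\in W_{\ell}$ by that theorem, it suffices to establish
\[
\kh(y)\le\sqrt{m}\,\bigl(\max_j C_j\bigr)\,\|y\|_{2}.
\]
The $s_r$ estimates come for free from the observation made just after Theorem \ref{thm:RX}: $s_r$ extends to an isometry from the $\|\cdot\|_{2}$-closure of $W_{\ell}$ onto a subspace of $\HS(\cH)$, so $\|s_r(y)\|\le\|s_r(y)\|_{\HS}=\|y\|_{2}$. The substance of the argument is therefore to bound $\|t_r(y)\|$ for each $1\le r\le\ell$.

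To set up the bookkeeping, I first note that since $y\in W_{\ell}$ is a finite sum of alternating products, only finitely many letters from each $E_j$ are involved, so I may replace each $E_j$ by the finite-dimensional subspace they span; the hypothesis $\|x\|\le C_j\|x\|_{2}$ is inherited. Choose a $\|\cdot\|_{2}$-orthonormal basis $(e_{j,s})_{s\in I_j}$ of each such subspace. By Voiculescu's construction of the free-product Hilbert space, the products $e_{j(1),s_1}\cdots e_{j(\ell),s_\ell}$, indexed by alternating functions $j\colon[\ell]\to[m]$ and admissible tuples $\vec s$, form an orthonormal system in $L^{2}(A,\rho)$. Expanding $y$ in this basis yields a finite linear combination with scalar coefficients $c_{j,\vec s}$, and $\|y\|_{2}^{2}=\sum_{j,\vec s}|c_{j,\vec s}|^{2}$.

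Fix $r\in\{1,\dots,\ell\}$ and group the expansion according to $k=j(r)$, writing $y=\sum_{k=1}^{m}y_{(r=k)}$; by orthogonality of the basis, $\|y\|_{2}^{2}=\sum_{k}\|y_{(r=k)}\|_{2}^{2}$. Peeling off the $r$-th letter, I can rewrite
\[
t_r\bigl(y_{(r=k)}\bigr)=\sum_{\vec s_L,\vec s_R}\omega_{\xi_{\vec s_L},\eta_{\vec s_R}}\otimes T^{k}_{\vec s_L,\vec s_R},
\]
where $\{\xi_{\vec s_L}\}$ and $\{\eta_{\vec s_R}\}$ are orthonormal systems in $\cH$ obtained from the left and right halves of the words (again via free-product orthogonality), and $T^{k}_{\vec s_L,\vec s_R}\in E_k$. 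Lemma \ref{lem: weak NC CS} then applies to give
\[
\|t_r(y_{(r=k)})\|^{2}\le\sum_{\vec s_L,\vec s_R}\|T^{k}_{\vec s_L,\vec s_R}\|^{2}\le C_k^{2}\sum_{\vec s_L,\vec s_R}\|T^{k}_{\vec s_L,\vec s_R}\|_{2}^{2}=C_k^{2}\|y_{(r=k)}\|_{2}^{2},
\]
the middle step using the hypothesis on $E_k$ and the final equality following from orthonormality of the expanding basis of the ``outer'' halves.

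Finally, by the triangle inequality and Cauchy--Schwarz,
\[
\|t_r(y)\|\le\sum_{k=1}^{m}C_k\|y_{(r=k)}\|_{2}\le\sqrt{m}\bigl(\max_k C_k\bigr)\|y\|_{2}.
\]
Combining the $s_r$ and $t_r$ estimates shows $\kh(y)\le\sqrt{m}(\max_j C_j)\|y\|_{2}$, and Theorem \ref{thm:RX} delivers the claimed bound $\|y\|\le 2\sqrt{m}(\ell+1)(\max_j C_j)\|y\|_{2}$. The only subtle step is organizing $t_r(y)$ into the exact form demanded by Lemma \ref{lem: weak NC CS}; this hinges on the free-product orthogonality of alternating products when expressed in the chosen orthonormal bases, and on grouping by $k=j(r)$ so that the factor $\sqrt{m}$ enters through Cauchy--Schwarz rather than a naive $m$-term sum.
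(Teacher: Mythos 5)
Your proposal is correct and follows essentially the same route as the paper: both bound $\kh(y)$ by $\sqrt{m}\,(\max_j C_j)\|y\|_2$ using the orthonormality of alternating centered words, Lemma \ref{lem: weak NC CS} for the $t_r$ maps, the $L^2$--$L^\infty$ hypothesis on each $E_j$, and a Cauchy--Schwarz over the $m$ algebras to produce the $\sqrt m$, before invoking Theorem \ref{thm:RX}. The only difference is cosmetic: you split $y$ by the algebra of the $r$-th letter and apply Lemma \ref{lem: weak NC CS} once per value of $k=j(r)$, whereas the paper applies it once to all of $t_r(y)$ with the middle operators $T_{I,\widetilde I}$ already summed over the possible middle algebras; the resulting estimates are identical.
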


\begin{proof}
By Lemma \ref{lem: passing to HS closure} we may, and will, assume that $E_{j}$ is complete under $\|\cdot\|_{2}$. 
We first set up some notation.
Let $(e_{i}^{(j)})_{i\in I_{j}}$ for $j=1,2$ be orthonormal bases for $E_{j}$. 
Given an integer $p\in \bN$, we let
\[\cB_{p}=\{(i_{1},\cdots,i_{p}):i_{s}\in I_{j(s)} \textnormal{ and } j\colon [p]\to [m] \textnormal{ is alternating}\}.\]
For $I=(i_{1},\cdots,i_{p})\in \cB_{p}$, with $i_{s}\in I_{j(s)}$, let $e_{I}=e_{i_{1}}^{(j(1))}e_{i_{2}}^{(j(2))}\cdots e_{i_{l}}^{(j(\ell))}$.  For $0\leq r\leq \ell$ and $I\in \cB_{\ell}$, we set $I_{<r}=(i_{1},\cdots,i_{r-1}),I_{>r}=(i_{r+1},\cdots,i_{l})$.

Let $y\in W_{\ell}$, and write $y=\sum_{\widehat{I}\in \cB_{l}}\lambda_{\widehat{I}}e_{\widehat{I}}$.  For $I=(i_{1},\cdots,i_{r-1})\in \cB_{r-1}$ with $i_{r-1}\in I_{j(r-1)}$  and $\widetilde{I}=(\widetilde{i}_{1},\cdots,\widetilde{i}_{\ell-r})\in \cB_{\ell-r}$ with $\widetilde{i}_{1}\in I_{\widetilde{j}(1)}$, set 
\[T_{I,\widetilde{I}}=\sum_{j\in [m]\setminus \{j(r-1),\widetilde{j}(1)\}}\sum_{i\in I_{j}}\lambda_{I,i,\widetilde{I}}e_{i}.\] 
Then:
\[t_{r}(y)=\sum_{(I,\widetilde{I})\in \cB_{r-1}\times \cB_{\ell-r}}\omega_{e_{I},e_{\widetilde{I}}}\otimes T_{I,\widetilde{I}}.\]
The elements $e_{I},e_{\widetilde{I}}$ may not be orthogonal to each other. However, it is still true that both systems $(e_{I})_{I\in \cB_{r-1}}$, $(e_{\widetilde{I}})_{\widetilde{I}\in \cB_{l-r}}$ are individually orthonormal systems. We can thus apply Lemma \ref{lem: weak NC CS} to see that
\begin{equation}\label{eqn:CS number 1}
\|t_{r}(y)\|^{2}\leq \sum_{(I,\widetilde{I})\in \cB_{r-1}\times \cB_{l-r}}\|T_{I,\widetilde{I}}\|^{2}.
\end{equation}
By the assumed $L^{2}-L^{\infty}$ estimates:
\begin{align*}
\|T_{I,\widetilde{I}}\|&\leq \sum_{j\in [m]\setminus\{j(r-1),\widetilde{j}(1)}\left\|\sum_{i\in I_{j}}\lambda_{I.i,\widetilde{I}} e_{i}\right\|\\
&\leq \left(\max_{j} C_{j}\right)\sum_{j\in [m]\setminus\{j(r-1),\widetilde{j}(1)}\left(\sum_{i\in I_{j}}|\lambda_{I.i,\widetilde{I}}|^{2}\right)^{1/2}\\
&\leq  \sqrt{m}\left(\max_{j} C_{j}\right)\left(\sum_{j\in [m]\setminus \{j(r-1),\widetilde{j}(1)\}}\sum_{i\in I_{j}}|\lambda_{I,i,\widetilde{I}}|^{2}\right)^{1/2}.
\end{align*}
Thus 
\begin{align}\label{eqn:CS number 2}
\left(\sum_{(I,\widetilde{I})\in \cB_{r-1}\times \cB_{\ell-r}}\|T_{I,\widetilde{I}}\|^{2}\right)^{1/2}
&\leq\sqrt{m}\left(\max_{j}C_{j}\right) \left(\sum_{I\in \cB_{\ell}}|\lambda_{\widehat{I}}|^{2}\right)^{1/2}\\
&=\sqrt{m}\left(\max_{j}C_{j}\right)\|y\|_{2}.\nonumber
\end{align}

Plugging (\ref{eqn:CS number 1}) into (\ref{eqn:CS number 2}) we see that:
\[\|t_{r}(y)\|\leq \sqrt{m}\left(\max_{j}C_{j}\right)\|y\|_{2}.\]
As note before, we have that 
\[\|s_{r}(y)\|\leq \|s_{r}(y)\|_{HS}=\|y\|_{2}.\]
Since the above two estimates hold for all $r$, we may apply Theorem \ref{thm:RX} to see that 
\[
\|y\|\leq 2(\ell+1)\kh(a)\leq 2(\ell+1)\left(\max_{j}C_{j}\right)\|y\|_{2}.
\]
\end{proof}

\begin{thm}\label{thm: RDP for free products}
Let $(A_{j},\rho_{j}),j\in [m]$ be  unital $C^{*}$-probability spaces and let $(V_{n,j})_{n=0}^{\infty}$ be filtrations of $A_{j}$, $j\in [m]$. Let $(V_{n})_{n=0}^{\infty}$ be the filtration of $(A_{1},\rho_{1})*\cdots*(A_{m},\rho_{m})$ defined in Lemma \ref{lem: defining the filtration for RDP}. If each $(V_{n,j})_{n=0}^{\infty}$ has the rapid decay property for $j\in [m]$, then so does $(V_{n})_{n=0}^{\infty}$. 
    
\end{thm}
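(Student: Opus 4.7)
My plan is to decompose elements of $V_n$ into homogeneous ``layers'' indexed by the length of alternating centered words, apply Lemma \ref{lem: RDP estimate on layers for free prod} to each layer separately, and then sum up using Cauchy--Schwarz. Concretely, given $x\in V_n$, by Lemma \ref{lem: defining the filtration for RDP} I can write
\[
x=\sum_{\ell=0}^{n}x_{\ell},
\]
where $x_0\in \bC\cdot 1$ and, for $\ell\geq 1$, $x_\ell$ lies in the span of alternating centered words of length exactly $\ell$ with factors from $E_{n,j}:=V_{n,j}\ominus \bC1$ for various $j\in[m]$. A key ingredient, noted just after Theorem \ref{thm:RX}, is that the spaces $W_\ell$ spanned by alternating centered words of length $\ell$ in $(A_1,\ldots,A_m)$ are mutually orthogonal in $L^{2}(\rho_1*\cdots*\rho_m)$ for distinct $\ell$. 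Consequently the layers $x_\ell$ are pairwise orthogonal, so $\|x\|_2^2=\sum_{\ell=0}^n \|x_\ell\|_2^2$, and in particular $\|x_\ell\|_2\leq \|x\|_2$ for each $\ell$.

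By the rapid decay hypothesis, choose constants $c_j,\alpha_j>0$ with $\|y\|\leq c_j(n+1)^{\alpha_j}\|y\|_2$ for all $y\in V_{n,j}$; since $E_{n,j}\subseteq V_{n,j}$, the same estimate holds for $y\in E_{n,j}$. Setting $\alpha=\max_j\alpha_j$ and $c=\max_j c_j$, Lemma \ref{lem: RDP estimate on layers for free prod} applied with the subspaces $E_{n,j}$ and the constants $C_j=c_j(n+1)^{\alpha_j}$ yields
\[
\|x_\ell\|\;\leq\; 2\sqrt{m}\,(\ell+1)\,c(n+1)^{\alpha}\,\|x_\ell\|_2
\]
for all $\ell\geq 1$, while $\|x_0\|=\|x_0\|_2$.

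Finally, I would sum the estimates and use Cauchy--Schwarz:
\[
\|x\|\;\leq\;\sum_{\ell=0}^{n}\|x_\ell\|\;\lesssim_m\;(n+1)^{\alpha}\sum_{\ell=0}^{n}(\ell+1)\|x_\ell\|_2\;\leq\;(n+1)^{\alpha}\Big(\sum_{\ell=0}^{n}(\ell+1)^2\Big)^{1/2}\Big(\sum_{\ell=0}^{n}\|x_\ell\|_2^2\Big)^{1/2},
\]
and the right-hand side is $\lesssim_m (n+1)^{\alpha+3/2}\|x\|_2$. This establishes the rapid decay property of $(V_n)_{n=0}^{\infty}$ with exponent $\alpha+\tfrac32$.

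I do not anticipate a serious obstacle: all of the analytic difficulty—the Ricard--Xu Khintchine inequality, the operator/Hilbert--Schmidt control of $s_r$ and $t_r$, and the ``weak noncommutative Cauchy--Schwarz'' of Lemma \ref{lem: weak NC CS}—has already been absorbed into Lemma \ref{lem: RDP estimate on layers for free prod}. The only thing to be slightly careful about is the orthogonality of distinct-length layers (which is built into Voiculescu's description of the free product Hilbert space) and the fact that the estimate of Lemma \ref{lem: RDP estimate on layers for free prod} requires factors in $A_j\ominus\bC1$, not merely in $V_{n,j}$; that is why the $\ell=0$ layer must be separated out.
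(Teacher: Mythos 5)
Your proposal is correct and follows essentially the same route as the paper's proof: decompose $x\in V_n$ into the pairwise orthogonal length-$\ell$ layers, apply Lemma \ref{lem: RDP estimate on layers for free prod} with $C_j=c_j(n+1)^{\alpha_j}$ to each layer, and sum via Cauchy--Schwarz to obtain the exponent $\alpha+\tfrac32$. Your explicit handling of the $\ell=0$ layer is a minor extra care the paper glosses over, but it changes nothing substantive.
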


\begin{proof}
For $\ell,p\in \bN$, let
\[W_{\ell,p}=\Span\{a_{1}\cdots a_{\ell}:a_{i}\in V_{p,j(i)}\ominus \bC 1, \textnormal{ where $j\colon [\ell]\to [m]$ is alternating} \}.\]
Then, by definition,
\[V_{n}=\sum_{\ell=0}^{n}W_{\ell,n},\]
and the construction of the free product Hilbert space tells us the vector spaces in this sum are pairwise orthogonal.  Given $x\in V_{n}$, write $x=\sum_{\ell=0}^{n}x_{\ell}$ with $x_{\ell}\in W_{\ell,n}$. Then, by Lemma \ref{lem: RDP estimate on layers for free prod}, it follows that 
\begin{align*}
  \|x\|\leq \sum_{\ell=0}^{n}\|x_{\ell}\|&\lesssim \sqrt{m}\sum_{\ell=0}^{n}(\ell+1)(n+1)^{\max(\alpha_{1},\alpha_{2})}\|x_{\ell}\|_{2}\\
  &\leq \sqrt{m}(n+1)^{\max(\alpha_{1},\alpha_{2})}\|x\|_{2}\left(\sum_{\ell=0}^{n}(\ell+1)^{2}\right)^{1/2}\\
  &\leq \sqrt{m}(n+1)^{3/2+\max(\alpha_{1},\alpha_{2})}\|x\|_{2},  
\end{align*}
where in the second-to-last step we use Cauchy-Schwarz and the fact that the terms in the sum $a=\sum_{\ell=0}^{n}a_{\ell}$ are pairwise orthogonal. 
    
\end{proof}

\begin{cor}
Let $(A_{j},\rho_{j})$, $j=1,2$ be unital $C^{*}$-probability spaces, and let $B_{j}\subseteq A_{j}$ be finitely generated tracial $*$-algebra. If each $B_{1},B_{2}$ has the rapid decay property, so does $B_{1}*B_{2}$. In particular, if $x_{j}\in A_{j}^{r_{j}}$ are tuples with the rapid decay property, then $(x_{1},x_{2})$ has the rapid decay property.
\end{cor}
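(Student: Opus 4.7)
My plan is to reduce the corollary to a direct application of Theorem~\ref{thm: RDP for free products}, using Lemma~\ref{lem: defining the filtration for RDP}(\ref{item: free degree terms}) to match the polynomial-degree filtration generated by a combined tuple with the free-product filtration built from the factor filtrations. I would first prove the tuple assertion and then derive the $*$-algebra statement from it, invoking the fact (established in the proposition immediately preceding the corollary) that rapid decay of a finitely generated $*$-algebra is independent of the choice of finite generating tuple.

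To handle the tuple statement, I would start with generating tuples $x_j\in A_j^{r_j}$, so that by hypothesis the natural polynomial filtration $V_{n,j}=\{P(x_j):\deg P\le n\}$ of $C^*(x_j,1)\subseteq A_j$ has the rapid decay property. Applying Theorem~\ref{thm: RDP for free products} to the $C^*$-probability spaces $(C^*(x_j,1),\rho_j|_{C^*(x_j,1)})$ equipped with the filtrations $(V_{n,j})_n$ produces a filtration $(V_n^{\mathrm{fr}})_n$ of the reduced free product $C^*(x_1,1)*C^*(x_2,1)\subseteq A_1*A_2$ with the rapid decay property. The key step is to observe that the natural polynomial filtration $\widetilde V_n=\{P(x_1,x_2):\deg P\le n\}$ of the combined tuple, computed inside $C^*((x_1,x_2),1)\subseteq C^*(x_1,1)*C^*(x_2,1)$, satisfies $\widetilde V_n\subseteq V_n^{\mathrm{fr}}$. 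This is where Lemma~\ref{lem: defining the filtration for RDP}(\ref{item: free degree terms}) enters: any noncommutative monomial in $(x_1,x_2)$ of total degree at most $n$ is a word $a_1\cdots a_\ell$ with each $a_i$ a single variable from $x_1$ or $x_2$ (so $a_i\in V_{1,j(i)}$) and $\ell\le n$, which fits exactly the description in that part of the lemma. Rapid decay then transfers from $V_n^{\mathrm{fr}}$ to $\widetilde V_n$, proving that $(x_1,x_2)$ has rapid decay.

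For the $*$-algebra statement, I would pick finite generating tuples $x_j$ of $B_j$ for $j=1,2$, so that $(x_1,x_2)$ is a finite generating tuple of $B_1*_{\mathrm{alg}}B_2$; the tuple statement then gives rapid decay of $(x_1,x_2)$, and by the tuple-independence proposition this says exactly that $B_1*B_2$ has the rapid decay property. I do not expect any serious obstacle: the nontrivial content, namely rapid decay of the free-product filtration, has already been established in Theorem~\ref{thm: RDP for free products}, and this corollary is purely a translation between filtration-level statements and tuple/$*$-algebra-level formulations. The only point requiring attention is the mild combinatorial identification $\widetilde V_n\subseteq V_n^{\mathrm{fr}}$, and this is handled cleanly by the fact that in Lemma~\ref{lem: defining the filtration for RDP}(\ref{item: free degree terms}) the indexing function $j\colon[\ell]\to[m]$ is not required to be alternating, so no regrouping of consecutive same-algebra variables is needed.
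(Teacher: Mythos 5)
Your proposal is correct and follows essentially the same route as the paper's own proof: both take generating tuples $x_j$ of $B_j$, form the free-product filtration of Lemma \ref{lem: defining the filtration for RDP}, use part (\ref{item: free degree terms}) of that lemma to see that the degree filtration of the combined tuple $(x_1,x_2)$ is contained in it, and then invoke Theorem \ref{thm: RDP for free products}. The only difference is presentational — you spell out the reduction of the $*$-algebra statement to the tuple statement via the tuple-independence proposition, which the paper leaves implicit.
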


\begin{proof}
Let $x_{j}\in B_{j}^{r_{j}}$, $j=1,2$ be tuples which generate $B_{j}$. Set \[V_{n,j}=\{P(x_{j}):P\in \bC^{*}\ip{T_{1},\cdots,T_{r_{j}}} \textnormal{ and $\deg(P)\leq n$}\}.\] 
Let $(V_{n})_{n=0}^{\infty}$ be as in Lemma \ref{lem: defining the filtration for RDP}. Then, by Lemma \ref{lem: defining the filtration for RDP} (\ref{item: free degree terms}) it follows that  $V_{n}$ contains \[\{P(x_{1},x_{2}):P\in \bC^{*}\ip{T_{1},\cdots,T_{r_{1}+r_{r}}} \textnormal{ and $\deg(P)\leq n$}\}.\]
Hence, by Theorem \ref{thm: RDP for free products}, if $P\in\bC^{*}\ip{T_{1},\cdots,T_{r_{1}+r_{r}}}$ has degree at most $n$, then 
\[\|P(x_{1},x_{2})\|\lesssim (n+1)^{\beta}\|P(x_{1},x_{2})\|_{2}\]
for appropriate $\beta>0$. 
\end{proof}

We have shown that reduced free products of $C^*$-algebras with rapid decay  again have rapid decay. 
In the following sections we 
exploit this fact to investigate when reduced free product $C^*$-algebras are selfless.

\section{Selflessness via an Avitzour condition}\label{Avitzour section}

Recall that if $(A,\rho)$ is a $C^{*}$-probability space, then the \emph{centralizer of $\rho$} is, by definition,
\[A^{\rho}=\{x\in A:\rho(xy)=\rho(yx) \textnormal{ for all $y\in A$}\}.\]
The goal of this section is to prove the following, we note the similarity to \cite[Section 3]{avitzour1982free}.\

\begin{thm}\label{thm: the Avitzour approach}
Let $(A_{j},\rho_{j})$, for $j=1,2$,  be $C^{*}$-probability spaces with filtrations $(V_{n,j})_{n=0}^{\infty}$ with the rapid decay property. Suppose there exist unitaries $u\in \bigcup_{n}V_{n,1}$, and $v,w\in \bigcup_{n}V_{n,2}$, such that $v$ lies in  $A_{2}^{\rho_{2}}$, and 
\[
\rho_1(u)=\rho_2(v)=\rho_2(w)=\rho_{2}(v^{*}w)=0. 
\]
Then $(A_{1},\rho)*(A_{2},\rho_{2})$ is selfless.
\end{thm}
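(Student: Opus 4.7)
The plan is to apply the selflessness criterion of Proposition~\ref{wk convergence+ strong semi conv} with $(A,\rho_A)=(A_1*A_2,\rho_1*\rho_2)$ and $(B,\rho_B)=(A_1,\rho_1)$, so that $A*B=A_1*A_2*A_1$ as a three-fold reduced free product. Take $C=A_1*_{\textnormal{alg}}A_2*_{\textnormal{alg}}A_1\subseteq A_1*A_2*A_1$ as the required dense $*$-subalgebra; its intersection with $A$ is $A_1*_{\textnormal{alg}}A_2$, which is dense in $A$. Following the roadmap from the introduction, set
\[
x_n=(wuw)(uv)^n\in\cU(A_1*A_2),
\]
and let $\phi_n\colon C\to A_1*A_2$ be the unique $*$-homomorphism that restricts to the identity on $\iota_1(A_1)$ and $\iota_2(A_2)$ and to conjugation by $x_n^*$ (namely $a\mapsto x_n^*ax_n$) on $\iota_3(A_1)$. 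Because each $\phi_n$ is a $*$-homomorphism, the asymptotic multiplicativity and the identity-on-$C\cap A$ bullets of Proposition~\ref{wk convergence+ strong semi conv} hold automatically.

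For the state-convergence bullet, the plan is to evaluate $(\rho_1*\rho_2)(\phi_n(a))$ on alternating centered words $a$ in $C$. Writing out the conjugations, the spelling of $\phi_n(a)$ interleaves the original syllables of $a$ with blocks coming from $x_n^*$ and $x_n$. The hypotheses $\rho_1(u)=\rho_2(v)=\rho_2(w)=\rho_2(v^*w)=0$ combined with $v\in A_2^{\rho_2}$ drive an Avitzour-style combinatorial reduction: the tracial freedom of $v$ lets it commute past neighboring factors without state correction, and each syllable of $uv$ or $v^*u^*$ pairs against centered pieces in $A_1$ or $A_2$ to produce centered alternating products, forcing the state of $\phi_n(a)$ to match $(\rho_1*\rho_2*\rho_1)(a)=0$ for $n$ larger than the length of $a$. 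This will be packaged as Lemma~\ref{lem: Avitzour argument}. A polarized version of the same combinatorics (Lemma~\ref{item: asy L2 L2 isometry avitzour}) shows that $\phi_n$ is an $L^2$--$L^2$ isometry on the span of alternating centered words of length less than $n$.

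The fourth bullet, asymptotic operator-norm contractivity, is the technical heart and main obstacle. The cleanest route is to show that the family $(\phi_n(a))_n$ is uniformly norm-bounded for each $a\in C$; once this is in hand, the $\phi_n$'s assemble into a single $*$-homomorphism $\phi\colon C\to(A_1*A_2)^\omega$, and the state-convergence and $L^2$-isometry bullets force the induced state on $C$ to be the reduced free product state $\rho_1*\rho_2*\rho_1$. Since that state has faithful GNS, $\phi$ extends to a $*$-homomorphism on the full reduced free product $A_1*A_2*A_1$, which is automatically contractive, yielding $\limsup_n\|\phi_n(a)\|=\|\phi(a)\|\leq\|a\|$.

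It remains to produce the uniform bound $\sup_n\|\phi_n(a)\|<\infty$, and this is where rapid decay enters in an essential way. By Theorem~\ref{thm: RDP for free products}, both $A_1*A_2$ and $A_1*A_2*A_1$ have rapid decay relative to their induced filtrations, and the hypothesis $u,v,w\in\bigcup_nV_{n,j}$ puts these elements in a bounded filtration level. For fixed $a\in V_k$, the combinatorial length of $\phi_n(a)$ as an alternating word in $A_1*A_2$ grows like $kn$, so a naive rapid-decay bound blows up with $n$; the plan is to refine this by decomposing $\phi_n(a)$ into its Ricard--Xu layers $W_\ell\subseteq A_1*A_2$, applying the layer-wise estimate of Lemma~\ref{lem: RDP estimate on layers for free prod} whose constants depend only on the rapid decay exponents of $A_1,A_2$ and on $k$, and using the $L^2$-content control $\sum_\ell\|y_\ell\|_2^2=\|\phi_n(a)\|_2^2=\|a\|_2^2$ provided by the $L^2$-isometry. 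This is the step I expect to be the most delicate: it requires showing that the Khintchine quantities $\kh(y_\ell)$ are controlled not by the growing layer count $\ell\lesssim kn$ but by an $n$-independent polynomial in $k$ times $\|a\|_2$, so that summation over $\ell$ yields the desired uniform operator-norm bound.
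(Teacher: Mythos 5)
Your setup --- the criterion of Proposition \ref{wk convergence+ strong semi conv}, the maps $\phi_n=\phi_{x_n^*}$ with $x_n=(wuw)(uv)^n$, and the Avitzour combinatorics giving asymptotic trace-preservation and the $L^2$--$L^2$ isometry on centered alternating words --- matches the paper exactly. The gap is in the fourth bullet. Your reduction of asymptotic contractivity to mere uniform boundedness is not valid: a $*$-homomorphism $\phi$ from the dense $*$-subalgebra $C$ into $(A_1*A_2)^{\omega}$ satisfying $\rho^{\omega}\circ\phi=(\rho_1*\rho_2*\rho_1)|_C$ does not automatically factor through the reduced free product. Faithfulness of the GNS representation of $\rho_1*\rho_2*\rho_1$ on $A_1*A_2*A_1$ yields the \emph{lower} bound $\|a\|\leq\|\phi(a)\|$ (this is exactly how Proposition \ref{wk convergence+ strong semi conv} proves injectivity), not the upper bound you need; for the upper bound you would need $\rho^{\omega}$ to have faithful GNS representation on $\overline{\phi(C)}$, which is precisely what is at stake. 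A concrete counterexample to the pattern you invoke: the identity on $\bC[\bF_2]$, viewed as a trace-preserving $*$-homomorphism from a dense $*$-subalgebra of $C^*_{\lambda}(\bF_2)$ into $C^*(\bF_2)$, is uniformly bounded and state-preserving but is not contractive for the reduced norm.

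Moreover, the route you sketch toward uniform boundedness (an $n$-independent control of the Khintchine quantities of the Ricard--Xu layers of $\phi_n(a)$) is exactly the step you flag as most delicate, and it is not needed. The paper accepts the crude bound that grows polynomially in $n$ --- for $a$ in the $m$-th filtration level $V_m$ of $A_1*A_2*A_1$ one has $\phi_n(a)\in E_{(2m+1)(5n+6)}$, hence $\|\phi_n(a)\|\leq c((2m+1)(5n+6)+1)^{\alpha}\|\phi_n(a)\|_2$ --- and then kills the polynomial factor with a power (spectral radius) trick: for $x\in V_m$ and $r$ chosen with $2mr<n\leq 2m(r+1)$,
\[
\|\phi_n(x)\|^{2r}=\|\phi_n((x^*x)^r)\|\leq c\big((4mr+1)(5n+6)+1\big)^{\alpha}\|(x^*x)^r\|_2\leq c\big((4mr+1)(10m(r+1)+6)+1\big)^{\alpha}\|x\|^{2r},
\]
using that $(x^*x)^r\in V_{2mr}$ and that the $L^2$-isometry of Lemma \ref{item: asy L2 L2 isometry avitzour} applies because $n>2mr$. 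Taking $2r$-th roots and letting $n\to\infty$ (so $r\to\infty$) makes the polynomial prefactor tend to $1$ and gives $\limsup_n\|\phi_n(x)\|\leq\|x\|$ directly, which is the genuine asymptotic contractivity required by Proposition \ref{wk convergence+ strong semi conv}. You should replace both halves of your norm argument with this.
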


Note that finite-dimensional $C^{*}$-probability spaces $(A,\rho)$ have the rapid decay property with the filtration $V_{n}=A$ for all $n$.  
We can thus deduce from Theorem \ref{thm: the Avitzour approach} that  
$(\mathbb{M}_m(\mathbb{C}),\rho_1)*(\mathbb{M}_n(\mathbb{C}),\mathrm{tr})$ is selfless once $m,n\geq 2$ and
$\rho_{1}$ is faithful. If $\rho_1$ is additionally nontracial, then  $(\mathbb{M}_m(\mathbb{C}),\rho_1)*(\mathbb{M}_n(\mathbb{C}),\mathrm{tr})$ is purely infinite. See Corollary \ref{MmstarMn} below.

The main preliminary result we need is the following, which builds off of a method of Avitzour.

\begin{lem}\label{lem: Avitzour argument}
Let $(A_{j},\rho_{j}),j=1,2$ be $C^{*}$-probability spaces, $u\in \cU(A_{1})$, $v,w\in \cU(A_{2})$ with $v\in A_{2}^{\rho_{2}}$ and $\rho_{2}(v^{*}w)=0=\rho_{2}(v)=\rho_{2}(w)=\rho_{1}(u)$. Let $a=a_{1}\cdots a_{\ell}$ be an alternating word in $(A_{1}\ominus \bC 1,A_{2}\ominus \bC 1)$. Assume that $n>\frac{\ell}{2}>0$. Then:

\begin{enumerate}[(i)]
    \item $(wuw)(uv)^{n}a$ is  a linear combination of alternating centered words in $(A_{1},A_{2})$ of positive length which start with $w$, \label{item: move to the front}
    \item $a(v^{*}u^{*})^{n}(w^{*}uw^{*})$ is a linear combination of alternating centered words in $(A_{1},A_{2})$ positive length which end with $w^{*}$.\label{item: move to the back}
    \item $(wuw)(uv)^{n}a(v^{*}u^{*})^{n}(w^{*}u^{*}w^{*})$ is a linear combination of alternating centered words in $(A_{1},A_{2})$ of positive length which start with $w$ and end with $w^{*}$. \label{item: move all around now}
\end{enumerate}

\end{lem}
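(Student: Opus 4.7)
The plan is to prove all three parts by a common \emph{seam-centering} mechanism: whenever an alternating centered word ends with $b\in A_j$ and the next word to multiply begins with $c\in A_j$, I rewrite $bc=(bc-\rho_j(bc))+\rho_j(bc)\cdot 1$, split the product into the two resulting summands, and continue the analysis in each. I will prove (i) directly by processing the letters of $a$ one at a time into the right end of $(wuw)(uv)^n$, deduce (ii) from (i) by adjunction, and derive (iii) by applying (i) first and then running a further round of seam-centering from the right.

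For (i): view $X=(wuw)(uv)^n$ as an alternating centered word of length $2n+3$ starting with $w$, and feed $a_1,\dots,a_\ell$ one at a time into its right end. Each insertion produces either an \emph{append} (when $a_i$ lies in the algebra opposite to the buffer's current rightmost letter) or, when the algebras agree, a split into a \emph{merge} summand (buffer length unchanged) and a \emph{scalar} summand (buffer length reduced by one, with a numerical coefficient). In every branch the running buffer stays an alternating centered word starting with $w$ as long as its length remains $\ge 1$; since only the scalar branch shortens the buffer, and only by $1$ per step, after all $\ell$ insertions the buffer has length at least $2n+3-\ell\ge 4$ by the hypothesis $n>\ell/2$. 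Hence the initial $w$ is never touched, proving (i).

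For (ii): since $(a(v^{*}u^{*})^{n}(w^{*}u^{*}w^{*}))^{*}=(wuw)(uv)^{n} a^{*}$ and $a^{*}=a_\ell^{*}\cdots a_1^{*}$ is again an alternating word of length $\ell$ in $(A_1\ominus\bC 1,A_2\ominus\bC 1)$, I apply (i) to $a^{*}$ and take adjoints, using that the adjoint of an alternating centered word starting with $w$ ends with $w^{*}$.

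For (iii): first use (i) to write $Xa=\sum_i c_i y_i$, and then for each $y_i$ process the letters of $R=(v^{*}u^{*})^{n}(w^{*}u^{*}w^{*})$ from left to right into the right end of $y_i$ by the same seam-centering procedure. Because $R$ is itself alternating, after any \emph{merge} step every subsequent letter of $R$ lies in the algebra opposite to the new rightmost letter and is just appended; thus each branch is a run of scalar reductions, at most one merge, and then only appends, so its terminal letter is always $R_{2n+3}=w^{*}$. I expect the main obstacle to be ruling out those branches of this right-side processing in which cascading scalar reductions threaten to erode $y_i$ past its initial $w$. Such a branch would have to eat through the $(uv)^n$ tail of $y_i$ and reach the $(wuw)$ prefix; at that juncture the next scalar to be produced is $\rho_2(wv^{*})$, which equals $\rho_2(v^{*}w)$ by the centralizer hypothesis $v\in A_2^{\rho_2}$ and therefore vanishes by the assumption $\rho_2(v^{*}w)=0$. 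This kills every offending branch and yields (iii).
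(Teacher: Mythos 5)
Your arguments for (i) and (ii) are correct and rest on the same seam-centering mechanism as the paper's proof: the paper organizes (i) as an induction on $n$ that peels off $(uv)$-factors while consuming two letters of $a$ at a time, whereas your ``buffer length drops by at most one per insertion, hence stays $\geq 2n+3-\ell\geq 4$'' bookkeeping is a clean repackaging of the same computation; both then deduce (ii) by taking adjoints. (For the adjoint trick to apply literally, the middle letter in the statement of (ii) should be $u^*$ rather than $u$; your proof, like the paper's, silently uses that corrected form.)

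Part (iii) is where the proposal breaks down. Your claim that an offending all-scalar branch reaches the $(wuw)$ prefix with next scalar $\rho_2(wv^*)$ presumes a particular alignment between the letters of $R=(v^*u^*)^n(w^*u^*w^*)$ and those of $y_i$, and that alignment fails precisely when $y_i$ has length $2n+3$: then the prefix letters $w,u,w$ are paired with $R_{2n+1}=w^*$, $R_{2n+2}=u^*$, $R_{2n+3}=w^*$, the corresponding scalars are $\rho_2(ww^*)=\rho_1(uu^*)=1$ rather than $\rho_2(wv^*)$, and the cascade runs to completion, producing a multiple of the empty word. Such $y_i$ do occur: for $\ell=3$ and $a=a_1a_2a_3$ with $a_1,a_3\in A_2\ominus\bC 1$, $a_2\in A_1\ominus\bC1$, take the stage-one branch ``scalar, merge, append,'' giving $y_i=(wuw)(uv)^{n-1}(ua_2-\rho_1(ua_2))a_3$ with coefficient $\rho_2(va_1)$. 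Tracking this single surviving branch (all its intermediate merge summands vanish, since $vv^*-1=uu^*-1=0$), or just computing directly by repeated centering, one finds
\[
(\rho_1*\rho_2)\bigl((wuw)(uv)^{n}a(v^*u^*)^{n}(w^*u^*w^*)\bigr)=\rho_2(va_1)\,\rho_2(a_3v^*)\,\rho_1(ua_2u^*),
\]
and there is no other empty-word branch to cancel it. Taking $a_1=v^*$ and $a_3=v$, this vanishes for all $a_2\in A_1\ominus\bC1$ only if $u\in A_1^{\rho_1}$ --- a hypothesis that is not available to you (only $v$ is assumed to lie in the centralizer of $\rho_2$). So the branch-killing mechanism you propose cannot close the argument.

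This is also why the paper proves (iii) differently: it invokes Avitzour's Proposition 3.1 (whose proof uses that $u$ lies in the centralizer of its state, so that $ua_ju^*$ remains centered) to reduce the middle $(uv)^na(v^*u^*)^n$ to words starting and ending in $A_1\ominus\bC1$ plus terms $(uv)^j$ with $j\neq 0$, and only then uses $\rho_2(v^*w)=\rho_2(vw^*)=0$ to absorb the $(uv)^j$ terms into the outer $(wuw)$ and $(w^*u^*w^*)$. Your computation in fact suggests that the centralizer condition on $u$ is genuinely needed for (iii) as stated, which may be worth flagging to the authors; but in any case the proposed left-to-right processing of $R$, with bad branches killed only by $\rho_2(wv^*)=0$, does not establish (iii).
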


\begin{proof}

(\ref{item: move to the front}): It suffices to show that once $n>\frac{\ell}{2}$, then $(uv)^{n}a$ is a linear combination of alternating centered words in $(A_{1},A_{2})$ of positive length which start with $u$. 

We first separately handle the cases $\ell=1,2$. If $\ell=1$, then $(uv)^{n}a=(uv)^{n-1}uva_{1}$. If $a_{1}\in A_{1}$, then this is already an alternating, centered word. If $a_{1}\in A_{2}$, set $\widetilde{a}=va_{1}-\rho_{2}(va_{1})$. Then:
\[(uv)^{n}a=(uv)^{n-1}u\widetilde{a}+\rho_{2}(va_{1})(uv)^{n-1}u.\]
Both terms are then alternating centered words which start with $u$.
If $\ell=2$, then necessarily $n\geq 2$ and $(uv)^{n}a=(uv)^{n-1}uva_{1}a_{2}$. If $a_{1}\in A_{1}$, then this is again already an alternating, centered word. Otherwise, $a_{1}\in A_{2}$, so $a_{2}\in A_{1}$. Set $\widetilde{a}_{1}=va_{1}-\rho_{2}(va_{1}),$ and $\widetilde{a}_{2}=ua_{2}-\rho_{1}(ua_{1})$. Then:
\[(uv)^{n}a_{1}a_{2}=(uv)^{n-1}u\widetilde{a}_{1}a_{2}+\rho_{2}(va_{1})(uv)^{n-1}u\widetilde{a}_{2}+\rho_{1}(ua_{2})\rho_{2}(va_{1})(uv)^{n-1}.\]
Since $n\geq 2$, this is a linear combination of alternating, centered words which start with $u$.

We now prove by induction on $n$ that for all $0<\ell<2n$, any alternating centered word $a=a_{1}\cdots a_{\ell}$ in $(A_{1},A_{2})$  we have that $(uv)^{n}a$ is a linear combination of alternating centered words in $(A_{1},A_{2})$ starting with $u$. The case $\ell=1$ above handles the base case $n=1$. So we assume that $n\geq 2$, that the result is true for all $1\leq k<n$, and try to prove the result is true for $n$.  By the above paragraph, it suffices to show that if $2n>\ell\geq 3$, then  for any alternating centered word $a=a_{1}\cdots a_{\ell}$ in $(A_{1},A_{2})$ we have that $(uv)^{n}a$ is a linear combination of alternating centered words starting with $u$. 
If $a_{1}\in A_{1}$, then $(uv)^{n}a$ is already an alternating centered word. Assume that $a_{1}\in A_{2},$ so that $a_{2}\in A_{1}$. Set $\widetilde{a}_{1}=va_{1}-\rho_{2}(va_{1})$, $\widetilde{a}_{2}=ua_{2}-\rho_{1}(ua_{2})$. Then:
\begin{align*}
(uv)^{n}a&=(uv)^{n-1}u\widetilde{a}_{1}a_{2}\cdots a_{\ell}+\rho_{2}(va_{1})(uv)^{n-1}\widetilde{a}_{2}a_{3}\cdots a_{\ell}\\
&+\rho_{2}(va_{1})\rho_{1}(ua_{2})(uv)^{n-1}a_{3}\cdots a_{\ell}.
\end{align*}
The first two terms are alternating centered words starting in $u$. Since $\ell\geq 3$, we have that $\ell-2\geq 1>0$ and since $\frac{\ell}{2}<n$, we have $\frac{\ell-2}{2}<n-1$. Thus by inductive hypothesis, we have that $(uv)^{n-1}a_{3}\cdots a_{\ell}$ is a linear combination of alternating centered words with start with $u$. This completes the proof of the inductive step.

(\ref{item: move to the back}): This can be argued in a manner similar to (\ref{item: move to the front}), alternatively we can take adjoints to deduce (\ref{item: move to the back}) from (\ref{item: move to the front}).

(\ref{item: move all around now}): It is explicitly noted in \cite[Proposition 3.1]{avitzour1982free} that $(uv)^{n}a(v^{*}u^{*})^{n}$ is a linear combination of words of the form:
\begin{itemize}
    \item a word of positive length starting in $u$ and ending in $u^{*}$,
    \item $(uv)^{j}$ with $j\in \bZ\setminus\{0\}$,
\end{itemize}
(alternatively this is a direct induction argument along the lines of (\ref{item: move to the back})). We remark that it is already important in the case $n=1$ that $v\in A_{2}^{\rho_{2}}$ so that $v^{*}av\in A_{2}\ominus \bC 1$ if $a\in A_{2}\ominus \bC 1$. 
The above bullet points show that $(wuw)(uv)^{n}a(v^{*}u^{*})^{n}(w^{*}u^{*}w^{*})$ is of the desired form. Here let us explicitly note that if $j>0$, then \[(wuw)(uv)^{j}(w^{*}u^{*}w^{*})\]can be regarded as an alternating centered word starting in $w$ and ending in $w^{*}$, since $vw^{*}\in A_{2}\ominus \bC 1$. Similar remarks apply to 
\[
(wuw)(uv)^{-j}(wuw)=(wuw)(v^{*}u^{*})^{j}(wuw),
\]
since $wv^{*}\in A_{2}\ominus \bC 1$.
\end{proof}

We use the previous lemma to prove the following asymptotic $L^{2}$-$L^{2}$ isometry property, which in turn will be used to mimic the proof strategy in \cite{sri2025strictcomparisonreducedgroup}.

\begin{lem}\label{item: asy L2 L2 isometry avitzour}
Let $(A_{j},\rho_{j}),j=1,2$ be $C^{*}$-probability spaces, suppose that $u\in \cU(A_{1}),v,w\in \cU(A_{2})$  with $v\in A_{2}^{\rho_{2}}$ and $\rho_{2}(v^{*}w)=0=\rho_{2}(v)=\rho_{2}(w)=\rho_{1}(u)$.

Let $B_{1}=A_{1}$, $B_{2}=A_{2}$, and $B_3=A_1$. Consider the unique $*$-homomorphism $\phi_{n}\colon B_{1}*_{\textnormal{alg}}B_{2}*_{\textnormal{alg}}B_{3}\to A_{1}*A_{2}$ which is the identity on $B_{1},B_{2}$ and is conjugation by $x_{n}^{*}$ where $x_{n}=(wuw)(uv)^{n}$. Fix $\ell\in \bN\cup\{0\}$. Then:

\begin{enumerate}[(i)]
    \item \label{item: asy trace-preserving avitzour} for any $a\in B_{1}*_{\textnormal{alg}}B_{2}*_{\textnormal{alg}}B_{3}$ which is a linear combination of alternating, centered words in $(B_{1},B_{2},B_{3})$ of length at most $\ell$, we have 
    \[(\rho_{1}*\rho_{2})(\phi_{n}(a))= (\rho_{1}*\rho_{2}*\rho_{1})(a),\]
    for all $n>\frac{\ell}{2}$.
    \item \label{item: asy L2 isom avitzour} for every $a\in B_{1}*_{\textnormal{alg}}B_{2}*_{\textnormal{alg}}B_{3}$ which is a linear combination of alternating centered words in $(B_{1},B_{2},B_{3})$ of length at most $\ell$, we have that 
    \[\|\phi_{n}(a)\|_{2}=\|a\|_{2},\]
    whenever $n>\ell$. 
\end{enumerate}

\end{lem}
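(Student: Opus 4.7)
The plan is to prove (i) by rewriting $\phi_n(a)$ as an explicit linear combination of alternating centered words of positive length in $(A_1,A_2)$ whenever $a$ is a non-scalar alternating centered word in $(B_1,B_2,B_3)$, and then to deduce (ii) by applying (i) to $a^*a$.

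For (i), by linearity I reduce to the case where $a=a_1\cdots a_k$ is a single alternating centered word of length $k\le \ell$ in $(B_1,B_2,B_3)$. The case $k=0$ is trivial, and if $a$ contains no $B_3$-letter then $\phi_n(a)=a$ lies in $B_1*_{\textnormal{alg}}B_2$, on which $\rho_1*\rho_2*\rho_1$ restricts to $\rho_1*\rho_2$. Otherwise let $i_1<\cdots<i_m$ with $m\ge 1$ be the $B_3$-positions of $a$ and decompose
\[
a = b_0\,a_{i_1}\,b_1\,a_{i_2}\,b_2\cdots a_{i_m}\,b_m,
\]
where each $b_t$ is the (possibly empty) alternating centered word in $(B_1,B_2)=(A_1,A_2)$ lying between consecutive $B_3$-positions. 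Alternation of $a$ in $(B_1,B_2,B_3)$ forces $b_t$ to be nonempty for $1\le t\le m-1$, and the size bound $|b_t|\le k-m\le \ell-1$ combined with $n>\ell/2$ yields $n>|b_t|/2$ for every relevant $t$. Applying $\phi_n$ produces
\[
\phi_n(a)=(b_0 x_n^*)\cdot a_{i_1}\cdot (x_n b_1 x_n^*)\cdot a_{i_2}\cdots a_{i_m}\cdot (x_n b_m),
\]
and I will apply parts (ii), (iii), and (i) of Lemma~\ref{lem: Avitzour argument} to the three types of factors respectively, to express each as a linear combination of alternating centered words in $(A_1,A_2)$ whose boundary letters adjacent to the intervening $a_{i_t}\in A_1$ lie in $A_2$. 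Concretely, $x_n b_t x_n^*$ will start with $w$ and end with $w^*$; $b_0 x_n^*$ will end with $w^*$; $x_n b_m$ will start with $w$. The cases where $b_0$ or $b_m$ is empty are handled directly, since $x_n^*$ itself ends with $w^*$ and $x_n$ itself starts with $w$.

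Distributing and concatenating, and using that every $a_{i_t}$ is centered in $A_1$ with both neighbours in $A_2$, I obtain $\phi_n(a)$ as a linear combination of alternating centered words of positive length in $(A_1,A_2)$. Hence $(\rho_1*\rho_2)(\phi_n(a))=0=(\rho_1*\rho_2*\rho_1)(a)$, proving (i).

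For (ii), since $\phi_n$ is a $*$-homomorphism,
\[
\|\phi_n(a)\|_2^2=(\rho_1*\rho_2)(\phi_n(a)^*\phi_n(a))=(\rho_1*\rho_2)(\phi_n(a^*a)).
\]
Expanding the product $a^*a$ of two linear combinations of alternating centered words of length $\le \ell$ and repeatedly applying the identity $x=(x-\rho_j(x)\cdot 1)+\rho_j(x)\cdot 1$ to collapse adjacent letters lying in the same $B_j$, I will rewrite $a^*a$ as a linear combination of alternating centered words in $(B_1,B_2,B_3)$ of length at most $2\ell$, plus a scalar. Applying (i) with $\ell$ replaced by $2\ell$, which requires $n>\ell$, then gives $(\rho_1*\rho_2)(\phi_n(a^*a))=(\rho_1*\rho_2*\rho_1)(a^*a)=\|a\|_2^2$. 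The main technical obstacle will be the bookkeeping in step (i) to ensure alternation and centering across every boundary in the concatenation; granted Lemma~\ref{lem: Avitzour argument}, this reduces to routine but careful checking.
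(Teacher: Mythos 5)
Your proposal is correct and follows essentially the same route as the paper: the same decomposition of $a$ into blocks separated by the $B_3$-letters, the same regrouping $(b_0x_n^*)a_{i_1}(x_nb_1x_n^*)\cdots(x_nb_m)$ with the three parts of Lemma \ref{lem: Avitzour argument} applied to the three types of factors, and the same deduction of (ii) from (i) applied to $a^*a$ (the paper cites Lemma \ref{lem: defining the filtration for RDP} for the re-centering of $a^*a$ into words of length at most $2\ell$, which is exactly the collapsing procedure you describe by hand).
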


\begin{proof}

(\ref{item: asy trace-preserving avitzour}): By linearity, it suffices to show the desired claim when $a$ is an alternating centered word in $(B_{1},B_{2},B_{3})$. Since $\phi_{n}$ is unital, the claim holds if $a$ is a word of length zero. So we assume that $a$ is an alternating centered word of length $\ell>0$ with $\ell<2n$. Write 
\[
a=y_{0}\iota_{3}(c_{1})y_{1}\iota_{3}(c_{2})\cdots y_{t-1}\iota_{3}(c_{t})y_{t},
\]
where :
\begin{itemize}
    \item $c_{i}\in A_{1}\ominus \bC 1$, and
    \item $y_{i}$ is an alternating centered word in $(B_{1},B_{2})$
    \item $y_{i}$ has positive length if $1\leq i\leq t-1$,
    \item $y_{0},y_{t}$ are potentially empty. 
\end{itemize} 
Then:
\begin{align*}
  \phi_{n}(a)&=y_{0}x_{n}^{*}c_{1}x_{n}y_{1}x_{n}^{*}c_{2}x_{n}\cdots y_{t-1}x_{n}^{*}c_{t}x_{n}y_{t}\\
  &=(y_{0}x_{n}^{*})c_{1}(x_{n}y_{1}x_{n}^{*})c_{2}\cdots (x_{n}y_{t-1}x_{n}^{*})c_{t}(x_{n}y_{t}).
\end{align*}
Note that each $y_{i}$ has length at most $\ell$. Thus if $n>\frac{\ell}{2}$, Lemma \ref{lem: Avitzour argument} implies that $x_{n}y_{i}x_{n}^{*}$ starts with $w$ and ends in $w^{*}$ if $1\leq i\leq t-1$, and that $(y_{0}x_{n}^{*})$ ends in $w^{*}$ and $(x_{n}y_{t})$ starts with $w$ (note that these last two claims are particularly straightforward if $y_{0}$ or $y_{t}$ are empty). 
Thus once $n>\frac{\ell}{2}$, we have that $\phi_{n}(a)$ is an alternating centered word of positive length and thus $(\rho_{1}*\rho_{2})(\phi_{n}(a))=0$. 

(\ref{item: asy L2 isom avitzour}): 
By Lemma \ref{lem: defining the filtration for RDP}, we know that $a^{*}a$ is a linear combination of alternating, centered words of length at most $2\ell$. Thus this follows from (\ref{item: asy trace-preserving avitzour}) and the fact that $\phi_{n}$ is a $*$-homomorphism.

\end{proof}

\begin{proof}[Proof of Theorem \ref{thm: the Avitzour approach}]
Adopt notation as in Lemma \ref{item: asy L2 L2 isometry avitzour}. We equip $B_{1}*B_{2}*B_{3}$ with the filtration $(V_{n})_{n=0}^{\infty}$ coming from Lemma \ref{lem: defining the filtration for RDP} and the filtrations $(\iota_{j}(V_{n,s}))_{n=0}^{\infty}$ when $j\in [3],s\in [2]$ have the same parity. Equip $A_{1}*A_{2}$ with the filtration $(E_{n})_{n=0}^{\infty}$ coming from Lemma \ref{lem: defining the filtration for RDP} and the filtrations $(V_{n,1})_{n},(V_{n,2})_{n}$. Fix $k\in \bN$ such that $u\in V_{k,1}$, $v,w\in V_{k,2}$. By Theorem \ref{thm: RDP for free products}, we may find $c,\alpha>0$ such that
\[\|x\|\leq C(n+1)^{\alpha}\|x\|_{2}, \textnormal{ for all $x\in E_{n}$.}\]

Fix $m\in \bN$  with $m>k$ and fix $n>m$. We wish to understand how far in the filtration $(E_{s})_{s}$ the subspace $\phi_{n}(V_{n})$ is.
Suppose that $a\in B_{1}*_{\textnormal{alg}}B_{2}*_{\textnormal{alg}}B_{3}$ is an alternating centered word in $(\iota_{1}(V_{n,1}),\iota_{2}(V_{n,2}),\iota_{3}(V_{n,1}))$ of length $\ell>0$ with $\ell\leq m$. As in the proof of Lemma \ref{item: asy L2 L2 isometry avitzour}, write 
\[a=y_{0}\iota_{3}(c_{1})y_{1}\iota_{3}(c_{2})\cdots y_{t-1}\iota_{3}(c_{t})y_{t},\]
where $y_{i}$ are alternating centered words in $(V_{n,1},V_{n,2}),$ and $c_{i}\in V_{n,1}\ominus \bC 1$, with $y_{0},y_{t}$ potentially being empty and $y_{i}$ has positive length for $1\leq i\leq t-1$. Then
\[\phi_{n}(a)=y_{0}(x_{n}^{*}c_{1}x_{n})y_{1}\cdots (x_{n}^{*}c_{t}x_{n})y_{t}.\]
Each $y_{i}\in E_{m}$, and each $x_{n}^{*}c_{i}x_{n}\in E_{5n+6}$. Repeatedly using that $E_{k_{1}}E_{k_{2}}\subseteq E_{k_{1}+k_{2}}$, we have that $\phi_{n}(a)\in E_{(2t+1)(5n+6)}\subseteq E_{(2m+1)(5n+6)}$. 
Thus for all $n>m$,  we have $\phi_{n}(V_{m})\subseteq E_{(2m+1)(5n+6)}$, and in particular
\[\|\phi_{n}(a)\|\leq c((2m+1)(5n+6)+1)^{\alpha}\|\phi_{n}(a)\|_{2}.\]
Moreover, by Lemma \ref{item: asy L2 L2 isometry avitzour}, we have that  $\|\phi_{n}(a)\|_{2}=\|a\|_{2}$, since $n>m$. 

Fix $x\in V_{m}$. Suppose that $n>2m$ and pick $r\in \bN$ with $r\geq 2$ such that $2mr<n\leq 2m(r+1)$. Then:
\begin{align*}
\|\phi_{n}(x)\|^{2r}=\|\phi_{n}((x^{*}x)^{r})\|&\leq c((4mr+1)(5n+6)+1))^{\alpha}\|(x^{*}x)^{r}\|_{2}\\
&\leq  c((4mr+1)(10m(r+1)+6)+1))^{\alpha}\|x\|^{2r}.    
\end{align*}
Letting $n\to\infty$ we have
\begin{align*}
\limsup_{n\to\infty}\|\phi_{n}(x)\|&\leq \limsup_{r\to\infty}c^{1/2r}((4m+1)(10m(r+1)+6)+1))^{\alpha/2r}\|x\|=\|x\|.
\end{align*}
So we have shown that 
\[\limsup_{n\to\infty}\|\phi_{n}(x)\|\leq \|x\|\]
 for all $x\in \bigcup_{m}V_{m}$. Moreover, by Lemma \ref{item: asy L2 L2 isometry avitzour}, we have that $(\rho_{1}*\rho_{2})(\phi_{n}(x))\to (\rho_{1}*\rho_{2}*\rho_{1})(x)$ for all $x\in \bigcup_{m}V_{m}$.  Since $\bigcup_{m}V_{m}$ is a dense $*$-subalgebra of $A_{1}*A_{2}$, we conclude the proof by applying Lemma \ref{wk convergence+ strong semi conv}.
\end{proof}

\begin{cor}\label{MmstarMn}
Let
\[
(A,\rho)\cong (\mathbb{M}_m(\mathbb{C}),\rho_1)*(\mathbb{M}_n(\mathbb{C}),\mathrm{tr}),
\]
where $\rho_1$ is faithful and $m,n\geq 2$. Then $(A,\rho)$
is selfless. Consequently, if $\rho_1$ is a trace,
then $A$ has strict comparison and stable rank one, and $A$ is purely infinite otherwise.
\end{cor}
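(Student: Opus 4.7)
The plan is to apply Theorem~\ref{thm: the Avitzour approach} directly, so the task reduces to exhibiting suitable filtrations and the three unitaries the theorem demands. The filtration side is trivial: both $\bM_m(\bC)$ and $\bM_n(\bC)$ are finite dimensional with faithful states, so by the remark immediately following Theorem~\ref{thm: the Avitzour approach} they carry a rapid-decay filtration (namely $V_0 = \bC 1$, $V_k = \bM_j(\bC)$ for $k \geq 1$), and the membership condition $u,v,w \in \bigcup_k V_{k,j}$ becomes automatic. Only the algebraic conditions on the three unitaries require work.

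Next I would produce a unitary $u \in \bM_m(\bC)$ with $\rho_1(u) = 0$. Since $\rho_1$ is faithful, write $\rho_1(\cdot) = \Tr(D\,\cdot)$ for a positive invertible density matrix $D$ and diagonalize $D = U\,\mathrm{diag}(d_1,\dots,d_m)\,U^*$. Setting $u = U p U^*$, where $p$ is the cyclic permutation $e_i \mapsto e_{i+1\,\bmod\,m}$, gives by traciality of $\Tr$ that
\[
\rho_1(u) = \Tr(U^*DU\,p) = \Tr(\mathrm{diag}(d_1,\dots,d_m)\,p) = \sum_{i=1}^m d_i\,p_{ii} = 0,
\]
where the last equality uses $m \geq 2$ so that $p$ has vanishing diagonal. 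No centralizer condition is imposed on $u$.

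For $v,w \in \bM_n(\bC)$, the centralizer $\bM_n(\bC)^{\mathrm{tr}}$ is all of $\bM_n(\bC)$ since $\mathrm{tr}$ is tracial, so only the relations $\mathrm{tr}(v) = \mathrm{tr}(w) = \mathrm{tr}(v^*w) = 0$ need to be arranged. For $n \geq 3$, set $\omega = e^{2\pi i/n}$, $v = \mathrm{diag}(1,\omega,\dots,\omega^{n-1})$, and $w = v^2$: the traces of $v$ and $w$ vanish as geometric sums of the nontrivial $n$-th roots of unity (using $\omega^2\ne 1$ when $n\ge 3$), and $\mathrm{tr}(v^*w) = \mathrm{tr}(v) = 0$. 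For $n = 2$, the pair
\[
v = \begin{pmatrix} 1 & 0 \\ 0 & -1 \end{pmatrix}, \quad w = \begin{pmatrix} 0 & 1 \\ 1 & 0 \end{pmatrix}
\]
satisfies all three relations since $v^*w = \bigl(\begin{smallmatrix} 0 & 1 \\ -1 & 0 \end{smallmatrix}\bigr)$ is traceless.

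With the hypotheses verified, Theorem~\ref{thm: the Avitzour approach} yields selflessness of $(A,\rho)$. The structural consequences then follow from the general theory of selfless $C^*$-probability spaces developed in \cite{robert2023selfless} and summarized in the introduction: in the tracial case, $\rho$ is a faithful trace and selflessness delivers stable rank one together with strict comparison; in the non-tracial case, $\rho$ remains faithful but is not a trace, and the purely-infinite/stable-rank-one dichotomy combined with the at-most-one-trace property forces pure infiniteness. I anticipate no serious obstacle; the only mild subtlety is the linear-algebra construction of $u$, and even there the hypothesis $m \geq 2$ is exactly what makes the cyclic shift argument run.
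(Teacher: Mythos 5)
Your proposal is correct and follows essentially the same route as the paper: constant filtrations give rapid decay for the finite-dimensional factors, $u$ is obtained by diagonalizing the density matrix of $\rho_1$ and taking a zero-diagonal permutation unitary, and $v$ is the diagonal unitary of $n$-th roots of unity, with the centralizer condition automatic since $\mathrm{tr}$ is a trace. The only (immaterial) difference is your choice $w=v^2$ for $n\geq 3$ where the paper takes $w$ to be another zero-diagonal permutation unitary; both verify the Avitzour conditions of Theorem \ref{thm: the Avitzour approach} equally well.
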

\begin{proof}
Clearly $\mathbb{M}_m(\mathbb{C})$ and $\mathbb{M}_n(\mathbb{C})$ have rapid decay relative to constant filtrations (see Example \ref{ex:AF}). It remains to find unitaries $u,v,w$ as in Theorem \ref{thm: the Avitzour approach}. Write $\rho_1(\cdot)=\mathrm{Tr}((\cdot)a)$, and assume without loss of generality that $a$
is diagonal. Now choose $u$ and $w$ to be permutation unitaries
with zeros along the main diagonal, thus fulfilling $\rho_1(u)=\mathrm{tr}(w)=0$, and choose $v$ the diagonal unitary
with diagonal $(e^{\frac{2\pi i k}{n} })_{k=0}^{n-1}$.
\end{proof}

We can apply Theorem \ref{thm: the Avitzour approach} in other similar  situations. For instance, it follows from Theorem \ref{thm: the Avitzour approach} that $(A,\tau)*(\bM_{n}(\bC),\tr)$ is selfless if $n\geq 2$, $A$ is finite-dimensional, and $\tau$ is a trace with $\tau(z)<1/2$ for every central projection in $A$.

\section{Selflessness for more general free products}\label{sec: here is where the fun begins}

The main idea of the proof of Theorem \ref{thm: the Avitzour approach} can be adapted to show that $(A,\rho)*(C(\bT),m)$ is selfless, where $m$ is integration against Lebesgue measure and $(A,\rho)$ has a filtration with the rapid decay property. Indeed in this case, we simply adapt Lemma \ref{item: asy L2 L2 isometry avitzour} using the $*$-homomorphism $\phi_{n}\colon B_{1}*_{\textnormal{alg}}B_{2}*_{\textnormal{alg}}B_{3}\to A*C(\bT)$ (where $B_{1}=B_{3}=A,B_{2}=C(\bT)$), defined by declaring that $\phi_{n}$ is the identity on $B_{1},B_{2}$ and is conjugation by $u^{n}$ on $B_{3}$, where $u$ is the Haar unitary generating $C(\bT)$. The proof proceeds, with minor modifications, as that of Theorem \ref{item: asy L2 L2 isometry avitzour}. In this section, we expand on, and abstract, this strategy for free products with the circle to handle free products $(A_{1},\rho)*(A_{2},\tau)$ where $\tau$ is a trace.

\subsection{The general approach}

We give a general theorem in this section which will be our approach to deducing selflessness for other free products. 

Throughout this section we let $(A_1,\rho)$ and $(A_2,\tau)$
be unital $C^*$-probability spaces such that $\rho$ has faithful GNS representation and $\tau$ is a faithful trace.

\begin{lem}\label{lem: combinatorial strucutre of conjugation map}
Set $B_{1}=A_{1}=B_{3}$, $B_{2}=A_{2}$ 

For $v\in \cU(A_{2})$, define a *-homomorphism  $\phi_{v}\colon B_{1}*_{\textnormal{alg}}B_{2}*_{\textnormal{alg}}B_{3}\to A_{1}*A_{2}$ by declaring that $\phi_{v}$ is the identity on the first copy of $A_{1}$ and on $A_{2}$,  and that it is conjugation by $v$ on the second copy of $A_{1}$. 

Suppose that $a=a_{1}\cdots a_{\ell}\in B_{1}*_{\textnormal{alg}}B_{2}*_{\textnormal{alg}}B_{3}$ is an alternating centered word  in $(B_{1},B_{2},B_{3})$, with $\ell>0$. 

For $j\in [2]$, let $\Upsilon_{j}$ be the set of $x\in A_{j}$ such that there exists $1\leq i\leq \ell$ with $a_{i}\in (B_{s}\ominus \bC1)$ and $\iota_{s}(x)=a_{i}$, where $s\in [3]$ has the same parity as $j$ (i.e., $s\in \{1,3\}$ if $j=1$ and $s=2$ if $j=2$). Set 
\[\widetilde{\Upsilon}_{2}=(\Upsilon_{2}\cup \{1\})v\cup v^{*}(\Upsilon_{2}\cup \{1\})\cup v^{*}\Upsilon_{2}v \cup \Upsilon_{2}.\]

Then  
\[
\phi_{v}(a)=b_{1}\cdots b_{p}
\]
where:
\begin{itemize}
    \item $p$ depends only upon $a$ and not $v$, 
    \item $b_{i}\in A_{j(i)}$, where $j\colon [p]\to \{1,2\}$ is alternating and depends only upon $a$ (not upon $v$),
    \item $b_{i}\in \Upsilon_{1}$ if $j(i)=1$,
    \item $b_{i}\in \widetilde{\Upsilon}_{2}$ if $j(i)=2$.
    \item $p\leq 3\ell+2$.
\end{itemize}

\end{lem}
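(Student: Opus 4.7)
My approach would be to compute $\phi_v(a)$ explicitly by ``expanding'' each letter of the alternating word, then performing the local merges needed to restore alternation in the free product $A_1*A_2$. Specifically, since $v$ is unitary and ``conjugation by $v$'' means $\iota_3(x)\mapsto vxv^*$, I would associate to each letter $a_i$ a flat block in $A_1\sqcup A_2$ as follows: if $a_i\in B_1$ (resp.\ $B_2$), the block is the single letter $a_i\in A_1$ (resp.\ $A_2$); if $a_i\in B_3$, the block is the triple $(v,a_i,v^*)$, whose entries lie in $A_2,A_1,A_2$ respectively.  Concatenating all $\ell$ blocks produces a flat sequence of total length at most $3\ell$ in which each entry is tagged by either $A_1$ or $A_2$, and internally each $B_3$-block is already alternating. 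Note that every $A_1$-entry in the flat sequence is either an original $a_i$ (hence in $\Upsilon_1$) or the middle letter of a $B_3$-block (again in $\Upsilon_1$), so the desired structure of the $A_1$-entries of the final word is automatic.

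\textbf{Case analysis of the merges.} The crux is to locate where two consecutive entries of the flat sequence land in the same algebra. Since the original word alternates in $(B_1,B_2,B_3)$, I only need to inspect six consecutive pair types $(s_i,s_{i+1})$. Four of them, namely $(B_1,B_2),(B_1,B_3),(B_2,B_1),(B_3,B_1)$, produce a locally alternating join requiring no merge. The two remaining transitions are the essential ones: $(B_2,B_3)$ creates the adjacent $A_2$-pair $(a_i,v)$ and $(B_3,B_2)$ creates $(v^*,a_{i+1})$. Merging each such pair (and iterating when two merges meet within a $(B_3,B_2,B_3)$ triple, collapsing $(v^*,a,v)$ into a single $A_2$-entry) restores an alternating sequence in $(A_1,A_2)$. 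Crucially, the resulting pattern $j\colon[p]\to\{1,2\}$ and the value of $p$ depend only on the index sequence $s_1,\ldots,s_\ell\in\{1,2,3\}$ of the original word, not on $v$, since the substitution and merging rules are purely positional.

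\textbf{Identifying the $A_2$-entries.} The post-merge $A_2$-entries fall into exactly five possibilities, each matching one of the pieces of $\widetilde\Upsilon_2$: an unmerged $B_2$-letter $a\in\Upsilon_2$; a solitary $v$ (the leading marker of an unmerged $B_3$-expansion), matching $\{1\}v$; a solitary $v^*$ (trailing marker of an unmerged $B_3$-expansion), matching $v^*\{1\}$; a single merge $av$ with $a\in\Upsilon_2$ from a $(B_2,B_3)$ transition, matching $\Upsilon_2 v$; a single merge $v^*a$ with $a\in\Upsilon_2$ from $(B_3,B_2)$, matching $v^*\Upsilon_2$; and a double merge $v^*av$ with $a\in\Upsilon_2$ from $(B_3,B_2,B_3)$, matching $v^*\Upsilon_2 v$. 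The definition of $\widetilde\Upsilon_2$ is designed to contain exactly these elements. The bound $p\le 3\ell+2$ (in fact $p\le 3\ell$) is immediate from the initial flat length bound, since merging only decreases length.

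\textbf{Main obstacle.} There is no serious analytic obstruction; the lemma is a careful combinatorial bookkeeping statement. The only point demanding attention is being exhaustive in the case analysis of consecutive pair types and iterated merges, so that every $A_2$-entry of $\phi_v(a)$ is accounted for by one of the five pieces comprising $\widetilde\Upsilon_2$. Once the case analysis is written out, both the alternation of $j$ and the membership claims for the $b_i$ drop out immediately, as does the $v$-independence of $p$ and $j$.
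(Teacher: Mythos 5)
Your proposal is correct and is essentially the paper's own argument: the paper writes $a=y_0\iota_3(c_1)y_1\cdots\iota_3(c_t)y_t$ and regroups $\phi_v(a)=(y_0v)c_1(v^*y_1v)c_2\cdots c_t(v^*y_t)$, which is just a block-level packaging of your letter-by-letter expansion followed by local merges, and the resulting case analysis of the $A_2$-entries (solitary $v$ or $v^*$, single merges $av$ and $v^*a$, the double merge $v^*av$, and untouched $\Upsilon_2$-letters) is identical. Your observation that the maximal run of consecutive $A_2$-tagged entries has length three, so no longer merge chains occur, is the one point the paper leaves implicit, and your bound $p\le 3\ell$ is consistent with (slightly sharper than) the stated $p\le 3\ell+2$.
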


\begin{proof}
Write
\[
a=y_{0}\iota_{3}(c_{1})y_{1}\iota_{3}(c_{2})\cdots y_{t-1}\iota_{3}(c_{t})y_{t},
\]
where $c_{i}\in B_{3}\ominus \bC 1$, and $y_{i}$ is an alternating word in $(B_{1}\ominus \bC 1,B_{2}\ominus \bC1)$, with $y_{0},y_{t}$ potentially being empty. 
Then
\begin{align*}
\phi_{v}(a)&=y_{0}(vc_{1}v^{*})y_{1}(vc_{2}v^{*})\cdots (vc_{t}v^{*})y_{t}\\
&=(y_{0}v)c_{1}(v^{*}y_{1}v)c_{2}\cdots (v^{*}y_{t-1}v)c_{t}(v^{*}y_{t}).
\end{align*}
Note that $y_{0},y_{t}$ could be empty in which case $y_{0}v=v$ and $v^{*}y_{t}=v^{*}$. If $y_{0}$ is not empty, then $y_0v$ ends in $(\Upsilon_{2}\cup\{1\})v$ and its other letters are in $\Upsilon_{1}\cup \Upsilon_{2}$. Similarly, if $y_{t}$ is not empty, then $v^*y_t$ is an alternating word in $(\Upsilon_{1},\Upsilon_{2}\cup v^*(\Upsilon_{2}\cup\{1\}))$. 
If $1\leq i\leq t-1$ and $y_{i}$ is a single letter in $\Upsilon_{2}$, then $v^{*}y_{i}v$ is a single letter in $v^{*}\Upsilon_{2}v$. If $y_{i}$ is a single letter in $\Upsilon_{1}$, then $v^{*}y_{i}v$ is an alternating word in $(\Upsilon_{1},\{v,v^{*}\})$. Finally, if $y_{i}$ has length longer than $2$, then $v^{*}y_{i}v$ is an alternating word in 
\[
(\Upsilon_{1},v^{*}(\Upsilon_{2}\cup\{1\})\cup \Upsilon_{2}\cup (\Upsilon_{2}\cup \{1\})v),
\]
which begins in $v^{*}(\Upsilon_{2}\cup\{1\})$ and ends in $(\Upsilon_{2}\cup\{1\})v$. In all cases, we see that $v^{*}y_{i}v$ is an alternating word in $(\Upsilon_{1},\widetilde{\Upsilon}_{2})$ which begins and ends in $\widetilde{\Upsilon}_{2}$. Since each $c_{i}\in \Upsilon_{1}$, we see that $\phi_{v}(a)$ has the desired form. 

For the bound on $p$, let $\ell_{i}$ be the length of $y_{i}$, 
so that $t+\sum_{i}\ell_{i}=\ell$. Note that the length of $\phi_{v}(a)$ is increased by at most $2$ for every $y_{i}$. Thus the length of $\phi_{v}(a)$ is at most 
\[
t+2(t+1)+\sum_{i}\ell_{i}\leq 3\ell+2.\qedhere
\]
\end{proof}

We remark that nowhere in the above proof did we use that $\tau$ is a trace, and so the above lemma holds for more general free products. This may be of use in future investigations of selflessness for reduced free products. 


We now adapt the proof of Theorem \ref{thm: the Avitzour approach} for the case of free products with the circle (outlined at the start of this section) to more general free products. To do this we will need to adapt Lemma \ref{item: asy L2 L2 isometry avitzour}. We no longer assume that the  sequence of unitaries $v_{n}$ used to define $\phi_{n}$ belong to the filtration. Instead we use almost orthogonality conditions to guarantee that the maps are asymptotically trace-preserving and that the image of $\phi_{n}$ lands in a subspace where the operator norm is controlled by the $L^{2}$-norm.

\begin{lem}\label{lem: comb control in asymptotic case}

Let $(v_{k})_{k}$ be a sequence in $\cU(A_{2})$. Let $B_{1},B_{2},B_{3},$ be as in Lemma \ref{lem: combinatorial strucutre of conjugation map}. For each $k$, set $\phi_{k}=\phi_{v_{k}}$, where $\phi_{v_k}$ is a *-homomorphism defined as in Lemma \ref{lem: combinatorial strucutre of conjugation map}.

Let $a=a_{1}\cdots a_{\ell}$ be an alternating  word in $(B_{1}\ominus \bC 1)\cup (B_{2}\ominus \bC 1)\cup (B_{3}\ominus \bC 1)$ with $\ell>0$.
Let $\Upsilon_{1},\Upsilon_{2},\widetilde{\Upsilon}_{2}$ be defined as in Lemma \ref{lem: combinatorial strucutre of conjugation map}.
Suppose that 
\[\tau(v_{k}x)\to_{k\to\infty} 0 \textnormal{ for all $x\in \Upsilon_{2}\cup \{1\}\cup \Upsilon_{2}^{*}$.}\]
Finally, assume that $(\widehat{F}_{k})_k$ is a sequence of linear subspaces of $A_{2}$ such that 
\[
\lim_{k\to\infty}\max_{x\in \Upsilon_{2}}(\inf_{y\in \widehat{F}_{k}}\|x-y\|,\inf_{y\in \widehat{F}_{k}}\|v_{k}^{*}xv_{k}-y\|)=0.
\]
Set 
\[\Xi_{k}=(\id-\tau)((\Upsilon_{2}\cup\{1\})v_{k}\cup v_{k}^{*}(\Upsilon_{2}\cup\{1\})\cup \widehat{F}_{k}).\]
\begin{enumerate}[(i)]
\item 
There is a  $0<p\leq 3\ell+2$ depending only upon $a$, and a sequence $b_{k}$
of alternating words in $(\Upsilon_{1},\Xi_{k})$ of length $p$ such that 
\[
\|\phi_{k}(a)-b_{k}\|\to_{k\to\infty}0.
\]
\item we have $(\rho*\tau)(\phi_{k}(a))\to_{k\to\infty}0$. 
\end{enumerate}

\end{lem}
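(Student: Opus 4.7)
The plan is to apply Lemma \ref{lem: combinatorial strucutre of conjugation map} and then perturb each $\widetilde{\Upsilon}_{2}$-factor into a centered element of $\Xi_{k}$, with the perturbation going to zero in operator norm. Lemma \ref{lem: combinatorial strucutre of conjugation map} yields
\[
\phi_{k}(a)=c_{1,k}c_{2,k}\cdots c_{p,k},
\]
where $0<p\leq 3\ell+2$, the alternating pattern $j\colon[p]\to\{1,2\}$ is independent of $k$, $c_{i,k}\in\Upsilon_{1}$ when $j(i)=1$, and $c_{i,k}\in\widetilde{\Upsilon}_{2}$ when $j(i)=2$. I set $b_{i,k}:=c_{i,k}$ for indices with $j(i)=1$.

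For $j(i)=2$, I handle the four forms composing $\widetilde{\Upsilon}_{2}$. If $c_{i,k}=xv_{k}$ with $x\in\Upsilon_{2}\cup\{1\}$, set $b_{i,k}=xv_{k}-\tau(xv_{k})\in\Xi_{k}$; traciality gives $\tau(xv_{k})=\tau(v_{k}x)\to 0$ by hypothesis, so $\|c_{i,k}-b_{i,k}\|\to 0$. The symmetric case $c_{i,k}=v_{k}^{*}x$ is handled via $\tau(v_{k}^{*}x)=\overline{\tau(v_{k}x^{*})}\to 0$, using $x^{*}\in\Upsilon_{2}^{*}\cup\{1\}$. If $c_{i,k}\in\Upsilon_{2}$, or $c_{i,k}=v_{k}^{*}xv_{k}$ with $x\in\Upsilon_{2}$, pick $y_{i,k}\in\widehat{F}_{k}$ with $\|c_{i,k}-y_{i,k}\|\to 0$ (from the asymptotic containment hypothesis), and set $b_{i,k}=y_{i,k}-\tau(y_{i,k})\in\Xi_{k}$. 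Since $\tau(c_{i,k})=0$ in both cases (using traciality for the conjugated form), $|\tau(y_{i,k})|\leq\|y_{i,k}-c_{i,k}\|\to 0$, hence $\|c_{i,k}-b_{i,k}\|\to 0$.

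Let $b_{k}:=b_{1,k}\cdots b_{p,k}$, which is an alternating word in $(\Upsilon_{1},\Xi_{k})$ of length $p$. Because $\Upsilon_{1}\cup\Upsilon_{2}$ is the fixed finite set extracted from $a$, the factors $c_{i,k}$ and $b_{i,k}$ are uniformly bounded in operator norm by some constant $M(a)$ (using $\|v_{k}\|=1$ and that centering at most doubles norms). A standard telescoping estimate then gives
\[
\|\phi_{k}(a)-b_{k}\|\leq M(a)^{p-1}\sum_{i=1}^{p}\|c_{i,k}-b_{i,k}\|\to_{k\to\infty} 0,
\]
establishing (i). For (ii), observe that $b_{k}$ is an alternating centered word in $(A_{1},A_{2})$ of positive length, since $\Upsilon_{1}\subseteq A_{1}\ominus\bC 1$ and $\Xi_{k}\subseteq A_{2}\ominus\bC 1$ by construction; hence $(\rho*\tau)(b_{k})=0$, and part (i) yields $|(\rho*\tau)(\phi_{k}(a))|\leq\|\phi_{k}(a)-b_{k}\|\to 0$. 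The main subtlety is ensuring that all four letter types in $\widetilde{\Upsilon}_{2}$ can be centered with vanishing error using only the stated hypothesis $\tau(v_{k}x)\to 0$ for $x\in\Upsilon_{2}\cup\{1\}\cup\Upsilon_{2}^{*}$, together with the asymptotic containment in $\widehat{F}_{k}$; this is exactly what the case analysis above accomplishes.
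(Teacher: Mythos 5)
Your proof is correct and follows essentially the same route as the paper: apply Lemma \ref{lem: combinatorial strucutre of conjugation map}, center each $\widetilde{\Upsilon}_2$-letter using $\tau(v_kx)\to 0$ (plus traciality for $v_k^*xv_k$ and the adjoint trick for $v_k^*x$), approximate the $\Upsilon_2$ and $v_k^*\Upsilon_2 v_k$ letters inside $\widehat F_k$, and deduce (ii) from freeness. The only difference is organizational: the paper first expands $\phi_k(a)$ into the fully centered word plus trace-coefficient correction terms and then perturbs into $\Xi_k$, whereas you fold both perturbations into a single letter-by-letter telescoping estimate, which is equivalent.
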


\begin{proof}
Set $\rho_{1}=\rho$ and $\rho_{2}=\tau$.     
By Lemma \ref{lem: combinatorial strucutre of conjugation map} we have that 
\[\phi_{k}(a)=\widetilde{b}_{1,k}\cdots \widetilde{b}_{p,k}\]
where:
\begin{itemize}
    \item $\widetilde{b}_{i,k}\in A_{j(i)}$ where $p$ and $j\colon [p]\to \{1,2\}$ is independent of $k$, 
    \item $\widetilde{b}_{i,k}\in  \Upsilon_{1}$ if $j(i)=1$,
    \item $p\leq 3\ell+2$,
    \item $\widetilde{b}_{i,k}\in (\Upsilon_{2}\cup\{1\})v_{k}\cup v_{k}^{*}(\Upsilon_{2}\cup \{1\})\cup v_{k}^{*}\Upsilon_{2}v_{k}\cup \Upsilon_{2}$, if $j(i)=2$. 
\end{itemize}
Set $\widehat{b}_{i,k}=\widetilde{b}_{i,k}-\rho_{j(i)}(\widetilde{b}_{i,k})$. Observe that $\|\widehat{b}_{i,k}\|\leq 2\|\widetilde{b}_{i,k}\|$.

Then:
\[\phi_{k}(a)=\widehat{b}_{1,k}\cdots \widehat{b}_{\ell,k}+\sum_{\varnothing\ne I\subseteq [\ell]}\left(\prod_{i\in I}\rho_{j(i)}(\widetilde{b}_{i,k})\right)\left(\prod_{i\in [\ell]\setminus I}\widehat{b}_{i,k}\right),\]
here the product $\prod_{i\in [\ell]\setminus I}\widehat{b}_{i,k}$ is interpreted as $\widehat{b}_{i_{1},k}\cdots \widehat{b}_{i_{s},k}$ if $[\ell]\setminus I=\{i_{1},\cdots, i_{s}\}$ with $i_{t}<i_{t+1}$, for $t=1,\cdots,s-1$. 
Since $\tau(v_{k}x)\to_{k\to\infty}0$ for all $x\in \Upsilon_{2}$, and $\tau$ is invariant under unitary conjugation, we see that \[\left(\prod_{i\in I}\rho_{j(i)}(\widetilde{b}_{i,k})\right)\to_{k\to\infty}0.\] By our control of the form of the $\widetilde{b}_{i,k}$ as well as the fact that $\|v_{k}\|=1$, we know that $\left\|\prod_{i\in [\ell]\setminus I}\widetilde{b}_{i,k}\right\|$ is bounded independent of $k$. Thus setting $\widehat{b}_{k}=\widehat{b}_{1,k}\cdots \widehat{b}_{p,k}$ proves that $\|\phi_{k}(a)-\widehat{b}_{k}\|\to_{k\to\infty}0$. The second item follows from the fact that free independence guarantees that $(\rho*\tau)(\widehat{b}_{k})=0$.

To finish the proof of the first item, note that by assumption we can find $b_{i,k}\in \widehat F_{k}$ such that $\|\widehat{b}_{i,k}-b_{i,k}\|\to_{k\to\infty}0$. Set $b_{k}=b_{1,k}\cdots b_{p,k}$ and observe that
\[\|b_{k}-\widehat{b}_{k}\|\leq \sum_{i=1}^{p}\left(\prod_{j=1}^{i-1}\|\widehat{b}_{j,k}\|\right)\|\widehat{b}_{i,k}-b_{i,k}\|\left(\prod_{j=i+1}\|b_{j,k}\|\right).\]
Since we previously saw that $\|\widehat{b}_{i,k}\|\leq 2\|\widetilde{b}_{i,k}\|$, the fact that $\|\widehat{b}_{i,k}-b_{i,k}\|\to_{k\to\infty}0$ and $\|\phi_{k}(a)-\widehat{b}_{k}\|\to_{k\to\infty}0$ establishes that 
\[\|\phi_{k}(a)-b_{k}\|\to_{k\to\infty}0.\]

\end{proof}

For later use, we upgrade the operator norm perturbation above to be given by a linear map close to $\phi_{k}$.

\begin{lem}\label{lem: asy L2-L2 isometry}
    Suppose we are given unital, linear subspaces $V_{j}\subseteq A_{j}$, for $j=1,2$.
Let $v_{k}\in\cU(A_{2})$ be a sequence of unitaries such that $\tau(v_{k}x)\to_{k\to\infty}0$ for all $x\in V_{2}$.
Let
$
\phi_{k}=\phi_{v_k}\colon B_{1}*_{\textnormal{alg}}B_{2}*_{\textnormal{alg}}B_{3}\to A_{1}*A_{2}
$
be given as in Lemmas \ref{lem: comb control in asymptotic case} and \ref{lem: combinatorial strucutre of conjugation map}.

 Suppose that $\widehat{F}_{k}\subseteq A_{2}$ is a sequence of linear subspaces with $1\in \widehat{F}_{k}$ such that 
    \[\lim_{k\to\infty}\max(\inf_{y\in \widehat{F}_{k}}\|y-x\|,\inf_{y\in \widehat{F}_{k}}\|y-v^{*}xv\|)=0,\]
    for all $x\in V_{n,2}$. Set 
    \[
    F_{k}=(\id-\tau)(V_{2}v_{k}+v_{k}^{*}V_{2}+\widehat{F}_{k}),
    \]
    and for $p\in \bN$, denote by $W_{p,k}\subseteq A_{1}*A_{2}$ the span of all alternating words in $(A_{1}\ominus \bC 1,F_{k})$ of length $p$. 
    Finally, for $n\in \bN$, denote by $\widehat{V}_{n}$  the span of all alternating words in $(\iota_{1}(V_{n,1}\ominus \bC 1), \iota_{2}(V_{n,2}\ominus \bC 1), \iota_{3}(V_{n,1}\ominus \bC 1))$ of length at most $n$. 


Then:
\begin{enumerate}[(i)]
    \item $(\rho*\tau)(\phi_{k}(a))\to_{k\to\infty} (\rho*\tau*\rho)(a)$ for all $a\in B_{1}*_{\textnormal{alg}}B_{2}*_{\textnormal{alg}}B_{3}$, \label{item: asymptotic trace-preserving}

\item 
\label{item: norm perturbation}there is a sequence of linear maps $\widetilde{\phi}_{k}\colon \widehat{V}_{n}\to \sum_{p=0}^{3n+2}W_{p,k}$ such that 
\[
\|\phi_{k}(a)-\widetilde{\phi}_{k}(a)\|\to_{k\to\infty}0 
\]
for all $a\in \widehat{V}_{n}$.
\end{enumerate}


\end{lem}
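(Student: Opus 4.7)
For part (i), note that every element of $B_{1}*_{\textnormal{alg}}B_{2}*_{\textnormal{alg}}B_{3}$ is a linear combination of $1$ and alternating centered words in $(B_{1},B_{2},B_{3})$. The claim is trivial on $1$; for an alternating centered word $a$ of positive length, freeness in $(A_{1},\rho)*(A_{2},\tau)*(A_{1},\rho)$ gives $(\rho*\tau*\rho)(a)=0$, while Lemma \ref{lem: comb control in asymptotic case}(ii) gives $(\rho*\tau)(\phi_{k}(a))\to 0$ (assuming $V_{2}$ is self-adjoint and unital, the hypotheses of that lemma are met since the letters of $a$ lie in $V_{1}\cup V_{2}$ and $1\in \widehat{F}_{k}$). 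Extending by linearity yields (i).

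For part (ii), the idea is to explicitly modify, factor by factor, the alternating-word expansion of $\phi_{k}(a)$ provided by Lemma \ref{lem: combinatorial strucutre of conjugation map}, so as to land in alternating words in $(A_{1}\ominus \bC1,F_{k})$, and to do so in a way depending linearly on the letters of $a$. Using the approximation hypothesis on $\widehat{F}_{k}$ together with a Hamel basis of $V_{2}$ we choose, for each $k$, \emph{linear} maps $\alpha_{k},\beta_{k}\colon V_{2}\to \widehat{F}_{k}$ such that $\|\alpha_{k}(x)-x\|\to 0$ and $\|\beta_{k}(x)-v_{k}^{*}xv_{k}\|\to 0$ for every fixed $x\in V_{2}$. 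Given an alternating centered word $a=y_{0}\iota_{3}(c_{1})y_{1}\cdots \iota_{3}(c_{t})y_{t}$ in $\widehat{V}_{n}$, Lemma \ref{lem: combinatorial strucutre of conjugation map} writes $\phi_{k}(a)=(y_{0}v_{k})c_{1}(v_{k}^{*}y_{1}v_{k})c_{2}\cdots c_{t}(v_{k}^{*}y_{t})$ as an alternating product $b_{1,k}\cdots b_{p,k}$ with $p\leq 3\ell+2\leq 3n+2$, whose $A_{2}$-factors are of one of four types: $xv_{k}$ (or $v_{k}$ alone) with $x\in \Upsilon_{2}\cup\{1\}$; $v_{k}^{*}x$ (or $v_{k}^{*}$) with $x\in \Upsilon_{2}\cup\{1\}$; $v_{k}^{*}xv_{k}$ with $x\in \Upsilon_{2}$; or $x\in \Upsilon_{2}$. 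I define $\widetilde{\phi}_{k}(a)$ by replacing these factors with $(\id-\tau)(xv_{k})$, $(\id-\tau)(v_{k}^{*}x)$, $(\id-\tau)(\beta_{k}(x))$, and $(\id-\tau)(\alpha_{k}(x))$ respectively, leaving the $A_{1}$-factors unchanged.

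Each such replacement is linear in the corresponding letter of $a$, so the assignment $a\mapsto \widetilde{\phi}_{k}(a)$ is multilinear in the letters of an alternating word; since alternating products of Hamel-basis elements of $V_{1}\ominus \bC1$ and $V_{2}\ominus \bC1$ form a basis of $\widehat{V}_{n}\subseteq B_{1}*_{\textnormal{alg}}B_{2}*_{\textnormal{alg}}B_{3}$, this formula extends to a well-defined linear map $\widetilde{\phi}_{k}\colon \widehat{V}_{n}\to A_{1}*A_{2}$. By construction each modified $A_{2}$-factor lies in $F_{k}$ and the alternating structure is preserved, so $\widetilde{\phi}_{k}(\widehat{V}_{n})\subseteq \sum_{p=0}^{3n+2}W_{p,k}$. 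The norm convergence $\|\phi_{k}(a)-\widetilde{\phi}_{k}(a)\|\to 0$ then follows by a telescoping product-rule argument, using the choices of $\alpha_{k},\beta_{k}$ together with $|\tau(v_{k}y)|\to 0$ for $y\in V_{2}$ (which controls the trace-centering errors), essentially as in the final step of Lemma \ref{lem: comb control in asymptotic case}. The main obstacle is the passage from the naturally multilinear recipe to an honest linear map; this is resolved by the free-product description, which guarantees linear independence of alternating words in Hamel-basis letters and hence legitimises the multilinear extension to all of $\widehat{V}_{n}$.
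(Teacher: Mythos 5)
Your proof is correct and follows essentially the same route as the paper: part (i) reduces to Lemma \ref{lem: comb control in asymptotic case} plus freeness of alternating centered words, and part (ii) extends the per-word approximation of that lemma linearly over $\widehat{V}_{n}$. The only difference is that the paper sidesteps your well-definedness concern by simply choosing an arbitrary basis of words for each length-$\ell$ piece $\widehat{V}_{\ell,n}$ and defining $\widetilde{\phi}_{k}$ on that basis, taking the approximants $b_{k}^{(j)}$ directly from Lemma \ref{lem: comb control in asymptotic case}; so your linear selections $\alpha_{k},\beta_{k}$ and the multilinearity argument, while valid, are not needed.
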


\begin{proof}
(\ref{item: asymptotic trace-preserving}): By linearity, it is enough to show that 
\[(\rho*\tau)(\phi_{k}(a_{1}\cdots a_{\ell}))\to_{k\to\infty}0\]
where $a_{1}\cdots a_{\ell}$ is an alternating word in $(B_{1}\ominus \bC 1, B_{2}\ominus \bC 1, B_{3}\ominus \bC 1)$ of positive length. By freeness, alternating centered words in $(A_{1}\ominus \bC 1, A_{2}\ominus \bC 1)$ have trace zero, and so the desired result follows from Lemma \ref{lem: comb control in asymptotic case}.


(\ref{item: norm perturbation}): 
For $\ell\in [n]\cup\{0\}$, let us denote by $\widehat{V}_{\ell,n}$ the span of all alternating words in $(\iota_{1}(V_{n,1}\ominus \bC 1),\iota_{2}( V_{n,2}\ominus \bC 1), \iota_{3}(V_{n,1}\ominus \bC 1))$ of length  exactly $\ell$.
More concretely, $\widehat{V}_{\ell,n}$ is the span of  the
words in $B_{1}*_{\textnormal{alg}}B_{2}*_{\textnormal{alg}}B_{3}$ of the form $a_{1}\cdots a_{\ell}$ where
    \begin{itemize}
        \item $\ell\leq n$,
        \item $a_{i}\in \iota_{j(i)}(V_{s(i)})\ominus \bC 1)$ $j\colon [\ell]\to [3]$ is alternating, $s\colon [\ell]\to [2]$ is such that $s(i)$ and $j(i)$ have the same parity,
        \end{itemize}

The spaces $(\widehat{V}_{\ell,n})_{\ell=0}^{n}$   are pairwise orthogonal, by freeness, and their sum is $\widehat{V}_n$. By the triangle inequality, it suffices to define $\widetilde{\phi}_{k}$ on each $\widehat{V}_{\ell,n}$ and show it has the desired norm perturbation property. For $\ell=0$, we simply take $\widetilde{\phi}_{k}$ to be the identity (identifying the copies of $\bC1$ inside $A_{1}*A_{2}$ and $A_{1}*A_{2}*A_{1}$). 

Fix a basis $(v_{j})_{j\in I_{\ell}}$ for $\widehat{V}_{\ell,n}$ where each 
\[v_{j}=a_{1,j}a_{2,j}\cdots a_{\ell,j}\]
and  $a_{1,j}a_{2,j}\cdots a_{\ell,j}$ is a word satisfying the above two bullet points. By Lemma \ref{lem: comb control in asymptotic case} we know that there is a fixed $0<p\leq 3n+2$ and $b_{k}^{(j)}\in W_{p,k}$ such that $\|\phi_{k}(v_{j})-b_{k}^{(j)}\|\to_{k\to\infty}0$. Let $\widetilde{\phi}_{k}$ be the unique linear map satisfying $\widetilde{\phi}_{k}(v_{j})=b_{k}^{(j)}$ for all $j\in I_{\ell}$. Applying the triangle inequality again, we see that $\|\phi_{k}(a)-\widetilde{\phi}_{k}(a)\|\to_{k\to\infty}0$ for all $a\in \widehat{V}_{\ell,n}$. 

\end{proof}

The above two lemmas will play the role of the $L^{2}$-$L^{2}$ isometry we exploited in the proof of Theorem \ref{thm: the Avitzour approach}.

We state our general approach precisely as follows.

\begin{thm}\label{thm: selfless from asy angle conditions}
Let $(A_{1},\rho),(A_{2},\tau)$ be unital $C^{*}$-probability spaces with $\tau$ tracial. Suppose that $A_{1}$ and $A_2$ have the rapid decay property
relative to filtrations $(V_{n,1})_{n=0}^{\infty}$ and $(V_{n,2})_{n=0}^{\infty}$.

Suppose that for each $n\in \bN$ there exist  a sequence of unitaries  $(u_{n,k})_k\in \cU(A_{2})$, and  linear subspaces $(\widehat{F}_{n,k})_k\subseteq A_{2}$, with $1\in \widehat{F}_{n,k}$, which are stable under $*$, and which satisfy:
\begin{itemize}
\item (almost orthogonality) 
\begin{align*}
\sup_{a_{1},a_{2}\in V_{n,2}:\|a_{1}\|_{2},\|a_{2}\|_{2}\leq 1}|\tau(a_{1}u_{n,k}a_{2}u_{n,k})|&\to_{k\to\infty}0,
\\
\sup_{a_{1}\in V_{n,2},a_{2}\in \widehat F_{n,k}:\|a_{1}\|_{2},\|a_{2}\|_{2}\leq 1 }|\tau(a_{1}u_{n,k}a_{2})|&\to_{k\to\infty} 0,
\end{align*}

\item (inflated rapid decay) there are constants $b,\beta>0$ (independent of $n$) such that 
\[
\|x\|\leq b(n+1)^{\beta}\|x\|_{2}
\]
for all $x\in \widehat{F}_{n,k}$ and all $k$.

\item (asymptotic containment), for all $y\in V_{n,2}\ominus \bC 1$ we have 
    \[\lim_{k\to\infty}\max(\inf_{c\in \widehat{F}_{n,k}}\|y-c\|,\inf_{c\in \widehat{F}_{n,k}}\|u_{n,k}^{*}yu_{n,k}-c\|)=0. \]
\end{itemize}
Then $A_{1}*A_{2}$ is selfless. 
\end{thm}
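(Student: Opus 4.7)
The plan is to apply Proposition \ref{wk convergence+ strong semi conv} with $(A,\rho_A)=(A_1*A_2,\rho*\tau)$, $(B,\rho_B)=(A_1,\rho)$, and $C=\bigcup_n V_n$, where $(V_n)_n$ is the filtration of $B_1*_{\mathrm{alg}}B_2*_{\mathrm{alg}}B_3$ (with $B_1=B_3=A_1$, $B_2=A_2$) obtained from Lemma \ref{lem: defining the filtration for RDP}. The maps $\phi_k\colon C\to A_1*A_2$ will be the $*$-homomorphisms $\phi_{u_{n(k),m(k)}}$ of Lemma \ref{lem: combinatorial strucutre of conjugation map}, for a diagonal pair of indices $(n(k),m(k))$ to be specified at the end. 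Because each $\phi_{u_{n,k}}$ is a genuine $*$-homomorphism equal to the identity on the first $A_1$ and on $A_2$, the approximate multiplicativity and the identity-on-$C\cap A$ conditions in Proposition \ref{wk convergence+ strong semi conv} are automatic. The state-preservation condition is immediate from Lemma \ref{lem: asy L2-L2 isometry}(i), and applying the same result to $a^{*}a$ also gives $\|\phi_{u_{n,k}}(a)\|_2\to\|a\|_2$ for every $a\in\widehat V_n$.

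The substantive step is therefore to verify the asymptotic contractivity $\limsup_k\|\phi_k(a)\|\leq \|a\|$. For each fixed $n$, Lemma \ref{lem: asy L2-L2 isometry}(ii) shows that $\phi_{u_{n,k}}(\widehat V_n)$ is approximated in operator norm, as $k\to\infty$, by linear combinations of alternating words of length at most $3n+2$ in $(V_{n,1}\ominus\bC1,F_{n,k})$, where
\[
F_{n,k}=(\id-\tau)(V_{n,2}u_{n,k}+u_{n,k}^{*}V_{n,2}+\widehat F_{n,k}).
\]
By Lemma \ref{lem: RDP estimate on layers for free prod}, it then suffices to bound the individual operator/$L^2$ constants on $V_{n,1}\ominus\bC1$ and on $F_{n,k}$ by a polynomial in $n$ uniformly in large $k$. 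The former is automatic from the rapid decay of $(V_{n,1})_n$. For the latter, I would decompose $x\in F_{n,k}$ as $x=y_1u_{n,k}+u_{n,k}^{*}y_2+y_3$ with $y_1,y_2\in V_{n,2}$ and $y_3\in\widehat F_{n,k}$, and combine traciality with the two almost orthogonality hypotheses to estimate the three cross-inner products $\ip{y_1u_{n,k},u_{n,k}^{*}y_2}=\tau(y_2^{*}u_{n,k}y_1u_{n,k})$, $\ip{y_1u_{n,k},y_3}=\tau(y_3^{*}y_1u_{n,k})$, and $\ip{u_{n,k}^{*}y_2,y_3}$, each by $\ee_k\|y_i\|_2\|y_j\|_2$ with $\ee_k\to 0$. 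This gives $\|x\|_2^2\geq (1-O(\ee_k))(\|y_1\|_2^2+\|y_2\|_2^2+\|y_3\|_2^2)$; combined with $\|y_iu_{n,k}\|_2=\|y_i\|_2$ (traciality), the rapid decay of $V_{n,2}$, and the inflated rapid decay of $\widehat F_{n,k}$, this yields $\|x\|\leq P(n)\|x\|_2$ for a polynomial $P$ independent of $k$.

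The proof concludes by the same power-trick and diagonal extraction as in the endgame of Theorem \ref{thm: the Avitzour approach}. Given $a\in V_m$, one has $(a^{*}a)^{r}\in V_{2mr}\subseteq \widehat V_n$ whenever $n\geq 2mr$, so the polynomial bound above combined with the asymptotic $L^2$-isometry property applied to $(a^{*}a)^r$ yields
\[
\|\phi_{u_{n,k}}(a)\|^{2r}=\|\phi_{u_{n,k}}((a^{*}a)^{r})\|\leq (1+o_k(1))Q(n)\|a\|^{2r}
\]
for some polynomial $Q$ and large $k$. Setting $n=2mr$, extracting a $2r$-th root, passing to $k\to\infty$ along a suitable diagonal choice of $m(k)$, and then letting $r\to\infty$ gives $\limsup_k\|\phi_k(a)\|\leq\|a\|$, and Proposition \ref{wk convergence+ strong semi conv} then delivers selflessness of $A_1*A_2$. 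The main obstacle I anticipate is the decomposition-free bound on $F_{n,k}$: since the sum $V_{n,2}u_{n,k}+u_{n,k}^{*}V_{n,2}+\widehat F_{n,k}$ is in general not direct, the cross-term inequalities must hold intrinsically in $y_1,y_2,y_3$ rather than for a minimal representation of $x$, and the asymmetry in the two almost orthogonality hypotheses is exactly what makes such a uniform bound available.
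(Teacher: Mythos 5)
Your proposal is correct and follows essentially the same route as the paper: the same maps $\phi_{u_{n,k}}$, the same reduction via Lemmas \ref{lem: asy L2-L2 isometry} and \ref{lem: RDP estimate on layers for free prod}, the same cross-term estimate establishing inflated rapid decay on $F_{n,k}$ (your ``main obstacle'' is resolved exactly as you suggest — any decomposition $x=y_1u_{n,k}+u_{n,k}^*y_2+y_3$ works, with the scalar from $\id-\tau$ absorbed into $y_3$ since $1\in\widehat F_{n,k}$ and the $*$-stability of $V_{n,2}$ and $\widehat F_{n,k}$ converting each cross inner product into one of the two hypothesized forms), and the same power trick plus diagonal argument feeding into Proposition \ref{wk convergence+ strong semi conv}.
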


\begin{proof}[Proof of Theorem \ref{thm: selfless from asy angle conditions}]
For each $j=1,2$, choose $c_{j},\alpha_{j},j=1,2$ such that for all $n\in \bN$ and all   $\chi\in V_{n,j}$
\[
\|\chi\|\leq c_{j}(n+1)^{\alpha_{j}}\|\chi\|_{2}.
\]
Adopt notation as in Lemma \ref{lem: combinatorial strucutre of conjugation map} and consider  the *-homomorphisms 
\[
\phi_{n,k}=\phi_{u_{n,k}}\colon 
B_1*_{\textnormal{alg}}B_2*_{\textnormal{alg}}B_3\to A_1*A_2
\]
defined as in Lemma \ref{lem: combinatorial strucutre of conjugation map}. 

Consider the filtration $(V_n)_n$ of $A_{1}*A_{2}*A_{1}$ obtained from Lemma \ref{lem: defining the filtration for RDP} and the filtrations 
$(V_{n,1})_n$, 
$(V_{n,2})_n$, $(V_{n,1})_n$.

For $n,k\in \bN$, let 
\[
F_{n,k}=(\id-\tau)[V_{n,2}u_{n,k}+u_{n,k}^{*}V_{n,2}+\widehat{F}_{n,k}].
\]

For $p,n,k\in \bN$, let $W_{p,n,k}\subseteq A_1*A_2$ be the span of all words of length $p$ alternating in $(V_{n,1}\ominus \bC1,F_{n,k})$. That is, $W_{p,n,k}$ is the span of  words of the form $b_{1}\cdots b_{p}$, such that   
for an alternating $j\colon [p]\to \{1,2\}$ we have
$b_{i}\in V_{n,1}$ if $j(i)=1$, and 
$b_{i}\in F_{n,k}$ if $j(i)=2$.
By Lemma \ref{lem: asy L2-L2 isometry} (\ref{item: norm perturbation}) we may find linear maps $\widetilde{\phi}_{n,k}\colon V_{n}\to \sum_{p=0}^{3n+2}W_{p,n,k}$ satisfying
\begin{equation}\label{eqn: op norm perturbation}\|\phi_{n,k}(a)-\widetilde{\phi}_{n,k}(a)\|\to_{k\to\infty}0, \textnormal{ for all $a\in V_{n}$}.
\end{equation}
Lemma \ref{lem: asy L2-L2 isometry} (\ref{item: asymptotic trace-preserving}) also implies that
\begin{equation}\label{eqn: trace perturbation}(\rho*\tau)(\widetilde{\phi}_{n,k}(a))\to_{k\to\infty}(\rho*\tau*\rho)(a), \textnormal{ for all $a\in V_{n}$}.
\end{equation}

To prove the appropriate asymptotic norm estimate to apply Proposition \ref{wk convergence+ strong semi conv}, we turn to a rapid decay-type property for $W_{p,n,k}$. Choose $k_{0}(n)\in \bN$ such that for all $k\geq k_{0}(n)$ we have:
\begin{itemize}
    \item $|\tau(u_{n,k}a_{1}u_{n,k}a_{2})|\leq \frac16$ for all $a_{1},a_{2}\in V_{n,2}$ with $\|a_{1}\|_{2},\|a_{2}\|_2\leq 1$,
   
    \item $|\tau(a_{1}u_{n,k}a_{2})|\leq \frac{1}{6}$ for all  
    $a_{1}\in V_{n,2}$, $a_{2}\in \widehat{F}_{n,k}$ with $\|a_{1}\|_{2},\|a_{2}\|_2\leq 1$. 
\end{itemize}
Set $C=\max(c_{2},b)$ and $\gamma=\max(\alpha_{1},\beta)$. 

Let $\widehat{y}\in V_{n,2}u_{n,k}+u_{n,k}^{*}V_{n,2}+\widehat{F}_{n,k}$, and choose $x_{1},x_{2}\in V_{n,2}$
and $x_{3}\in \widehat{F}_{n,k}\ominus \bC 1$ so that $\widehat{y}=x_{1}u_{n,k}+u_{n,k}^{*}x_{2}+x_{3}$. Then
\[
\|\widehat{y}\|\leq \|x_{1}\|+\|x_{2}\|+\|x_{3}\|\leq C(n+1)^{\gamma}(\|x_{1}\|_{2}+\|x_{2}\|_{2}+\|x_{3}\|_{2}).
\]
Set $y_{1}=x_{1}u_{n,k}$, $y_{2}=u_{n,k}^{*}x_{2}$, $y_{3}=x_{3}$. Since $V_{n,2}$ and $\widehat{F}_{n,k}$ are stable under $*$, the above bullet points imply that for all $k\geq k_{0}(n)$: 
\begin{align*}
\|\widehat{y}\|_{2}^{2}=\sum_{j=1}^{3}\|x_{j}\|_{2}^{2}+\sum_{i,j\in [3]:i\ne j}\tau(y_{j}^{*}y_{i})&\geq \sum_{j=1}^{3}\|x_{j}\|_{2}^{2}-\frac{1}{6}\sum_{i,j\in [3]:i\ne j}\|x_{i}\|_{2}\|x_{j}\|_{2}\\
&\geq  \sum_{j=1}^{3}\|x_{j}\|_{2}^{2}-\frac{1}{6}\left(\sum_{j=1}^{3}\|x_{j}\|_{2}\right)^{2}\\
&\geq \frac{1}{2}\sum_{j=1}^{3}\|x_{j}\|_{2}^{2},
\end{align*}
the last line following by Cauchy-Schwarz. So we have shown 
\begin{equation}\label{eqn:pseudo proj}
\max_{i=1,2,3}\|x_{i}\|_{2}\leq 2\|\widehat{y}\|_{2}.
\end{equation}
 We see that:
\[|\tau(\widehat{y})|\leq \frac{1}{6}(\|x_{1}\|_{2}+\|x_{2}\|_{2})\leq \frac{2}{3}\|\widehat{y}\|_{2},\]
where in the last step we apply (\ref{eqn:pseudo proj}).
Thus:
\[\|(\id-\tau)(\widehat{y})\|_{2}^{2}=\|\widehat{y}\|_{2}^{2}-|\tau(\widehat{y})|^{2}\geq \frac{5}{9}\|\widehat{y}\|_{2}^{2}.\]
Combining with (\ref{eqn:pseudo proj}), it follows that 
\begin{align*}
\|(\id-\tau)(\widehat{y})\|\leq 2\|\widehat{y}\|
&\leq 2C(n+1)^{\gamma}\sum_{j=1}^{3}\|x_{j}\|_{2}\\
&\leq 12C(n+1)^{\gamma}\|\widehat{y}\|_{2}\\
&\leq \frac{36}{\sqrt{5}}C(n+1)^{\gamma}\|(\id-\tau)(\widehat{y}
)\|_{2}.
\end{align*}
In other words,
\begin{align*}\label{eqn: new inflated rapid decay}
\|y\|\leq   \frac{36}{\sqrt{5}}C(n+1)^{\gamma}\|y\|_{2} \textnormal{ for all $y\in F_{n,k}$ if $k\geq k_{0}(n)$.}  
\end{align*}
Set $B=2\max(c_{1},\frac{36}{\sqrt{5}}C)$ and $\sigma=\max(\alpha_{1},\gamma)$, it follows from Lemma \ref{lem: RDP estimate on layers for free prod} that 
\[\|x\|\leq B(p+1)(n+1)^{\sigma}\|x\|_{2} \textnormal{ for all $x\in W_{p,n,k},$ if $k\geq k_{0}(n)$.} \]
Let $\widetilde{x}\in \sum_{p=0}^{3n+2}W_{p,n,k}$ and write $\widetilde{x}=\sum_{p=0}^{3n+2}x_{p}$. Since the $x_{p}$ are pairwise orthogonal, we have 
\[\|\widetilde{x}\|\leq B(n+1)^{\sigma}\left(\sum_{p=0}^{3n+2}(p+1)\|x_{p}\|_{2}\right)\leq 3B(n+1)^{\sigma+3/2}\|\widetilde{x}\|_{2}, \textnormal{ if $k\geq k_{0}(n)$.}\]

Now fix $d\in V_{n}$, and $r\in \bN$. Then $(d^{*}d)^{r}\in V_{2nr}$. Thus
\[\limsup_{k\to\infty}\|\phi_{4nr,k}(d)\|^{2r}=\limsup_{k\to\infty}\|\phi_{4nr,k}((d^{*}d)^{r})\|=\limsup_{k\to\infty}\|\widetilde{\phi}_{4nr,k}((d^{*}d)^{r})\|,\]
where in the last step we use (\ref{eqn: op norm perturbation}).
For $k\geq k_{0}(4nr)$ we have 
\[\|\widetilde{\phi}_{4nr,k}((d^{*}d)^{r})\|\leq 3B(2nr+1)^{\sigma+3/2}\|\widetilde{\phi}_{4nr,k}((d^{*}d)^{r})\|_{2}.\]
Using (\ref{eqn: op norm perturbation}) again we have that 
\begin{align*}
\lim_{k\to\infty}\|\widetilde{\phi}_{4nr,k}((d^{*}d)^{r})\|_{2}=\lim_{k\to\infty}\|\phi_{4nr,k}((d^{*}d)^{r})\|_{2}&=\lim_{k\to\infty}(\rho*\tau)(\phi_{4nr,k}((d^{*}d)^{4r})^{1/2}\\
&=\|(d^{*}d)^{r}\|_{2}\leq \|d\|^{2r}.    
\end{align*}
with the second-to-last equality following by Lemma \ref{lem: asy L2-L2 isometry} (\ref{item: asymptotic trace-preserving}).
Altogether we have shown  
\[\limsup_{k\to\infty}\|\phi_{4nr,k}(d)\|\leq (3B)^{1/2r}(2nr+1)^{(\sigma+3/2)/2r}\|d\|.\]
So
\[\limsup_{r\to\infty}\limsup_{k\to\infty}\|\phi_{4nr,k}(d)\|\leq \|d\|, \textnormal{ for all $d\in V_{n}$.}\]
 Applying Proposition \ref{wk convergence+ strong semi conv} and a diagonal argument shows $A_{1}*A_{2}$ is selfless. 
\end{proof}

\section{Free products of separable $C^{*}$-algebras with rapid decay}\label{fun continues}

Recall that a tracial von Neumann algebra is a pair $(M,\tau)$ where $M$ is a von Neumann algebra and $\tau\in M^{*}$ is a faithful, normal, tracial state.

Recall that if $\omega$ is  free ultrafilter on $\bN$ and $(M_{n},\tau_{n})$ are tracial von Neumann algebras, then their \emph{tracial ultraproduct} is the von Neumann algebra
\[\frac{\{(x_{n})_{n}\in \prod_{n}M_{n}:\sup_{n}\|x_{n}\|<+\infty\}}{\{(x_{n})_{n}\in \prod_{n}M_{n}:\sup_{n}\|x_{n}\|<+\infty,\lim_{n\to\omega}\|x_{n}\|_{2}=0\}}.\]
If $(M,\tau)$ is a tracial von Neumann algebra, its \emph{central sequence algebra} is $M'\cap \cM$ where $\cM$ is the tracial ultrapower of $M$.

The goal of this section is to use  Theorem \ref{thm: selfless from asy angle conditions} to prove Theorem \ref{mainthm2} from the introduction, which we restate here:

\begin{thm}\label{thm:free prod RDP selfless}
Let $(A_{1},\rho),(A_{2},\tau)$ be unital $C^{*}$-probability spaces with $\tau$ tracial, 
$\dim_{\bC}(A_{j})>1$ for $j=1,2$, and $A_{2}$ separable. Suppose that $A_{1},A_{2}$ have filtrations with the rapid decay property.
Suppose that either:
\begin{itemize}
    \item the GNS completion of $A_{2}$ with respect to $\tau$ is a $\textrm{II}_{1}$-factor, or
    \item the GNS completion of $A_{2}$ with respect to $\tau$ has diffuse central sequence algebra (e.g. if the GNS completion of $A_{2}$ has diffuse center). 
\end{itemize}
Then $A_{1}*A_{2}$ is selfless.  
\end{thm}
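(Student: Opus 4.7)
The plan is to apply the blueprint Theorem \ref{thm: selfless from asy angle conditions}. Let $M$ denote the GNS completion of $(A_{2},\tau)$; by Proposition \ref{RDfaithfulGNS} the trace $\tau$ is faithful, so $(M,\tau)$ is a tracial von Neumann algebra, separable since $A_{2}$ is. By Corollary \ref{cor: complete the filtration} each $V_{n,2}$ is $\|\cdot\|_{2}$-closed; by rapid decay it is also norm-closed and sits continuously inside $M$. I fix a countable $\|\cdot\|_{2}$-dense subset $D_{n}\subseteq V_{n,2}$ for each $n$. My approach, in either case, is to produce a useful unitary $\widetilde u$ in an ultrapower of $M$, lift it to a sequence $(\widetilde u_{k})_{k}$ in $\cU(M)$ with the required asymptotic orthogonality relative to $M$, and then combine Kaplansky density with a diagonal extraction against $D_{n}$ to transfer to unitaries $u_{n,k}\in\cU(A_{2})$ satisfying the uniform orthogonality estimates demanded by Theorem \ref{thm: selfless from asy angle conditions}.

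In Case 1 (diffuse central sequence algebra), I fix a normal unital embedding $L^{\infty}([-\tfrac{1}{2},\tfrac{1}{2}])\hookrightarrow M'\cap M^{\omega}$ and take $\widetilde u$ to be the image of the Haar unitary $t\mapsto e^{2\pi i t}$. Any representing sequence $(\widetilde u_k)_k$ asymptotically commutes with $M$ pointwise in $\|\cdot\|_{2}$ and satisfies $\tau^{\omega}(\widetilde u^{j})=0$ for $j=1,2$. After Kaplansky-density approximation and a diagonal extraction, the unitaries $u_{n,k}$ asymptotically commute with $V_{n,2}$ along $D_{n}$ and satisfy $\tau(u_{n,k}^{j})\to 0$. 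Setting $\widehat F_{n,k}=V_{n,2}\ominus\bC 1$, the almost orthogonality follows from traciality combined with $\tau(a_{1}u_{n,k}a_{2}u_{n,k})\approx\tau(a_{1}a_{2}u_{n,k}^{2})$ and $\tau(a_{1}u_{n,k}a_{2})\approx\tau(a_{1}a_{2}u_{n,k})$; asymptotic containment is exactly asymptotic commutation; and the inflated rapid decay is simply the rapid decay hypothesis on $(V_{n,2})_{n}$.

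In Case 2 ($M$ a $\mathrm{II}_{1}$-factor), I invoke Popa's theorem \cite{popa1995free} to produce a Haar unitary $\widetilde u\in M^{\omega}$ that is $*$-free from $M$ with respect to $\tau^{\omega}$. Freeness yields, for $a,b\in M\ominus\bC 1$, that $\tau(a\widetilde u^{j}b)\to 0$ for $j=1,2$ and the crucial orthogonality $\tau(a\widetilde u^{*}b\widetilde u)\to 0$. Again after Kaplansky approximation and a diagonal extraction against $D_n$, I obtain $u_{n,k}\in\cU(A_{2})$ inheriting the corresponding asymptotic orthogonalities, and I set
\[
\widehat F_{n,k}=u_{n,k}^{*}(V_{n,2}\ominus\bC 1)u_{n,k}+(V_{n,2}\ominus\bC 1),
\]
for which almost orthogonality and asymptotic containment are immediate.

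The main obstacle is verifying the inflated rapid decay of $\widehat F_{n,k}$ in Case 2. For $z=u_{n,k}^{*}x_{1}u_{n,k}+x_{2}$ with $x_{j}\in V_{n,2}\ominus\bC 1$, traciality together with the key orthogonality $\tau(u_{n,k}^{*}x_{1}u_{n,k}x_{2}^{*})\to 0$ gives
\[
\|z\|_{2}^{2}\geq \tfrac{1}{2}\bigl(\|x_{1}\|_{2}^{2}+\|x_{2}\|_{2}^{2}\bigr)
\]
for $k$ large relative to $n$, while the rapid decay of $(V_{n,2})_{n}$ yields $\|z\|\leq\|x_{1}\|+\|x_{2}\|\lesssim (n+1)^{\alpha}(\|x_{1}\|_{2}+\|x_{2}\|_{2})\lesssim (n+1)^{\alpha}\|z\|_{2}$. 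The delicate point is upgrading the pointwise cross-term vanishing to a bound uniform over the (possibly infinite-dimensional) $\|\cdot\|_2$-unit ball of $V_{n,2}$; this is achieved via the bilinear structure of $(x_{1},x_{2})\mapsto\tau(u_{n,k}^{*}x_{1}u_{n,k}x_{2}^{*})$, a finite $\|\cdot\|_2$-net inside $V_{n,2}$, and the diagonal extraction against $D_{n}$. Once all three bullet points of Theorem \ref{thm: selfless from asy angle conditions} are established, that theorem yields selflessness of $A_{1}*A_{2}$.
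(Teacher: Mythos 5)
Your overall architecture matches the paper's: reduce to Theorem \ref{thm: selfless from asy angle conditions}, use Popa's free Haar unitary in the factor case and an embedded copy of $L^\infty([-1/2,1/2])$ in the diffuse central sequence case, with the same choices of $\widehat F_{n,k}$ and essentially the same inflated-rapid-decay computation. However, there are two genuine gaps. First, your upgrade from pointwise to uniform vanishing of the almost-orthogonality quantities over the $\|\cdot\|_2$-unit ball of $V_{n,2}$ relies on ``a finite $\|\cdot\|_2$-net inside $V_{n,2}$'', which exists only when $V_{n,2}$ is finite-dimensional; a uniformly bounded sequence of bilinear forms tending to zero pointwise need not tend to zero uniformly on an infinite-dimensional unit ball (take $B_k(x,y)=\ip{x,e_k}\ip{y,e_k}$). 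The paper's proof therefore begins with a reduction, using separability of $A_2$, to a rapid-decay filtration all of whose pieces are finite-dimensional, after which compactness and equicontinuity give the uniform estimates; this step is missing from your argument and cannot be dispensed with.

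Second, in the diffuse central sequence case the asymptotic containment hypothesis of Theorem \ref{thm: selfless from asy angle conditions} with $\widehat F_{n,k}=V_{n,2}\ominus\bC 1$ demands $\|u_{n,k}^{*}yu_{n,k}-y\|\to 0$ in \emph{operator} norm. Your route --- a central sequence in $\cU(\cM)$ followed by Kaplansky density --- only yields $\|\cdot\|_2$-asymptotic commutation, which is not sufficient. The paper instead lifts the self-adjoint generator through the surjection $A_2'\cap A_2^{\omega}\to M'\cap\cM$ of Kirchberg--R{\o}rdam and exponentiates inside $A_2$, so that the resulting unitaries commute asymptotically with $A_2$ in operator norm. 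Relatedly, the conditions you extract, $\tau^{\omega}(\widetilde u^{\,j})=0$ for $j=1,2$, are weaker than what the almost-orthogonality bullet requires, namely $\tau^{\omega}(\widetilde u^{\,j}x)=0$ for all $x\in M$; for a central sequence this does not follow (consider $\widetilde u$ lying in a diffuse center), and the paper obtains it by passing to high powers of the lifted unitary and invoking the Riemann--Lebesgue lemma together with normality of the embedding, followed by a diagonal argument. In the $\mathrm{II}_1$-factor case your freeness-based estimates are correct and the containment is exact, so that half of the argument coincides with the paper's modulo the first gap.
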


 Let us first handle the case where the GNS completion of $A$ is a $\textrm{II}_{1}$-factor.

 \begin{prop}\label{prop: how to handle factorial case}
  Let $(A,\tau)$ be a unital, separable tracial $C^{*}$-probability space, and suppose that the GNS completion of $A$ with respect to $\tau$ is a $\textrm{II}_{1}$-factor. Then there is a sequence of unitaries $(u_{k})_{k=0}^{\infty}$ in $A$ such that 
  \begin{itemize}
      \item $\tau(u_{k}a)\to 0$ for all $a\in A$, 
      \item $\tau(u_{k}au_{k}b)\to 0$ for all $a,b\in A$, and
      \item $\tau(u_{k}^{*}xu_{k}y)\to 0$ for all $x,y\in A\ominus \bC 1$.
  \end{itemize}
 \end{prop}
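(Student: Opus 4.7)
The plan is to invoke Popa's theorem on free Haar unitaries in the ultrapower to produce a candidate $u$, verify the three conditions inside $M^{\omega}$ by expanding into centered parts and using freeness, and then push $u$ down to $A$ via Kaplansky density, converting ultrafilter limits to sequential limits through a diagonalization using separability of $A$.

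Let $M$ be the GNS completion of $A$ with respect to $\tau$, a separable $\textrm{II}_{1}$-factor carrying a faithful normal trace still denoted $\tau$. Fix a free ultrafilter $\omega$ on $\bN$ and let $(M^{\omega},\tau^{\omega})$ be the tracial ultrapower, with $M\subseteq M^{\omega}$ sitting diagonally. By Popa's theorem \cite{popa1995free}, there is a Haar unitary $u\in M^{\omega}$ freely independent from $M$. Lift $u$ to a bounded sequence of unitaries $(v_{n})_{n}\subseteq M$; this is possible since unitaries in $M^{\omega}$ lift to sequences of unitaries in $M$ (any bounded lift has defect tending to $0$ in $\|\cdot\|_{2}$ along $\omega$, and one can then pass to the unitary part of its polar decomposition).

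I next verify the three convergence statements hold for $(v_{n})_{n}$ along $\omega$. For $a\in M$, write $a=\tau(a)1+a_{0}$ with $a_{0}\in M\ominus\bC1$; then $\tau^{\omega}(ua)=\tau(a)\tau^{\omega}(u)+\tau^{\omega}(ua_{0})=0$, using $\tau^{\omega}(u^{k})=0$ for $k\ne 0$ and the fact that $ua_{0}$ is an alternating centered word of length two in the free pair $(W^{*}(u)\ominus\bC1,\,M\ominus\bC1)$. For $a,b\in M$, centering $a$ and $b$ yields
\[
\tau^{\omega}(uaub)=\tau(a)\tau(b)\tau^{\omega}(u^{2})+\tau(a)\tau^{\omega}(u^{2}b_{0})+\tau(b)\tau^{\omega}(ua_{0}u)+\tau^{\omega}(ua_{0}ub_{0}),
\]
and each summand is either $\tau^{\omega}$ of a nonzero power of $u$ or the trace of an alternating centered word in the free pair above, hence vanishes. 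Finally, for $x,y\in M\ominus\bC1$ the word $u^{*}xuy$ is itself alternating centered of length four, so $\tau^{\omega}(u^{*}xuy)=0$.

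To transport unitaries into $A$, apply Kaplansky density: since $A$ is $\sigma$-weakly dense in $M$, $\cU(A)$ is $\|\cdot\|_{2}$-dense in $\cU(M)$, so I may choose $w_{n}\in\cU(A)$ with $\|w_{n}-v_{n}\|_{2}<1/n$. A Cauchy-Schwarz estimate such as
\[
|\tau(v_{n}av_{n}b)-\tau(w_{n}aw_{n}b)|\leq 2\|v_{n}-w_{n}\|_{2}\|a\|\|b\|,
\]
(and the analogous bounds for the other two expressions) transfers the three vanishing ultrafilter limits from $(v_{n})_{n}$ to $(w_{n})_{n}$. Using separability of $A$, fix a countable dense subset $\{a_{m}\}_{m}$ of the unit ball of $A$ and set
\[
\epsilon_{n,K}=\max_{m,m'\le K}\bigl(|\tau(w_{n}a_{m})|,\;|\tau(w_{n}a_{m}w_{n}a_{m'})|,\;|\tau(w_{n}^{*}a_{m}w_{n}a_{m'})|\bigr);
\]
since $\lim_{n\to\omega}\epsilon_{n,K}=0$ for each $K$, choose a strictly increasing sequence $n_{K}$ with $\epsilon_{n_{K},K}<1/K$. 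Then $u_{K}:=w_{n_{K}}\in\cU(A)$ satisfies the three vanishing conditions on the dense set, and a routine density/Cauchy-Schwarz argument extends this to all $a,b\in A$ and $x,y\in A\ominus\bC1$. The only substantive ingredient is Popa's theorem; the remaining steps---centering and freeness, Kaplansky density, and the diagonalization---are routine and present no serious obstacle.
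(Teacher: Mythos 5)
Your argument is correct in substance and reaches the same conclusion as the paper, but by a somewhat different lifting mechanism. The paper writes the Popa Haar unitary as $u=e^{ih}$ with $h=h^{*}$ in the tracial ultrapower, lifts $h$ directly to a uniformly bounded sequence in $A_{\mathrm{s.a.}}$ using the surjectivity of the C$^*$-ultrapower onto the tracial ultrapower (Kirchberg--R{\o}rdam), and exponentiates, so the lifted unitaries land in $A$ in one step. You instead lift $u$ to unitaries of $M$ and then invoke Kaplansky density to replace them, up to small $\|\cdot\|_{2}$-error, by unitaries of $A$, transferring the vanishing conditions by Cauchy--Schwarz. Both routes ultimately rest on the same density phenomenon (your Kaplansky step is what powers the Kirchberg--R{\o}rdam surjectivity the paper cites), and your perturbation estimates such as $|\tau(v_{n}av_{n}b)-\tau(w_{n}aw_{n}b)|\leq 2\|v_{n}-w_{n}\|_{2}\|a\|\|b\|$ are valid. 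The freeness computations in the ultrapower match the paper's.

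One slip to fix in the diagonalization: as written, $\epsilon_{n,K}$ includes $|\tau(w_{n}^{*}a_{m}w_{n}a_{m'})|$ for arbitrary $a_{m},a_{m'}$ in the unit ball, but the third condition only holds for \emph{centered} elements; taking $a_{m}=a_{m'}=1$ gives $\tau(w_{n}^{*}w_{n})=1$, so $\epsilon_{n,K}$ as defined does not tend to $0$ along $\omega$. Replace $a_{m},a_{m'}$ by $a_{m}-\tau(a_{m})1$ and $a_{m'}-\tau(a_{m'})1$ in that term (the first two terms are fine as stated). With this adjustment the diagonalization goes through, and the extension from the dense set to all of $A$ (resp.\ $A\ominus\bC 1$) follows from the uniform Lipschitz bounds you already have.
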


 \begin{proof}
 Let $M$ be the GNS completion of $A$ with respect to $\tau$. We continue to use $\tau$ for the trace on $M$. Fix an ultrafilter $\omega\in \beta\bN\setminus \bN$ and let $(\cM,\tau_{\omega})$ be the tracial ultrapower of $(M,\tau)$.  By separability of $A$, we can apply \cite{popa1995free} to find a Haar unitary $u\in \cU(\cM)$ which is free from $M$. Write $u=e^{ih}$ with $h=h^{*}\in \cM$. By \cite[Theorem 3.3]{KRCentralsequence} we may lift $a$ to  a uniformly bounded sequence $(h_{k})_{k}\in A_{s.a.}$ Set $u_{k}=e^{ih_{k}}$, such that $u_{k}\in \cU(A)$. Since continuous functional calculus commutes with $*$-homomorphisms, we have that $(u_{k})_{k}$ is a lift of $u.$
Thus:
 \[\lim_{k\to\omega}\tau(u_{k}au_{k}b)=\tau_{\omega}(uaub)=0.\]
 Similarly,
 \[\lim_{k\to\omega}\tau(u_{k}^{*}xu_{k}y)=0, \textnormal{ and } \lim_{k\to\omega}\tau(u_{k}a)=0,\]
 for all $x,y\in M\ominus \bC 1$, $a\in M$.

Applying separability of $A$ and a diagonal argument now completes the proof.

 \end{proof}

\begin{prop}\label{prop: dealing with the diffuse center case}
 Let $(A,\tau)$ be a unital tracial $C^{*}$-probability space with $A$ separable.  Let $M$ be the GNS completion of $A$ with respect to $\tau$. For $\omega\in \beta\bN\setminus\bN$, let $\cM$ be the tracial ultrapower of $M$ with respect to $\omega$. Suppose that $M'\cap \cM$ is diffuse.

Then there is a sequence of unitaries $u_{k}\in \cU(A)$ such that 
\begin{itemize}
     \item $\|u_{k}x-xu_{k}\|\to_{k\to\infty}0$ for all $x\in A$,
     \item  $\tau(u_{k}x)\to 0$ for all $x\in A$,
     \item $\tau(u_{k}xu_{k}y)\to 0$ for all $x,y\in A$. 
 \end{itemize}
 
\end{prop}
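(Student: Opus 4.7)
The plan is to mirror the proof of Proposition \ref{prop: how to handle factorial case}, using the diffuseness hypothesis on $M' \cap \mathcal M$ in place of Popa's theorem.

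\emph{Construction of $u \in M' \cap \mathcal M$.} Diffuseness yields a unital normal embedding $\pi\colon L^\infty([-1/2,1/2], dt) \hookrightarrow M' \cap \mathcal M$. The trace-preserving conditional expectation $E_M\colon \mathcal M \to M$ restricts on $M' \cap \mathcal M$ to a conditional expectation onto $Z(M)$. Using a maximality/perturbation argument (exploiting that $M \cap (M' \cap \mathcal M) = Z(M)$ and that $M' \cap \mathcal M$ is a finite von Neumann algebra properly containing $Z(M)$), I would replace $\pi$ by an embedding whose range is $L^2$-orthogonal to $Z(M)$. Setting $u := \pi(e^{2\pi i t}) \in \mathcal U(M' \cap \mathcal M)$ then yields $E_M(u^n) = 0$ for every nonzero integer $n$.

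\emph{Lifting and trace identities.} Write $u = e^{ih}$ with $h = h^* \in \mathcal M$ and $\|h\| \le \pi$ via continuous functional calculus. Apply \cite[Theorem 3.3]{KRCentralsequence}, exactly as in Proposition \ref{prop: how to handle factorial case}, to lift $h$ to a uniformly bounded sequence $(h_k)_k \subseteq A_{s.a.}$. Setting $u_k := e^{ih_k} \in \mathcal U(A)$, continuous functional calculus commuting with $*$-homomorphisms ensures $(u_k)_k$ represents $u$ along $\omega$. Since $u \in M' \cap \mathcal M$, this gives $\lim_{k\to\omega}\|u_k x - x u_k\|_2 = 0$ for every $x \in A$; and for $x,y \in A$,
\begin{align*}
\lim_{k\to\omega}\tau(u_k x) &= \tau_\omega(u x) = \tau(E_M(u)\, x) = 0, \\
\lim_{k\to\omega}\tau(u_k x u_k y) &= \tau_\omega(u^2 xy) = \tau(E_M(u^2)\, xy) = 0,
\end{align*}
where the second line uses $u \in M' \cap \mathcal M$ to rewrite $u x u y = u^2 xy$ in $\mathcal M$ and the $M$-linearity of $E_M$ to extract the central factor. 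Separability of $A$ together with a standard diagonal extraction then produces a sequence indexed by $\mathbb{N}$.

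\emph{Main obstacle.} The first bullet requires \emph{operator-norm} commutation, whereas the lift above only yields $L^2$-commutation. I would upgrade via a two-step approximation: first approximate $u$ in $\mathcal M$ by finite-spectrum unitaries $u^{(N)} = \sum_j \zeta_j p_j^{(N)}$ with $p_j^{(N)} \in M' \cap \mathcal M$ pairwise orthogonal projections and $\zeta_j$ suitable roots of unity, and then lift each partition $\{p_j^{(N)}\}_j$ to a partition of unity in $A$ that is asymptotically central in operator norm. The operator-norm centrality of the projection lift is the technical heart of the argument, as it does not follow from standard Kaplansky density; it would combine a $C^*$-algebraic projection-perturbation argument with the fact that $[p_j^{(N)}, x] = 0$ already holds in $\mathcal M$ for every $x \in M$. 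A final diagonalization over $N$ and over a countable dense subset of $A$ then produces the desired sequence satisfying all three bullets simultaneously.
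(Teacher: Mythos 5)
There are two genuine gaps here, and each one is precisely a point where the paper's proof takes a different (and necessary) turn. First, your construction hinges on arranging $E_M(u^n)=0$ (you use $n=1,2$) by perturbing the embedding $\pi$ so that its range is $L^2$-orthogonal to $Z(M)$. This is not justified by a ``maximality/perturbation argument'' and is false at the level of generality at which you invoke it: writing $P=M'\cap\cM$ and $B=Z(M)\subseteq Z(P)$, the restriction of $E_M$ to $P$ is the $B$-valued trace, and asking for a unitary with $E_M(u)=E_M(u^2)=0$ amounts to asking that the fibers of $P$ over the spectrum of $B$ support such a unitary. Diffuseness of $P$ does not give this --- e.g.\ for $P=B\oplus B$ with $B$ diffuse and embedded diagonally, $E(u)=0$ forces $u=(f,-f)$ and then $E(u^2)=f^2\neq 0$. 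You would need a nontrivial structural result about central sequence algebras to rule this out. The paper sidesteps the issue entirely: it takes $h=\Psi(t\mapsto 2\pi t)$ with $\Psi$ trace-preserving, uses the Riemann--Lebesgue lemma to get $e^{2\pi i n h}\to 0$ weakly as $n\to\infty$, and runs an iterated limit $\lim_{n}\lim_{k\to\omega}$ over the powers $u_k^n$ followed by a diagonal argument. No orthogonality to $Z(M)$ is ever needed.

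Second, you correctly identify that your lift only gives $\|\cdot\|_2$-asymptotic commutation, but the proposed repair (finite-spectrum approximation of $u$ plus an operator-norm-central lift of a partition of unity) is exactly the hard part and is left unproven; as stated it is not a proof. The paper gets operator-norm commutation for free by using \cite[Theorem 3.3]{KRCentralsequence} in its full strength: the natural map $q\colon A'\cap\cA\to M'\cap\cM$ from the $C^*$-ultrapower central sequence algebra onto the von Neumann one is surjective, so $h$ lifts to a self-adjoint $\widetilde h\in A'\cap\cA$ (normalized to $\|\widetilde h\|\leq 1/2$ by functional calculus), and any representing sequence $(\widetilde h_k)_k$ then yields unitaries $u_k^n=e^{2\pi i n\widetilde h_k}$ that asymptotically commute with $A$ in operator norm. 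You cite the same theorem but only use it as a bounded self-adjoint lifting from $\cM$, as in the II$_1$-factor case, which is why the first bullet point escapes you.
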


\begin{proof}
Let $\cA$ be the $C^{*}$-ultrapower of $A$ with respect to $\omega$.
 By \cite[Theorem 3.3]{KRCentralsequence}, the natural $*$-homomorphism $q\colon A'\cap \cA\to M'\cap \cM$ is surjective. Since $M'\cap\cM$ is diffuse, we can find an injective, normal $*$-homomorphism $\Psi\colon L^{\infty}([-1/2,1/2])\to M'\cap \cM$ such that $\tau^{\omega}(\Psi(f))=\int_{-1/2}^{1/2}f(x)\,dx$ for all $f\in L^{\infty}([-1/2,1/2])$ (this is a folklore result, see e.g \cite[Chapter 3]{anantharaman-popa} or \cite[Proposition A.1]{LewisFrankMET}). 
Set $h=\Psi(t\mapsto 2\pi t)$. Since $q$ is surjective, we may choose a self-adjoint element $\widetilde{h}\in A'\cap \cA$ with $q(\widetilde{h})=h$. Replacing $\widetilde{h}$ with $f(\widetilde{h})$ where $f\colon \bR\to [-1/2,1/2]$ is continuous and the identity on $[-1/2,1/2]$, we may assume that $\|\widetilde{h}\|\leq 1/2$. Lift $\widetilde{h}$ to a sequence of self-adjoint elements $(\widetilde{h}_{k})_{k}$ in $A$. Set $u_{k}=e^{2\pi i \widetilde{h}_{k}}$. Since continuous functional calculus commutes with $*$-homomorphisms, we see that $(e^{2\pi in\widetilde{h}_{k}})_{k}$ is a lift of $e^{2\pi in \widetilde{h}}$ for every $n$ in $\bN$. Moreover $e^{2\pi in\widetilde{h}}\in A'\cap\cA$, since $A'\cap \cA$ is a unital $C^{*}$-algebra. By the Riemann-Lebesgue lemma and normality of $\Psi$, we have  $q(e^{2\pi in \widetilde{h}})=e^{2\pi i nh}\to_{n\to\infty}0$ in the weak operator topology acting on $L^{2}(\cM)$. Thus:
\begin{itemize}
    \item $\lim_{k\to\omega}\|xu_{k}^{n} -u_{k}^{n}x\|=\|xe^{2\pi i n\widetilde{h}}-e^{2\pi i n\widetilde{h}}x\|=0,$ \textnormal{ for all $n\in \bN$, and all $x\in A$},
    \item $\lim_{n\to\infty}\lim_{k\to\omega}\tau(u_{k}^{n}x)=\lim_{n\to\infty}\tau(e^{2\pi inh}x)=0$, \textnormal{ for all $x\in A$},
    \item $\lim_{n\to\infty}\lim_{k\to\omega}\tau(u_{k}^{n}xu_{k}^{n}y)=\lim_{n\to\infty}\tau(e^{4\pi i n h}xy)=0,$ for all $x,y\in A$. 
\end{itemize}
Separability of $A$ now allows us to apply a diagonal argument to complete the proof.

\end{proof}

We mention the following examples of von Neumann algebras which have diffuse central sequence algebra with respect to any normal trace.

\begin{prop}\label{prop: examples with diffuse central sequence}
Let $(M,\tau)$ be a tracial von Neumann algebra. Suppose that for every minimal nonzero central projection $z$ we have that $Mz$ is not full. Then $M$ has diffuse central sequence algebra.
\end{prop}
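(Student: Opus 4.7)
The plan is to decompose $M$ via the diffuse-versus-atomic decomposition of its center. Write $Z(M) = Z_{d}\oplus Z_{a}$, where $Z_{a}$ is the atomic summand, spanned by the minimal nonzero central projections $\{z_{i}\}_{i\in I}$ (the index set $I$ possibly empty), and $Z_{d}$ is the complementary diffuse summand, with support projection $z_{d}\in Z(M)$. Each such central projection lies in $M'\cap \cM$ via the diagonal inclusion, producing the decomposition
\[
M'\cap \cM = (M'\cap \cM)z_{d}\ \oplus\ \bigoplus_{i\in I}(M'\cap \cM)z_{i},
\]
and since a direct sum of von Neumann algebras is diffuse iff each summand is diffuse, it suffices to verify diffuseness of each summand.

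For the atomic pieces, the minimality of $z_{i}$ makes $Mz_{i}$ a factor, and a routine identification gives $(M'\cap \cM)z_{i} = (Mz_{i})'\cap (Mz_{i})^{\omega}$, the central sequence algebra of $Mz_{i}$. By hypothesis $Mz_{i}$ is not full, and since finite-dimensional matrix algebras $M_{n}(\bC)$ are full (their tracial ultrapower equals themselves, with trivial relative commutant), $Mz_{i}$ must be a $\textrm{II}_{1}$-factor. By the classical theorem of Connes, the central sequence algebra of a non-full $\textrm{II}_{1}$-factor is diffuse (it has no minimal projections).

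For the diffuse-center piece the plan rests on the following sub-claim: \emph{a tracial von Neumann algebra $(N,\tau)$ containing a unital diffuse abelian von Neumann subalgebra $A$ has no minimal projections.} Granted this, the inclusion $Z_{d}\hookrightarrow (M'\cap \cM)z_{d}$ (with $z_{d}$ the common unit) places $Z_{d}$ as such a subalgebra, giving diffuseness of $(M'\cap \cM)z_{d}$. To prove the sub-claim, suppose $p\in N$ is a minimal projection. Since $pNp = \bC p$ and each $a\in Z(N)$ commutes with $p$, one has $ap = \alpha(a)p$ for some $\alpha(a)\in\bC$. The central support $z(p) = \bigvee_{u\in\mathcal U(N)}upu^{*}$ then satisfies $a\cdot z(p)=\alpha(a)z(p)$ for every $a\in Z(N)$, so $z(p)$ is a minimal central projection and $Nz(p)$ is a tracial type I factor, hence $Nz(p)\cong M_{n}(\bC)$ for some finite $n$. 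But then $Az(p)\subseteq Nz(p)$ is finite-dimensional, while $Az(p)$ is also a nonzero quotient of the diffuse abelian algebra $A$, and such quotients are necessarily diffuse (hence infinite-dimensional), a contradiction.

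The main obstacle is the sub-claim, whose content is that a diffuse commutative subalgebra ``spreads out'' enough to rule out any minimal projection in the ambient tracial algebra. The central support argument above does this by converting a hypothetical minimal projection into a central atom, which is then incompatible with the diffuseness of $A$. Everything else is a clean direct-sum reduction combined with Connes' theorem on non-full $\textrm{II}_{1}$-factor central sequence algebras.
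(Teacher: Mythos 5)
Your proof is correct. The skeleton is the same as the paper's: split $Z(M)$ into its atomic and diffuse parts, observe that each minimal central projection $z_i$ gives a tracial factor $Mz_i$ which, being non-full, must be a $\textrm{II}_1$-factor with diffuse central sequence algebra (the paper cites Dixmier's 1969 result here rather than Connes, but the content is identical), and then deal with the diffuse-center part separately. Where you genuinely diverge is in how the pieces are reassembled. The paper only records the containment $M'\cap\cM\supseteq Z(M_0)\oplus\bigoplus_j M_j'\cap\cM_j$ and then invokes the existence of a faithful normal conditional expectation from the finite algebra $M'\cap\cM$ onto this diffuse subalgebra, citing Blackadar to conclude diffuseness of the whole. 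You instead (a) upgrade the containment on the atomic corners to the exact identification $(M'\cap\cM)z_i=(Mz_i)'\cap(Mz_i)^\omega$, which is correct and makes those summands diffuse outright, and (b) handle the diffuse-center corner by proving from scratch that a tracial von Neumann algebra containing a unital diffuse abelian subalgebra has no minimal projections, via the central support $z(p)$ of a hypothetical atom. Your sub-claim argument is sound: $z(p)$ is a minimal central projection, $Nz(p)$ is a finite type I factor, and the nonzero normal image $Az(p)$ of a diffuse abelian algebra cannot be finite-dimensional. The trade-off is that your route is more self-contained and elementary (no conditional expectations, no external citation for the last step), at the cost of a slightly longer argument; the paper's route is shorter but leans on standard machinery.
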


\begin{proof}
Fix $\omega\in\beta\bN\setminus \bN$. Splitting $Z(M)$ into diffuse and atomic pieces, we see that our hypothesis is equivalent to saying that there is a set $J$ (potentially empty) such that:
\[M=M_{0}\oplus \bigoplus_{j\in J}M_{j},\]
where:
\begin{itemize}
    \item $M_{0}$ is either $\{0\}$ or has diffuse center,
    \item $M_{j}$ is a nonfull $\textrm{II}_{1}$-factor.    
\end{itemize}
For $j\in \{0\}\sqcup J$, let $\cM_{j}$ be the von Neumann algebra ultrapower of $M_{j}$ with respect to $\omega$. Then:
\[M'\cap \cM\supseteq Z(M_{0})\oplus \bigoplus_{j\in J}M_{j}'\cap \cM_{j}:=N.\]
By  \cite[Proposition 1.10]{DixmierGamma} we know that $M_{j}'\cap \cM_{j}$ is diffuse for every $j\in J$. Since $Z(M_{0})$ is diffuse, it follows that $N$ is diffuse. 
Since $M'\cap \cM$ is a finite von Neumann algebra, it has a faithful, normal conditional expectation onto $N$.
 This implies that $M'\cap \cM$ is diffuse, (e.g. this follows from restricting such a conditional expectation to the maximal purely atomic direct summand of $M'\cap \cM$ and applying \cite[Theorem IV.2.2.3]{BlackadarOA}).
\end{proof}

\begin{rem}
If $M$ is diffuse, then we can write it as a direct sum of von Neumann algebras which either have diffuse centers or are $\textrm{II}_{1}$-factors. However, it is surprisingly subtle and unclear if we can pass the rapid decay assumptions to these direct summands. It is thus an interesting question for further research to adapt our arguments to the case where $M$ is diffuse.
\end{rem}

We now prove Theorem \ref{thm:free prod RDP selfless}.

\begin{proof}[Proof of Theorem \ref{thm:free prod RDP selfless}]

We first note that we may find a filtration $(V_{n})_{n=0}^{\infty}$ of $A_{2}$ such that $(V_{n})_{n=0}^{\infty}$ has the rapid decay property and $\dim(V_{n})<+\infty$ for every $n\in \bN$.

To prove this, fix a filtration $(E_{n})_{n=1}^{\infty}$ of $A_{2}$ with the rapid decay property. Since $A_{2}$ is separable, we may find an increasing sequence $(E_{n}^{(r)})_{r=1}^{\infty}$ of subspaces of $E_{n}$ such that $E_{n}^{(r)}$ is finite dimensional for every $r$, $1\in E_{n}^{(r)}$ for every $r$, and such that $\bigcup_{r}E_{n}^{(r)}$ is dense in $E_{n}$.
Further, set $E_{0}^{(r)}=\bC 1$ for all $r\in \bN$. 

Let
\[V_{n}=\sum_{\substack{j\in \{0,1,\cdots,n\}^{n}:j_{1}+\cdots+j_{n}\leq n\\
r\in \{1,\cdots,n\}^{n}}}\Span\{v_{1}\cdots v_{n}:v_{i}\in E_{j_{i}}^{(r_{i})}\}.\]
It is direct to check that this is a filtration. Each $V_{n}$ is finite dimensional, and since $\bigcup_{n}V_{n}\supseteq E_{k}^{(r)}$ for all $r,k$, we see that $\bigcup_{n}V_{n}$ is dense in $A_{2}$. Finally, since each $E_{n}$ is a filtration, we have that $V_{n}\subseteq E_{n}$ and thus the rapid decay property of $(V_{n})_{n=1}^{\infty}$ is inherited from $(E_{n})_{n=1}^{\infty}$.

We may thus fix a filtration
$(V_{n})_{n=0}^{\infty}$ of $A_{2}$ such that $(V_{n})_{n=0}^{\infty}$ has the rapid decay property and $\dim(V_{n})<+\infty$ for every $n\in \bN$. 

We now verify the asymptotically orthogonality, uniform nontrivial angle conditions, and inflated rapid decay hypotheses of Theorem \ref{thm: selfless from asy angle conditions}.

\emph{Case 1: The GNS completion of $A_{2}$ is a $\textrm{II}_{1}$-factor}.
Let $u_{k}$ be as in conclusion to Proposition \ref{prop: how to handle factorial case}. We verify the hypotheses of Theorem \ref{thm: selfless from asy angle conditions} with $u_{n,k}=u_{k}$, $\widehat{F}_{n,k}=u_{k}^{*}(V_{n}\ominus \bC 1)u_{k}+V_{n}\ominus \bC 1$.  The asymptotic containment condition is immediate in this case.
Since $\|u_{k}\|=1$, the functions
\begin{itemize}
    \item $(a,b)\in V_{n}\mapsto \tau(u_{k}ab)$,
    \item $(a,b)\in V_{n}^{2}\mapsto \tau(u_{k}au_{k}b)$,
    \item $(x,y)\in (V_{n}\ominus \bC 1)^{2}\mapsto \tau(u_{k}^{*}xu_{k}y)$,
\end{itemize}
are equicontinuous on the sets  $\{(a,b)\in V_{n}:\|a\|_{2},\|b\|_{2}\leq 1\}$, $\{(x,y)\in (V_{n}\ominus \bC 1)^{2}:\|x\|_{2},\|y\|_{2}\leq 1\}$  under the metric $d((c_{1},d_{1}),(c_{2},d_{2}))=\left(\|c_{1}-c_{2}\|_{2}+\|d_{1}-d_{2}\|_{2}\right)^{1/2}$. By finite-dimensionality, these sets are compact and so the above pointwise convergence can be upgraded to uniform convergence. Thus setting:
\begin{itemize}
    \item $\delta_{k,1}=\sup_{a,b\in V_{n}:\|a\|_{2},\|b\|_{2}\leq 1}|\tau(u_{k}ab)|$,
    \item $\delta_{k,2}=\sup_{(a,b)\in V_{n}^{2}:\|a\|_{2},\|b\|_{2}|
\leq 1}|\tau(u_{k}au_{k}b)|$,
    \item $\delta_{k,3}=\sup_{(x,y)\in (V_{n}\ominus \bC 1)^{2}:\|x\|_{2},\|y\|_{2}\leq 1} |\tau(u_{k}^{*}xu_{k}y)|$,
    \item $\delta_{k}=\max(\delta_{k,1},\delta_{k,2},\delta_{k,3})$,
\end{itemize}
we have $\delta_{k}\to 0$.  For the inflated rapid decay, let $x,y\in V_{n}\ominus \bC 1$. Then for all large $k$, we have 
\begin{align*}
\|u_{k}^{*}xu_{k}+y\|_{2}^{2}&\geq \|x\|_{2}^{2}+\|y\|_{2}^{2}-2\delta_{k}\|x\|_{2}\|y\|_{2}\\
&=(1-\delta_{k})(\|x\|_{2}+\|y\|_{2})^{2}+\delta_{k}(\|x\|_{2}-\|y\|_{2})^{2}\\
&\geq \frac{1}{2}\max(\|x\|_{2}^{2},\|y\|_{2}^{2}).    
\end{align*}
By assumption, we can choose $c,\alpha>0$ such that $\|a\|\leq c(n+1)^{\alpha}\|a\|_{2}$ for all $a\in V_{n}$.
Hence, for all large $k$:
\[\|u_{k}^{*}xu_{k}+y\|\leq \|x\|+\|y\|\leq C(n+1)^{\alpha}(\|x\|_{2}+\|y\|_{2})\leq 4C(n+1)^{\alpha}\|u_{k}^{*}xu_{k}+y\|_{2},\]
verifying the inflated rapid decay assumption.  For the asymptotic orthogonality, note that if $x,y\in V_{n}\ominus \bC 1$, and $a\in V_{n}$, then:
\begin{align*}
 |\tau(au_{k}(x+u_{k}^{*}yu_{k}))|=|\tau(u_{k}xa)+\tau(u_{k}ay)|&\leq \delta_{k}\|a\|_{2}(\|x\|_{2}+\|y\|_{2})\\
 &\leq 2\delta_{k}\|a\|_{2}\|u_{k}^{*}xu_{k}+y\|_{2}.   
\end{align*}
Since $\delta_{k}\to 0$, this verifies the asymptotic orthogonality condition.
We now apply Theorem \ref{thm: selfless from asy angle conditions} to complete the proof.

\emph{Case 2: The GNS completion of $(A_{2},\tau)$ has diffuse central sequence algebra.}
In this case, let $u_{k}$ be the unitaries from Proposition \ref{prop: dealing with the diffuse center case}, and set $u_{n,k}=u_{k}$. Fix $n\in \bN$. As in Case 1, we deduce the existence of positive numbers $\delta_{k}\to_{k\to\infty}0$ such that for all $a,b\in V_{n}$: 
\begin{itemize}
    \item $|\tau_{k}(u_{k}ab)|\leq \delta_{k}\|a\|_{2}\|b\|_{2}$
    \item $|\tau_{k}(u_{k}au_{k}b)|\leq \delta_{k}\|a\|_{2}\|b\|_{2}$,
    \item $\|u_{k}^{*}au_{k}-a\|\leq \delta_{k}\|a\|$.
    \end{itemize}
We set $\widehat{F}_{n,k}=V_{n}\ominus \bC 1$. The asymptotic containment condition is immediate from the last bullet point above. The inflated rapid decay of $\widehat{F}_{n,k}$ then follows from the assumed rapid decay of $V_{n}$, and the above bullet points directly imply the asymptotic orthogonality conditions. 
\end{proof}

We can now prove Theorem \ref{main semi thm} from the introduction.

\begin{thm}\label{theoremA}
   Voiculescu's free semicircular $C^*$-algebras $\mathcal S_n$ are selfless for $n\geq 2$. In particular, they have strict comparison. 
\end{thm}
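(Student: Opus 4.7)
The plan is to deduce Theorem \ref{theoremA} from Theorem \ref{thm:free prod RDP selfless} by recognizing $\mathcal{S}_n$ as a reduced free product of two appropriate pieces. By Voiculescu's construction of the free semicircular system, each $s_i$ generates a $C^{*}$-algebra isomorphic to $C([-2,2])$ on which the canonical trace becomes the semicircular integration $\tau_{sc}(f) = \frac{1}{2\pi}\int_{-2}^{2} f(t)\sqrt{4-t^{2}}\,dt$, and freeness of $s_1,\ldots,s_n$ gives a canonical identification $(\mathcal{S}_n,\tau) \cong *_{i=1}^{n}(C([-2,2]),\tau_{sc})$. I would then write $\mathcal{S}_n \cong A_1 * A_2$ with $A_1 = (C([-2,2]),\tau_{sc})$ and $A_2 = (\mathcal{S}_{n-1},\tau)$ (interpreting $\mathcal{S}_1 = C([-2,2])$ when $n=2$). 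Both factors are unital separable $C^{*}$-probability spaces with faithful traces, and $\dim_{\bC}(A_j) > 1$ for $j=1,2$.

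The next step is to verify the rapid decay hypothesis. Example \ref{ex:sc} shows that $A_1$ has tracial polynomial growth, and in particular rapid decay, relative to the polynomial-degree filtration $V_n = \{P \in \bC[t]: \deg P \leq n\}$; the key input is the estimate $\|U_n\|_\infty \leq n+1$ for the Chebyshev polynomials of the second kind (rescaled to $[-2,2]$). For $A_2$: when $n=2$ this is the same filtration; for $n \geq 3$, I would endow $A_2 = \mathcal{S}_{n-1}$ with the free-product filtration of Lemma \ref{lem: defining the filtration for RDP} built from the polynomial-degree filtrations on each of the $n-1$ factors, and Theorem \ref{thm: RDP for free products}, applied inductively, guarantees that it has the rapid decay property.

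Finally, the GNS completion dichotomy is where the case split between $n=2$ and $n \geq 3$ enters. Let $M_2$ denote the GNS completion of $(A_2,\tau)$. For $n = 2$, $M_2 \cong L^{\infty}([-2,2],\mu_{sc})$ (with $\mu_{sc}$ the semicircular probability measure) is diffuse abelian, hence has diffuse center and therefore diffuse central sequence algebra, so the second bullet of Theorem \ref{thm:free prod RDP selfless} applies. For $n \geq 3$, by Voiculescu's theorem identifying the von Neumann algebra of a free semicircular system with the free group factor, $M_2 \cong L(\bF_{n-1})$ is a $\textrm{II}_{1}$-factor, so the first bullet applies. In either case Theorem \ref{thm:free prod RDP selfless} yields selflessness of $\mathcal{S}_n$, and strict comparison is then automatic from selflessness with respect to a faithful trace, as recalled in the preliminaries. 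At this final stage there is no genuine obstacle beyond matching the factorization to the two bullets: all the real technical work --- orthogonal polynomial estimates for the semicircular measure, the Ricard--Xu free-product permanence of rapid decay, and the asymptotic-orthogonality machinery inside Theorem \ref{thm:free prod RDP selfless} --- has already been carried out upstream.
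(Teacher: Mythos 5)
Your proof is correct, and for $n\geq 3$ it takes a genuinely different route from the paper's. The paper reduces everything to the single case $\mathcal S_2\cong (C[-2,2],\tau_{sc})*(C[-2,2],\tau_{sc})$, handled exactly as in your $n=2$ case via the diffuse-center bullet of Theorem \ref{thm:free prod RDP selfless}, and then obtains $\mathcal S_n$ for $n\geq 3$ by citing the external permanence result that a reduced free product of a selfless $C^*$-probability space with any $C^*$-probability space (with faithful GNS representation) is again selfless \cite[Theorem 4.2]{robert2023selfless}. You instead decompose $\mathcal S_n\cong (C[-2,2],\tau_{sc})*(\mathcal S_{n-1},\tau)$ and, for $n\geq 3$, invoke the II$_1$-factor bullet together with Voiculescu's identification of the GNS completion of $\mathcal S_{n-1}$ with $L(\bF_{n-1})$; the rapid decay of $\mathcal S_{n-1}$ you correctly extract from Theorem \ref{thm: RDP for free products}. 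Both arguments are complete. The paper's version is leaner in that it only ever uses the abelian (diffuse central sequence) case of Theorem \ref{thm:free prod RDP selfless} and needs rapid decay only for $C[-2,2]$ itself; yours avoids the external permanence theorem at the cost of importing Voiculescu's free group factor theorem and routing through the II$_1$-factor case, whose proof rests on Popa's free-independence result in the tracial ultrapower. A minor cosmetic point: the paper identifies the GNS completion of $(C[-2,2],\tau_{sc})$ with $L^\infty([-2,2],m)$ rather than $L^\infty([-2,2],\mu_{sc})$, but these are isomorphic since $\mu_{sc}$ is atomless with full support, so nothing changes.
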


\begin{proof}
We note  that
\[
\mathcal S_n\cong *_{i=1}^n (C[-2,2],\tau),
\]
where $\tau$ is given by integration against the semicircular distribution, i.e., $\tau(f)=\frac{1}{2\pi}\int_{-2}^{2}f(t)\sqrt{4-t^{2}}\,dt$. It will suffice to show that $\mathcal S_2$ is selfless,
since selflessness is preserved under taking reduced free product with an arbitrary $C^*$-probability $(A,\rho)$ (provided $\rho$ induces a faithful GNS representation) \cite[Theorem 4.2]{robert2023selfless}. Consider thus the reduced free product
\[
(C[-2,2],\tau)*(C[-2,2],\tau).
\]
By Example \ref{ex:sc}, both factors have rapid decay relative to the polynomial degree filtration. Since $\sqrt{4-t^{2}}\,dt$ is atomless, the GNS completion of $C[-2,2]$ relative to $\tau$ is isomorphic to $L^\infty([-2,2],m)$.
The second bullet point condition in Theorem \ref{thm:free prod RDP selfless} is thus met.  We conclude by this theorem that $\mathcal S_2$ is selfless, as desired.
\end{proof}

Let $H_\bR$ be a real Hilbert space and $(U_t)_t$ a one-parameter group
of orthogonal transformations on $H_\bR$. Let $\Gamma(H_\bR, U_t)$ denote
the free Araki-Wood $C^*$-algebra introduced by Shlyakhtenko in \cite{Shlyakhtenko}.
Let $\phi_U$ denote the vacuum state on $\Gamma(H_\bR, U_t)$. 
We refer the reader to \cite{Shlyakhtenko} for  details on this construction. We note that, in the case that $(U_t)_t$
acts trivially on $H_\bR$, $\Gamma(H_\bR, U_t)$ agrees with the $C^*$-algebra generated by a free semicircular
system. In the case that $(U_t)_t$ is nontrivial, Shlyakhtenko shows that $\Gamma(H_\bR, U_t)$ is simple and traceless. 

\begin{cor}
If $\dim H_\bR\geq 3$ and $(U_t)_t$ nontrivial, then $(\Gamma(H_\bR, U_t),\phi_U)$ is selfless, simple, and purely infinite. 
\end{cor}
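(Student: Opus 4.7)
The plan is to leverage the canonical reduced free product decomposition of free Araki--Woods $C^*$-algebras associated to $U_t$-invariant orthogonal splittings of $H_\bR$, together with Theorem~\ref{theoremA} and the stability of selflessness under reduced free products \cite[Theorem 4.2]{robert2023selfless}. Concretely, whenever $H_\bR = H_\bR^{(1)} \oplus H_\bR^{(2)}$ is an orthogonal $U_t$-invariant decomposition, Shlyakhtenko's construction yields an identification
\[
(\Gamma(H_\bR, U_t), \phi_U) \cong (\Gamma(H_\bR^{(1)}, U_t|_{H_\bR^{(1)}}), \phi_{U^{(1)}}) * (\Gamma(H_\bR^{(2)}, U_t|_{H_\bR^{(2)}}), \phi_{U^{(2)}})
\]
as reduced free product $C^*$-probability spaces, with each vacuum state faithful.

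Let $H_\bR^0$ denote the fixed subspace of $(U_t)_t$. The cleanest case is $\dim H_\bR^0 \geq 2$: choose any two-dimensional subspace $V \subseteq H_\bR^0$, so that $\Gamma(V) \cong \mathcal{S}_2$ splits off as a free factor. Theorem~\ref{theoremA} gives that $\mathcal{S}_2$ is selfless, and \cite[Theorem 4.2]{robert2023selfless} then yields the selflessness of $\Gamma(H_\bR, U_t)$.

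When $\dim H_\bR^0 \in \{0,1\}$, we instead invoke Theorem~\ref{thm:free prod RDP selfless}. If $\dim H_\bR^0 = 1$, split off $\Gamma(H_\bR^0) \cong C[-2,2]$; this is a separable tracial $C^*$-probability space with commutative atomless GNS completion (in particular, diffuse center, matching the second bullet of Theorem~\ref{thm:free prod RDP selfless}) and rapid decay by Example~\ref{ex:sc}, while the complementary factor $\Gamma((H_\bR^0)^\perp, U_t|_{(H_\bR^0)^\perp})$ carries a filtration with the rapid decay property via \cite{Miyagawa}. The case $\dim H_\bR^0 = 0$ (only possible when $\dim H_\bR \geq 4$ is even) is handled similarly, after first decomposing via the spectral theorem for $(U_t)_t$ to isolate a suitable tracial piece.

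Once selflessness is in hand, simplicity is automatic \cite{robert2023selfless}. Pure infiniteness then follows from the selfless dichotomy \emph{purely infinite or stable rank one} (see \cite{robert2023selfless}) combined with the fact that $\phi_U$ is non-tracial (as $(U_t)_t$ is nontrivial) and $\Gamma(H_\bR, U_t)$ is traceless \cite{Shlyakhtenko}, which rules out stable rank one. The main technical subtlety is the case $\dim H_\bR^0 = 0$: no trivial subspace is available, and one must isolate a tracial free factor indirectly while ensuring rapid decay for each factor in the decomposition --- this is where the input of \cite{Miyagawa} is essential.
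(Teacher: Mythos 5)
Your reduction in the case $\dim H_\bR^0\geq 2$ is fine and is essentially what the paper does in the almost periodic situation, but the cases $\dim H_\bR^0\in\{0,1\}$ --- which you correctly identify as the heart of the matter --- are not actually proved. To invoke Theorem \ref{thm:free prod RDP selfless} with $A_1=\Gamma((H_\bR^0)^\perp, U_t)$ you need a filtration of $A_1$ with the rapid decay property, and nothing in the paper or in \cite{Miyagawa} supplies one: that reference concerns strong convergence for $q$-Gaussians and says nothing about rapid decay filtrations on free Araki--Woods algebras with nontrivial $(U_t)_t$. The case $\dim H_\bR^0=0$ is then dismissed as ``handled similarly'' with no argument at all (and the parenthetical that it forces $\dim H_\bR\geq 4$ even is false when $H_\bR$ is infinite-dimensional, e.g.\ for weakly mixing $(U_t)_t$). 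So the proposal has a genuine gap precisely where the difficulty lies.

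The paper's proof is organized to avoid ever needing rapid decay for the non-tracial factors. It splits $H_\bR=H_c\oplus H_{wm}$ into the almost periodic and weakly mixing parts. If $H_{wm}\neq 0$, spectral theory for the complexified generator produces infinitely many nonzero invariant subspaces, so $\Gamma(H_{wm},U_t)$ is an infinite reduced free product in which every factor contains a state-preserving copy of $(C([-2,2]),\mu_{sc})$ and hence a state-zero unitary; selflessness then follows from \cite[Theorem 2.8]{robert2023selfless}, a criterion entirely different from the rapid decay machinery. If instead $\dim H_c\geq 3$, one reduces to finite dimensions by direct limits, decomposes into $1$- and $2$-dimensional invariant pieces, and uses Shlyakhtenko's identification $(\Gamma(\bR^2,U_t),\phi_U)\cong(\mathcal T,\phi_\lambda)*(C[-2,2],\tau)$ to exhibit $(C[-2,2],\tau)*(C[-2,2],\tau)\cong\mathcal S_2$ as a free factor, after which Theorem \ref{theoremA} and \cite[Theorem 4.2]{robert2023selfless} conclude. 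If you want to salvage your outline, you would need to either prove a rapid decay filtration exists on $\Gamma(H,U_t)$ for nontrivial $(U_t)_t$ (not currently available) or switch to one of these two mechanisms in the low-fixed-space cases.
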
		

\begin{proof}
Since $\phi_U$ is faithful and non-tracial, simplicity and pure infiniteness are implied by selflessness \cite[Theorem 3.1]{robert2023selfless}, so we 
must show that $\Gamma(H_\bR, U_t)$ is selfless. 

We may split $H_{\bR}$ as a direct sum of 
$(U_t)_t$-invariant subspaces: $H_\bR = H_{c}\oplus H_{wm}$, where $H_{c}$ is a (potentially infinite) direct sum of finite-dimensional subrepresentations, and $H_{wm}$ has no nonzero finite-dimensional subrepresentations. Since $\Gamma(H_{\bR},U_{t})=\Gamma(H_{c},U_{t})*\Gamma(H_{wm},U_{t})$ (by \cite[Theorem 2.11]{Shlyakhtenko}), and the reduced free product of a selfless $C^{*}$-algebra with any $C^{*}$-algebra is selfless, it suffices to show that one of $\Gamma(H_{c},U_{t})$, $\Gamma(H_{wm},U_{t})$ is selfless. Since $\dim_{\bR}(H_{\bR})\geq 3$, we either have that $\dim_{\bR}(H_{wm})>0$ or $\dim_{\bR}(H_{c})\geq 3$.

Suppose that $\dim_{\bR}(H_{wm})>0$. Let $V_{t}$ be the complexification of $U_{t}|_{H_{wm}}$ acting on $\cK=H_{wm}\otimes_{\bR}\bC$. By Stone's theorem, we have that $V_{t}=\int_{\bR}e^{it \xi}dE(\xi)$ for some spectral measure $E$ defined on the Borel subsets of $\bR$. Let $J$ be the conjugation linear map $\id_{H_{wm}}\otimes_{\bR}C$ where $C$ is conjugation on $\bC$. Note that 
\[J\left(\int e^{it\xi}dE(t)\right)J=JV_{t}J=V_{t}=\int e^{it\xi}dE(t).\]
Fourier inversion thus implies that $J\left(\int f(t)dE(t)\right)J=\int \overline{f(-t)}\,dE(t)$ for all Schwartz function $f$, and thus for all bounded, Borel $f$ via a standard approximation. Note further that if $\widetilde{K}\subseteq \cK$ is a closed, linear subspace with $J\widetilde{K}=\widetilde{K}$, then $\widetilde{K}=\widetilde{H}\otimes_{\bR}\bC$ for a unique $\widetilde{H}\subseteq H$. If $\Omega$ is a symmetric subset of $\bR$, then by the above we have $JE(\Omega)J=E(\Omega)$, and so $\widetilde{K}=E(\Omega)(\cK)$  is the complexification of a $U_{t}$-invariant subspace.

Since $H_{wm}$ has no finite-dimensional subrepresentations, the above paragraph implies that $E$ has no atoms. Let $p_{t}=E((-\infty,t])$. Since $E$ has no atoms, $t\mapsto p_{t}$ is an SOT-continuous path of projections with $\lim_{t\to-\infty}p_{t}=0$ and $\lim_{t\to\infty}p_{t}=1$. Thus we may find an increasing sequence $0<t_{1}<t_{2}<\cdots$ of real numbers so that $t_{n}\to \infty$ and $p_{t_{i+1}}-p_{t_{i}}\ne 0$. Set $t_{0}=0$, and $K_{i}=(E((-t_{i},-t_{i-1}]\cup [t_{i-1},t_{i})))(K)$ for $i=1,2,\ldots$. By the above paragraph, there are  $U_{t}$-invariant subspaces $H_{i}$ of $H_{\bR}$ with $K_{i}$ the complexification of $H_{i}$ and each $H_{i}\ne 0$. Since $H_{wm}=\bigoplus_{i=1}^\infty H_{i}$, we have $\Gamma(H_{wm},U_{t})=*_{i=1}^\infty\Gamma(H_{i},U_{t})$. Since $H_{i}\ne 0$, each $\Gamma(H_{i},U_{t})$ has a state-preserving copy of $(C([-2,2]),\mu_{sc})$, where $\mu_{sc}$ is the semicircular measure. By the argument in Example \ref{ex: any diffuse measure on interval} we know that $(C([-2,2]),\mu_{sc})$ is isomorphic to $(C([0,1]),m)$ as $C^{*}$-probability spaces, where $m$ is Lebesgue measure. Thus $(C([-2,2]),\mu_{sc})$ has a state zero unitary. We now apply \cite[Theorem 2.8]{robert2023selfless} to see that $\Gamma(H_{wm},U_{t})$ is selfless.

Now suppose $\dim_{\bR}(H_{c})\geq 3$. If $\dim H_{c}=\infty$, then writing $H_{c}$ as a direct sum of finite-dimensional subrepresentations, we can write $\Gamma(H_{c},U_t)$ as a direct limit of C*-subalgebras
of the form $\Gamma(H_\bR',U'_t)$, with $3\leq \dim H_\bR'<\infty$. 
By the preservation of selflessness under direct limits \cite[Theorem 4.1]{robert2023selfless}, it suffices to show that each $\Gamma(H_{\bR}',U_t')$
 is selfless. We may thus assume that $H_{\bR}$ is finite dimensional. 

Suppose that $3\leq \dim H_\bR<\infty$. In this case $H_\bR$ decomposes into a direct sum
of 1-dimensional or 2-dimensional $U_t$-invariant subspaces. By \cite[Theorem 2.11]{Shlyakhtenko}, $\Gamma(H_\bR,U_t)$ can thus be
expressed as a reduced free product where each factor is either
$\Gamma(\bR, \mathrm{id}_t)$  or  $\Gamma(\bR^2,U_t)$, with $(U_t)_t$ non-trivial (see \cite[Remark 2.12]{Shlyakhtenko}). 
We note that $(\Gamma(\bR, \mathrm{id}_t),\tau_U)\cong (C[-2,2],\tau)$, with $\tau$ coming from the semicircular distribution. On the other hand, if $(U_t)_t$ acts nontrivially on $\bR^2$, then,
by \cite[Corollary 4.9]{Shlyakhtenko},
\[
(\Gamma(\bR^2,U_t),\phi_U)\cong (\mathcal T, \phi_\lambda)*(C[-2,2], \tau).
\] 
(Here $\mathcal T$ denotes the Toeplitz $C^*$-algebra and 
 $\phi_\lambda$ a certain faithful state on $\mathcal T$.) We deduce from these calculations  that if $\dim H_\bR\geq 3$, then
   \[
   (\Gamma(H_\bR,U_t),\phi_U)\cong (A,\rho)*(C[-2,2],\tau)*(C[-2,2],\tau),
   \]
for some $(A,\rho)$. It follows that $(\Gamma(\bR^2,U_t),\phi_U)$ is selfless by Theorem \ref{theoremA} and the preservation of selflessness under
reduced free products \cite[Theorem 4.2]{robert2023selfless}.
\end{proof}

 \section{Selflessness for free products of finite dimensional abelian $C^*$-algebras}

In this section we prove the following theorem:

\begin{thm}\label{abelianfindim}
Let	$A$ and $B$ be finite-dimensional abelian $\mathrm{C}^*$-algebras endowed with faithful states $\tau_A$ and $\tau_B$. Then
\[
(C,\tau)=(A,\tau_A)\ast(B,\tau_B),
\]
is selfless if and only if  $\dim(A)+\dim(B)\ge 5$ and  
whenever $p$ is a minimal projection in $A$, and $q$ is a minimal projection in $B$, we have
\begin{equation}\label{iffcondition}
	\tau_A(p) + \tau_B(q) < 1.
\end{equation}
\end{thm}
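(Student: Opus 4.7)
The plan is to verify both directions of the biconditional. For the necessity direction, I would argue by failing simplicity. If $\dim(A)+\dim(B)\leq 4$ the only nontrivial situation is $\mathbb{C}^2 \ast \mathbb{C}^2$, which (by direct computation, or from Dykema's structure results) is isomorphic to an algebra of the form $C(X)\rtimes \mathbb{Z}/2$ and hence has large center, so cannot be selfless. The other small dimension cases reduce to one of the algebras being $\mathbb{C}$, collapsing the free product to a finite-dimensional $C^{*}$-algebra. If instead $\tau_{A}(p)+\tau_{B}(q)\geq 1$ for some minimal projections $p\in A$ and $q\in B$, then Dykema's structure result for reduced free products of finite-dimensional $C^{*}$-algebras from \cite{Dykemasimplicitystablerankonefree} produces a nontrivial finite-dimensional direct summand of $A\ast B$, again destroying simplicity and hence selflessness.

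For the sufficiency direction I would assume $\dim(A)+\dim(B)\geq 5$ and $\tau_{A}(p)+\tau_{B}(q)<1$ strictly, and aim to apply Theorem \ref{mainthm3} after re-expressing $(C,\tau)$ as a reduced free product in a more useful way. First I would invoke Dykema's classification of von Neumann free products of finite-dimensional abelian algebras from \cite{dykemafreehyper}: under our hypotheses the GNS completion of $(C,\tau)$ is an interpolated free group factor $L(\mathbb{F}_{t})$, hence a $\textrm{II}_{1}$-factor. Next, using Dykema's structural isomorphisms on the $C^{*}$-algebra side from \cite{Dykemasimplicitystablerankonefree}, I would extract a finite-dimensional abelian $C^{*}$-subalgebra $X\subseteq C$ and a complementary $C^{*}$-subalgebra $Y\subseteq C$, freely independent with respect to $\tau$, such that the inclusion $(X,\tau|_X)\ast (Y,\tau|_Y)\hookrightarrow C$ is an isomorphism of $C^{*}$-probability spaces, with $Y$ itself carrying the structure of a reduced free product of finite-dimensional abelian algebras, still satisfying the strict inequality and with total dimension at least $5$. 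Both $X$ and $Y$ then have filtrations with the rapid decay property---$X$ trivially, and $Y$ by Theorem \ref{thm: RDP for free products} applied to its presentation as a free product of finite-dimensional pieces. The GNS completion of $Y$ is either a $\textrm{II}_{1}$-factor (by Dykema's vN classification applied to $Y$'s own presentation, using that enough dimension remains after removing $X$) or, in boundary subcases, has diffuse central sequence algebra. Theorem \ref{mainthm3} then yields that $(X,\tau|_X)\ast (Y,\tau|_Y)\cong (C,\tau)$ is selfless.

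The main obstacle is the re-decomposition step: carving off a finite-dimensional abelian ``corner'' $X$ of $A\ast B$ so that the complementary piece $Y$ is itself presentable as a free product of finite-dimensional abelian algebras with the required properties (strict inequality, dimension bound, and factorial or diffuse-center vN completion), while preserving the ambient free product structure of $(C,\tau)$. This relies on delicate bookkeeping with ``free dimension'' invariants and on Dykema's isomorphism theorems for these algebras. A secondary technical point is that when one of $A,B$ is $\mathbb{C}^{2}$, the direct Avitzour-style approach of Theorem \ref{mainthm2} is unavailable (as $\mathbb{C}^{2}$ lacks two orthogonal state-zero unitaries), so the re-decomposition genuinely needs both bullet points of Theorem \ref{mainthm3}---the $\textrm{II}_{1}$-factor case for generic weight configurations and the diffuse-central-sequence case for the more degenerate ones.
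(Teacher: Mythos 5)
Your overall strategy is the paper's: necessity via non-simplicity (Dykema's simplicity criterion from \cite{dykema2}), sufficiency by re-expressing the algebra as a reduced free product in which one factor is infinite-dimensional with factorial or diffuse GNS completion, and then invoking Theorem \ref{mainthm3} (with rapid decay supplied by Theorem \ref{thm: RDP for free products}). The two-case split you anticipate (generic case: interpolated free group factor; boundary case $\dim A+\dim B=5$: diffuse abelian, via $\bC^2\ast\bC^2\cong M_2(C[a,b])\oplus\bC^k$) is exactly the split in the paper. However, the step you flag as ``the main obstacle'' is the entire content of the sufficiency proof, and as stated it does not go through: there is no pair of freely independent subalgebras $X,Y\subseteq C$ with $X$ finite-dimensional abelian, $Y$ infinite-dimensional, and $(X,\tau|_X)\ast(Y,\tau|_Y)\cong (C,\tau)$ via the inclusion --- in the given presentation both free factors of $C$ are finite-dimensional, and Dykema's isomorphism theorems do not re-split $C$ itself. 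What the paper actually does is pass to a \emph{corner}: take $p=p_1+p_2$ a sum of two minimal projections of $A$ (after arranging $\dim A\geq 3$), use simplicity plus the permanence of selflessness under stable isomorphism in the tracial case (\cite[Theorems 4.3, 4.4]{robert2023selfless}) to reduce to $pCp$, and then apply Dykema's compression formula \cite[Proposition 2.8]{Dykemasimplicitystablerankonefree} to identify $pCp\cong \bC^2\ast pB_1p$, where $B_1$ is the free product obtained by merging $p_1,p_2$ into a single projection. A further wrinkle you omit is that $pB_1p$ is of the form $D\oplus\bC^{k'}$ rather than having diffuse GNS completion outright, so one needs the additional Lemma \ref{sumsfreeprod} (a second compression) to strip off the finite-dimensional summand before Theorem \ref{mainthm3} applies. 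Without the compression mechanism and the stable-isomorphism transfer, the sufficiency argument is not complete.

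Two smaller points. In the necessity direction, when $\tau_A(p)+\tau_B(q)=1$ exactly, Dykema's structure theorem does \emph{not} produce a finite-dimensional direct summand (the atoms $p\wedge q$ appear only for strict inequality $>1$); non-simplicity in the equality case comes from a finite-dimensional quotient instead, so you should simply cite \cite[Theorem 1]{dykema2} for the full simplicity characterization rather than argue via summands. Also, the fact that the GNS completion of $(C,\tau)$ itself is $L(\bF_t)$ is true under your hypotheses but is not the input to Theorem \ref{mainthm3}; what matters is the GNS completion of the big factor in the re-decomposed (compressed) free product.
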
	

We derive this theorem from Theorem~\ref{thm:free prod RDP selfless}  combined with Dykema's results on the structure of  reduced free products of finite dimensional abelian $C^*$-algebras, as well as on their von Neumann algebra counterpart.

\begin{lem}\label{sumsfreeprod}
	For $i=1,2,3$, let $(A_i,\tau_i)$ be  tracial $C^*$-probability spaces with rapid decay relative to some filtrations.
	Suppose that $(A,\tau)=(A_1,\tau_1)*(A_2\oplus A_3,\tau_2\oplus\tau_3)$ is simple, $A_1\neq \bC$,  and that either:
\begin{itemize}
    \item the GNS completion of $A_{2}$ with respect to $\tau$ is a $\textrm{II}_{1}$-factor, or
    \item the GNS completion of $A_{2}$ with respect to $\tau$ has diffuse central sequence algebra (e.g. if the GNS completion of $A_{2}$ has diffuse center). 
\end{itemize}
	Then $(A,\tau)$ is selfless. 
\end{lem}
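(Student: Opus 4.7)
The plan is to reduce to Theorem \ref{thm:free prod RDP selfless} applied to the smaller reduced free product $(A_1,\tau_1)*(A_2,\tau_2)$. First I would verify that this smaller free product satisfies all hypotheses of Theorem \ref{thm:free prod RDP selfless}: rapid decay of $A_1$ and $A_2$ is given, $A_1\neq\bC$ is given, separability of $A_2$ follows from its rapid-decay filtration (after passing to a finite-dimensional refinement as in the proof of Theorem \ref{thm:free prod RDP selfless}), and the bullet-point condition on the GNS completion of $A_2$ is exactly what is assumed. Therefore $(A_1,\tau_1)*(A_2,\tau_2)$ is selfless.

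The main step is transferring this selflessness to $A=(A_1,\tau_1)*(A_2\oplus A_3,\tau_2\oplus\tau_3)$. My approach would be via a compression argument in the spirit of Dykema's analysis of corners of reduced free products with direct-sum factors \cite{dykema1998projections, Dykemasimplicitystablerankonefree}. Let $p=1_{A_2}\in A_2\oplus A_3\subseteq A$. This is a projection of trace $\alpha=(\tau_2\oplus\tau_3)(p)>0$, and it is free from $\iota_1(A_1)$ in $A$. Dykema's technology identifies $pAp$ with (an amplification of) a reduced free product built from $(A_2,\tau_2)=p(A_2\oplus A_3)p$ together with a $C^*$-algebra assembled from compressions of $A_1$ by the free projection $p$, with rescaled traces. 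I would verify that this compressed free product still has rapid decay inherited from the hypotheses on $A_1$ and $A_2$, and that the bullet-point condition on $A_2$ is unchanged. Applying Theorem \ref{thm:free prod RDP selfless} to this compressed free product then yields that $pAp$ is selfless.

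Finally, since $A$ is simple by assumption, $p$ is a full projection and $pAp$ is strongly Morita equivalent to $A$. Selflessness of $pAp$ should lift to selflessness of $A$: any embedding $\theta\colon pAp*C\hookrightarrow (pAp)^\omega=pA^\omega p$ witnessing the selflessness of $pAp$ can be extended to an analogous embedding of a suitable free product with $A$ inside $A^\omega$, using the free-product universal property combined with the imprimitivity bimodule implementing the Morita equivalence.

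The main obstacle is the careful identification of $pAp$ with a reduced free product amenable to Theorem \ref{thm:free prod RDP selfless}, including the preservation of rapid decay under Dykema's compression, and the transfer of selflessness along the Morita equivalence. A direct application of Theorem \ref{thm: selfless from asy angle conditions} to $A$ with unitaries chosen in $A_2\oplus A_3$ runs into difficulty when $A_3$ is finite-dimensional: unitaries of the form $u_{n,k}^{(2)}\oplus u_{n,k}^{(3)}$ cannot satisfy the almost-orthogonality requirements against the $A_3$-component of the filtration, so the more indirect compression route seems necessary.
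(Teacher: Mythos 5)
Your proposal follows essentially the same route as the paper: compress by the unit $p$ of $A_2$, invoke Dykema's identification of $pAp$ with a reduced free product $(pBp)*A_2$ where $B=A_1*(A_2\oplus\bC)$, check that the corner inherits rapid decay from the permanence properties (corners, direct sums, free products), apply Theorem \ref{thm:free prod RDP selfless} to this compressed free product, and transfer back through fullness of $p$ in the simple algebra $A$ (the paper simply cites the preservation of selflessness under stable isomorphism in the tracial case from \cite{robert2023selfless}, rather than arguing directly via the imprimitivity bimodule). Two minor points: your opening paragraph establishing selflessness of $(A_1,\tau_1)*(A_2,\tau_2)$ is not actually used, since the theorem must be applied to $(pBp)*A_2$ and not to $A_1*A_2$; and separability of $A_2$ does not follow from the existence of a rapid-decay filtration (the filtration spaces need not be finite-dimensional), so it is an implicit hypothesis here, harmless in the paper's applications.
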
		

\begin{proof}
Let $p_2$ be the unit of $A_2$. Since $A$ is simple, $p_2$ is a full projection, 
and so $A$ is stably 
isomorphic to  $p_2Ap_2$. In the tracial case,   selflessness is preserved under stable isomorphism, by \cite[Theorems 4.3, 4.4]{robert2023selfless}. 
It thus suffices to show that $(p_2Ap_2,\frac{1}{\tau(p_2)}\tau)$ is selfless. By \cite[Proposition 2.8]{Dykemasimplicitystablerankonefree}, 
 \[
p_2Ap_2= p_2(A_1*(A_2\oplus A_3))p_2\cong p_2Bp_2*A_2,
 \]
 where 
 \[
 B=A_1*(A_2\oplus \bC).
 \]
Note that $p_2Bp_2\neq \bC$, as it contains $p_2A_1p_2\neq \bC$. Note also that $p_2Bp_2$
has rapid decay (relative to a suitable filtration),
by the preservation of this property through direct sums, reduced free products, and corners (Proposition \ref{RDpreservation} and 
Theorem \ref{thm: RDP for free products}).
We conclude by Theorem~\ref{thm:free prod RDP selfless} that $p_2Ap_2 \cong p_2Bp_2*A_2$ is selfless.
\end{proof}	

In the proof below we adopt the following notation from \cite{dykema2}:
\[
\stacked{A_1}{p_1}{\alpha_1}\oplus \cdots \stacked{A_n}{p_n}{\alpha_n}
\]
indicates that a $C^*$-probability space $(A_1\oplus \cdots \oplus A_n,\tau)$ satisfies that $\tau(p_i)=\alpha_i$ for all $i$, where $p_i$ denotes the unit of  $A_i$.

\begin{proof}[Proof of Theorem \ref{abelianfindim}]
Write
\begin{align*}
A=\stacked{\bC}{p_1}{\alpha_1}\oplus\cdots \oplus \stacked{\bC}{p_m}{\alpha_m},\\
B= \stacked{\bC}{q_1}{\beta_1}\oplus \cdots \oplus \stacked{\bC}{q_n}{\beta_n}.
\end{align*}		
The conditions \eqref{iffcondition} then read as $\alpha_i+\beta_j<1$ for all $i,j$.
By \cite[Theorem 1]{dykema2}, these conditions, together with $m+n\geq 5$, are necessary and sufficient for the reduced free product $C^*$-algebra  $C$ to be a simple $C^*$-algebra.
They are thus also necessary for $(C,\tau)$ to be selfless.
	
Let us prove sufficiency. Suppose that $m+n\geq 5$ and that $\alpha_i+\beta_j<1$ for all $i,j$. 	
Assume without loss of generality that $m\geq 3$ and $n\geq 2$. 
Let $p=p_1+p_2$. Since $C$ is a simple $C^*$-algebra (by \cite[Theorem 1]{dykema2}),  $pCp$ is stably isomorphic to $A$.
By the permanence of the selfless property under stable isomorphism in the tracial case (\cite[Theorems 4.3, 4.4]{robert2023selfless}), it suffices to show that $(pCp,\frac1{\tau(p)}\tau)$ is selfless.
By \cite[Proposition 2.8]{Dykemasimplicitystablerankonefree}, we have that
\[
pCp\cong
(\stacked{\bC}{p_1}{\alpha_1'}\oplus \stacked{\bC}{p_2}{\alpha_2'})*pB_1p,
\]
where $\alpha_1'=\alpha_1/(\alpha_1+\alpha_2)$, $\alpha_2'=\alpha_2/(\alpha_1+\alpha_2)$, and  
\[
B_1=\Big(\stacked{\bC}{p}{\alpha_1+\alpha_2}\oplus 
\stacked{\bC}{p_3}{\alpha_3}\oplus\cdots \oplus \stacked{\bC}{p_m}{\alpha_m}\Big)*
\Big(\stacked{\bC}{q_1}{\beta_1}\oplus \cdots \oplus \stacked{\bC}{q_n}{\beta_n}\Big).
\]

Let us consider first the case when $n+m>5$.
In this case, by Dykema's structure theorem on reduced free products of finite abelian $C^*$-algebras \cite[Theorem 1]{Dykemasimplicitystablerankonefree}, 
\[
B_1\cong A_0\oplus \bC^k
\] 
for some $k\geq 0$ and  an infinite dimensional $C^*$-algebra $A_0$ containing a closed two-sided ideal $A_{00}$ such that $A_{00}$ is simple
and $A_0/A_{00}$ is abelian and finite dimensional (possibly 0). What is more relevant for our current argument is that, by Dykema’s analogous structure result for von Neumann algebra free products \cite[Theorem 2.3]{dykemafreehyper},
the GNS completion of $A_0$ in $B(L^2(A_0,\tau_{A_0}))$ is an interpolated free group factor $L(F_s)$. We then have that
 \[
pB_1p=(pA_0p)\oplus \bC^{k'}.
 \] 
Note that $(pA_0p)''\cong pA_0''p$ is still an interpolated free group factor. Note also that
$B_1$ has rapid decay, by Theorem \ref{thm: RDP for free products}, and so $pA_0p$ has rapid decay, by Proposition \ref{RDpreservation}. We conclude  by  Lemma
\ref{sumsfreeprod} that
$pCp\cong \bC^2*((pA_0p)\oplus \bC^{k'})$ is selfless, as desired.
 
Suppose now that $m+n=5$, i.e., $m=3$ and $n=2$. In this case,
\[
B_1=\Big(\stacked{\bC}{p}{\alpha_1+\alpha_2}\oplus 
\stacked{\bC}{p_3}{\alpha_3}\Big)*
\Big(\stacked{\bC}{q_1}{\beta_1}\oplus \stacked{\bC}{q_2}{\beta_2}\Big).
\]
If  $\alpha_1+\alpha_2+\beta_j=1$ for some $j$, then  $\alpha_3=\beta_j$. But this implies that  $\alpha_3+(1-\beta_j)=1$, which violates
\eqref{iffcondition}. Thus, $\alpha_{1}+\alpha_{2}+\beta_j\neq 1$
for $j=0,1$. It follows by the structure theorem for reduced free products of the form $\bC^2*\bC^2$ \cite[Proposition 2.7]{Dykemasimplicitystablerankonefree},
that
\[
B_1\cong M_2(C[a,b])\oplus \bC^k,
\]
where $k\leq 2$, and where the trace $\tau_{B_1}$ on the summand $M_2(C[a,b])$ is given by an atomless
measure with support $[a,b]$. We note that in this isomorphism the component of the projection $p$ in the summand $M_2(C[a,b])$ is a projection of constant rank one. We deduce that
 \[
pB_1p\cong C[a,b]\oplus \bC^{k'}.
 \]
Note that the GNS completion of $C[a,b]$ relative to  the  trace $\frac{1}{\tau_{B_1}(p)}\tau_{B_1}$   is 
a diffuse abelian von Neumann algebra. Note also 
that $C[a,b]$ has rapid decay relative to a suitable filtration, by Example \ref{ex: any diffuse measure on interval}.
Thus, as in the previous case, we conclude that $pCp$ is selfless through an application of Lemma \ref{sumsfreeprod}. 
\end{proof}

\end{document}